\theoremstyle{plain}
\newtheorem*{theorem*}{Theorem}
\newtheorem{theorem}{Theorem}[section]
\newtheorem{lemma}[theorem]{Lemma}
\newtheorem{corollary}[theorem]{Corollary}
\newtheorem{proposition}[theorem]{Proposition}
\theoremstyle{definition}
\newtheorem{definition}[theorem]{Definition}
\newtheorem{remark}[theorem]{Remark}
\newtheorem{example}[theorem]{Example}
\newcommand{\C}{\mathbb{C}}
\newcommand{\Z}{\mathbb{ Z}}
\newcommand{\mO}{\mathbb{O}}
\renewcommand{\P}{\mathbb{ P}}
\newcommand{\la}{\lambda}
\newcommand{\cB}{\mathcal{B}}
\newcommand{\cC}{\mathcal{C}}
\newcommand{\cF}{\mathcal{F}}
\newcommand{\cN}{\mathcal{N}}
\newcommand{\cO}{\mathcal{O}}
\newcommand{\cQ}{\mathcal{Q}}
\newcommand{\cS}{\mathcal{S}}
\DeclareMathOperator{\End}{End}
\DeclareMathOperator{\Gl}{GL}
\DeclareMathOperator{\im}{im}
\DeclareMathOperator{\Type}{Type}
\DeclareMathOperator{\eType}{eType}
\DeclareMathOperator{\Sp}{Sp}
\DeclareMathOperator{\Lie}{Lie}
\newcommand{\gl}{\mathfrak{gl}}
\newcommand{\fN}{\mathfrak{N}}
\newcommand{\fZ}{\mathfrak{Z}}
\newcommand{\spp}{\mathfrak{sp}}
\newcommand{\SYB}{SYB}
\newcommand{\ovm}{\overline{m}}
\begin{document}
\title{Irreducible Components of Exotic Springer Fibres}

\author{Vinoth Nandakumar}
\address{V.~Nandakumar: School of Mathematics and Statistics, University of Sydney}
\email{vinoth.nandakumar@sydney.edu.au}

\author{Daniele Rosso}
\address{D.~Rosso: Department of Mathematics and Actuarial Science, Indiana University Northwest}
\email{drosso@iu.edu}

\author{Neil Saunders}
\address{N.~Saunders: Department of Mathematical Sciences, Old Royal Naval College,  University of Greenwich  and Honorary Associate at the School of Mathematics and Statistics, University of Sydney}
\email{n.saunders@greenwich.ac.uk}

\maketitle

\begin{abstract}
Kato introduced the exotic nilpotent cone to be a substitute for the ordinary nilpotent cone of type C with cleaner properties. Here we describe the irreducible components of exotic Springer fibres (the fibres of the resolution of the exotic nilpotent cone), and prove that they are naturally in bijection with standard bitableaux. As a result, we deduce the existence of an exotic Robinson-Schensted bijection, which is a variant of the type C Robinson-Schensted bijection between pairs of same-shape standard bitableaux and elements of the Weyl group; this bijection is described explicitly in the sequel to this paper. Note that this is in contrast with ordinary type C Springer fibres, where the parametrisation of irreducible components, and the resulting geometric Robinson-Schensted bijection, are more complicated. As an application, we explicitly describe the structure in the special cases where the irreducible components of the exotic Springer fibre have dimension 2, and show that in those cases one obtains Hirzebruch surfaces.\end{abstract}
\setcounter{tocdepth}{1}
\tableofcontents

\section{Introduction}
The Springer Correspondence gives a bijection between the irreducible representations of the Weyl group of a connected reductive algebraic group $G$, and certain pairs $(\cO, \varepsilon)$ comprising a $G$-orbit on the nilpotent cone $\mathcal{N} = \mathcal{N}(\mathfrak{g})$ of its Lie algebra $\mathfrak{g}$, and certain simple local systems $\varepsilon$ on $\cO$. In type $A$, the nilpotent cone $\cN(\gl_n)$ consists of all nilpotent $n \times n$ matrices, and the $G$-action (by conjugation) on $\mathcal{N}$ has connected stabilisers, so no non-trivial local systems occur in the Springer Correspondence. In this case, one has a bijection between the $G$-orbits on $\cN(\gl_n)$ and the irreducible representations of $S_n$, the Weyl group of $GL_n$, which are parametrised by partitions of $n$. 
\vspace{5pt}

The Springer Correspondence in type $C$, using the ordinary nilpotent cone, is more complicated than that in type $A$ for a number of reasons - the isotropy groups of the orbits are not connected, and one does not obtain a bijection between nilpotent orbits and irreducible representations of the Weyl group of type $C$. Kato introduced the exotic nilpotent cone $\mathfrak{N}$ to evade these complications, and Kato's exotic Springer correspondence, as constructed in \cite{kato2}, does give such a bijection. This relies on the fact that the $Sp_{2n}(\mathbb{C})$-orbits on $\mathfrak{N}$ are proven to be in bijection with $\mathcal{Q}_n$, the set of bipartitions of $n$ (and thus also with irreducible representations of the type $C$ Weyl group). \vspace{5pt} 

Exotic Springer fibres have also been used by Kato for various  geometric constructions. In \cite{kat}, Kato studies representations of multi-parameter affine Hecke algebras, by using the exotic nilpotent cone (and the equivariant K-theory of its Steinberg variety, following techniques used by Kazhdan, Lusztig and Ginzburg in the case of one-parameter affine Hecke algebras). In particular, the standard modules for these Hecke algebras are realised via the total homology of exotic Springer fibres and, as is the case for the classical Springer Correspondence, the top homology gives the irreducible representation of the Weyl group). In Corollary 1.24 of \cite{katd} the authors establish a connection between the homology of exotic Springer fibres, and of ordinary Springer fibres in types $B$ and $C$ (see also \cite{kato4}, for a purity result).  \vspace{5pt}

Following Kato's foundational papers on the exotic nilpotent cone, there has been subsequent work extending various results about the nilpotent cone to the exotic setting. In \cite{AH}, Achar and Henderson conjecturally describe the intersection cohomology of orbit closures in the exotic nilpotent cone; these have since been proven independently by Shoji-Sorlin (see Theorem $5.7$ in \cite{ss}); and by Kato (see Theorem A in \cite{kato3}, Theorem A.$1.8$ in \cite{kato4} and Remark $5.8$ in \cite{ss}). Achar, Henderson and Sommers make an explicit connection between special pieces for $\mathfrak{N}$ and those for the ordinary nilpotent cone in \cite{special}. The Lusztig-Vogan bijection can also be extended to the exotic nilpotent cone, as shown by the first author in \cite{vn}. These results all demonstrate a strong connection between the exotic nilpotent cone and the ordinary nilpotent cone of type $C$. In some respects, the former has properties which are better than those of the latter; the present work is another example of this. \vspace{5pt} 

The main result of this paper is a  description and a combinatorial enumeration of the irreducible components of exotic Springer fibres. In \cite{Spa}, Spaltenstein gives an explicit bijection between the irreducible components of Springer fibres in type $A$ and standard tableaux of the corresponding shape. Using his techniques, we will show that the irreducible components of exotic Springer fibres are in bijection with standard Young bitableaux, and explicitly describe an open, dense subset within each component. The fact that the cardinality of these two sets is equal follows from Kato's constructions in \cite{kat} and \cite{kato2} for the following reason: the top homology of the exotic Springer fibre has a basis given by the classes of the irreducible components, but it also carries an action of the Weyl group, and is isomorphic to the corresponding irreducible representation. Our proof of this fact has the advantage that it gives an explicit bijection, and is more elementary. \vspace{5pt} 

The irreducible components of Springer fibres in type $B$, $C$ and $D$ were first described by Spaltenstein in Section II.6 of \cite{irredspa}, but the combinatorics is quite subtle and involves signed domino tableaux (see also Section $3$ of van Leeuwen's thesis \cite{leeuwen} for an exposition, and also the simplified version given by Pietraho in \cite{pietraho}). In \cite{Ste}, Steinberg constructs a geometric Robinson-Schensted correspondence by looking at the irreducible components of the Steinberg variety in two different ways. In type A, this coincides with the classical RS correspondence defined combinatorially using the row bumping algorithm (see \cite{Ste}), but in types B, C and D, it is quite different, and was computed by van Leeuwen in \cite{leeuwen}. In the exotic setup, we obtain a variant of the Robinson-Schensted Correspondence in type C.  We give a combinatorial description in \cite{NRS17}, the sequel to this paper, and this is discussed briefly in Section \ref{sec:exoRS}. Note that this is different from the exotic Robinson-Schensted correspondence constructed by Henderson and Trapa in \cite{HT}. \vspace{5pt}

Let us now briefly summarize our paper; see Sections \ref{sec:notation} and \ref{sec:strategy} for more detail. The exotic nilpotent cone $\mathfrak{N}$ is the Hilbert nullcone of the $\text{Sp}_{2n}(\mathbb{C})$ representation $\mathbb{C}^{2n} \oplus \Lambda^{2}(\mathbb{C}^{2n})$. Like the ordinary case, the exotic nilpotent cone has a natural resolution $\pi: \widetilde{\mathfrak{N}} \rightarrow \mathfrak{N}$, with $\widetilde{\mathfrak{N}}$ being a vector bundle over the symplectic flag variety. Given $(v, x) \in \mathfrak{N}$, the exotic Springer fibre $\mathcal{C}_{(v,x)} = \pi^{-1}(v,x)$. We have: 
$$
\cC_{(v,x)}	= \{(0 \subset F_1 \subset \cdots \subset F_{2n-1} \subset \mathbb{C}^{2n}) \; | \;~F_i^\perp=F_{2n-i}, \text{dim}(F_i) =i, ~v\in F_n,~x(F_i)\subseteq F_{i-1}\}.
$$
Given $(v, x) \in \mathfrak{N}$ and a bipartition $(\mu, \nu) \in \mathcal{Q}_n$, define the {\it exotic type} $\text{eType}(v,x) = (\mu, \nu)$ if $(v,x)$ lies in the $\Sp_{2n}(\mathbb{C})$-orbit indexed by $(\mu,\nu)$. Let $T$ be a {\it standard Young bitableau} (see Example \ref{example:bitableau} for a complete definition). Define $\mathcal{C}_{(v,x)}^{T, \circ} \subset \mathcal{C}_{(v,x)}$ to be the subset of all flags $(F_{\bullet})$, such that for each $1 \leq i \leq n$, $\text{eType}(v+F_{n-i}, F_{n-i}^{\perp} / F_{n-i})$ is the bitableau obtained by deleting all entries of $T$ which are larger than $i$. Our main theorem states that the irreducible components of the exotic Springer fibre $\mathcal{C}_{(v,x)}$ are equidimensional, and are given by the closures $\overline{\mathcal{C}_{(v,x)}^{T, \circ}}$, as $T$ ranges over all standard bitableaux of shape $(\mu, \nu)$. It should be noted here that that fibre is not the union of the $\mathcal{C}_{(v,x)}^{T, \circ}$ and that taking closures is necessary. \vspace{5pt} 

The proof is inductive, and relies on analysing the projection map $p: \mathcal{C}_{(v,x)}^T \rightarrow \P(\ker(x))$, given by  $p(F_{\bullet}) = F_1$. The fibres of this projection map are easily seen to be either empty, or isomorphic to $\mathcal{C}_{(v+F_1, x|_{F_1^{\perp}/F_1})}^{T'}$, where $T'$ is obtained from $T$ by removing the box labelled $n$. The bulk of the proof lies with Proposition \ref{prop:B-irred}, where we describe the image of the map $p$; the main theorem essentially follows once we know the dimension of the image, and its irreducibility (since the total space of a fibre bundle with irreducible base and fibres is also irreducible). This strategy is roughly the same as that used by Spaltenstein for type A Springer fibres in \cite{Spa}; in that case, instead of the exotic type one simply looks at the Jordan type of the nilpotent $x$ on the subspace $F_i$, and understanding the image of the map $p$ is much easier. \vspace{5pt}

As an application, in Section \ref{sec:dimension2} we study the special case where the exotic Springer fibre has dimension two. Using our main theorem, we show that in this case the irreducible components are $\mathbb{P}^1$-bundles over $\mathbb{P}^1$, and classify them.  \vspace{5pt}

\subsection{Acknoledgements}
We are very much indebted to Anthony Henderson for many helpful suggestions, and N.S. would particularly like to thank him for supporting a research visit to Sydney where part of this work was carried out. We would also like to thank Syu Kato for a useful conversation on this subject. D.R. and V.N. are grateful to the Centre de Recherches Math\'ematiques and the Fields Institute for sponsoring the workshop on Infinite Dimensional Lie Theory: Algebra, Geometry and Combinatorics, where they discussed these ideas. N.S thanks A. Wilbert for useful discussions regarding the material in Section \ref{sec:dimension2}. N.S. is also grateful to the Heilbronn Institute for Mathematical Research, l'\'Ecole Polytechnique F\'er\'erale de Lausanne and Donna Testerman for supporting this research. V.N. is grateful to the University of Utah (in particular, Peter Trapa), and the University of Sydney (in particular, Ruibin Zhang and Gus Lehrer) for supporting this research. Finally, we thank the anonymous referee for pointing out a weakness in the proof of our main theorem in a previous draft.

\section{Background and Notation}\label{sec:notation}

We let $U$ be an $n$-dimensional vector space over $\C$ and we let $V=U \oplus U^{*}$. We endow $V$ with a symplectic (i.e. nondegenerate, skew-symmetric, bilinear) form thus: $$\langle (u,f),(u',f') \rangle:=f'(u)-f(u'),\text{ for }u,u' \in U\text{ and }f,f' \in U^{*}.$$
If $W\subseteq V$, we denote by $W^\perp$ its perpendicular space with respect to the form $\langle ~,~\rangle$. \vspace{5pt}

We define the symplectic group as the group of invertible linear transformations of $V$ preserving the form
$$
\Sp_{2n}=\Sp(V):=\{g\in \Gl(V)~|~\langle gv, gw \rangle = \langle v,w\rangle, ~\forall v,w\in V\}
$$
and we identify its Lie algebra as follows
$$
\spp_{2n}:=\Lie(\Sp_{2n})=\{x\in \End(V)~|~\langle xv,w\rangle + \langle v,xw\rangle =0, ~\forall v,w\in V\}.
$$
The adjoint action of $\Sp_{2n}$ on $\spp_{2n}$ is the restriction of the $\Sp_{2n}$-action on $\gl_{2n}=\End(V)$ given by conjugation. This action gives a direct sum decomposition of $\Sp_{2n}$-modules
$\gl_{2n}=\spp_{2n}\oplus \cS.$ We can also describe $\cS$ directly as
$$\cS=\{x\in \End(V)~|~\langle xv,w\rangle -\langle v,xw\rangle =0, ~\forall v,w\in V\}.$$
An easy fact following directly from this definition is that $\langle x^{i}v,x^{j}v \rangle =0$ for all $v \in V$ and all $i,j \geq 0$. 
Finally we let $\cN(\mathfrak{gl}_{2n}):=\{x\in\End(V)~|~x^k=0,\text{ for some }k\}$ be the \emph{nilpotent cone} of $GL_{2n}$.

\begin{definition}[(Exotic Nilpotent Cone)]\label{def:ex-nilcone}
The \emph{exotic nilpotent cone} is the (singular) variety $\mathfrak{N}:=V \times (\mathcal{S} \cap \mathcal{N}(\mathfrak{gl}_{2n}))$.
\end{definition}

We denote by $\cF(V)$ the variety of \emph{complete symplectic flags} in $V$, that is $F_\bullet\in\cF(V)$ is a sequence of subspaces
$$ F_\bullet=(0=F_0\subseteq F_1\subseteq \cdots F_n\subseteq \cdots \subseteq F_{2n-1}\subseteq F_{2n}=V)$$
such that $\dim(F_i)=i$ and $F_i^\perp=F_{2n-i}$.  \vspace{5pt}

There is a resolution of singularities
\begin{equation}\label{eq:resol-nilcone} \pi:\widetilde{\fN}\twoheadrightarrow \fN\end{equation}
where 
$$\widetilde{\mathfrak{N}}=\{(F_\bullet,(v,x))\in\cF(V)\times\fN~|~v\in F_n,~x(F_i)\subseteq F_{i-1}~\forall i=0,\ldots,2n\} $$
given by the projection
$$ \pi(F_\bullet,(v,x))=(v,x).$$
Notice that the map $\pi$ is equivariant for the natural diagonal action of $\Sp_{2n}$ on both $\widetilde{\fN}$ and $\fN$ with the explicit action on $\widetilde{\fN}$ given by $g\cdot (F_\bullet,v,x)=(gF_\bullet,gv,gxg^{-1})$.
\vspace{5pt}

A \emph{partition} of $n$ is a sequence $\lambda=(\lambda_1,\ldots,\lambda_k)$ of nonnegative integers with $\lambda_1\geq\ldots\geq\lambda_k$ and $\sum\limits_{i=1}^k\lambda_i=n$. We denote it by $\lambda \vdash n$ or $|\lambda|=n$. The \emph{length} of a partition $\lambda$, denoted by $\ell(\lambda)$ is the number of nonzero parts of $\lambda$. If we have two partitions $\mu$, $\nu$, we can define new partitions $\mu+\nu=(\mu_1+\nu_1,\ldots,\mu_s+\nu_s)$, and $\mu\cup\nu$ which is the unique partition obtained by reordering the sequence $(\mu_1,\ldots,\mu_r,\nu_1,\ldots,\nu_s)$ to make it nondecreasing. A \emph{bipartition} of $n$ is a pair $(\mu,\nu)$ of partitions such that $|\mu|+|\nu|=n$. We denote the set of all bipartitions of $n$ by $\cQ_n$. The set $\cQ_n$ is important for us because of the following result.

\begin{theorem}[{\cite[Thm 6.1]{AH}}] 
The orbits of $\Sp_{2n}$ on $\fN$ are in bijection with $\cQ_n$.
\end{theorem}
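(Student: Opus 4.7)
My plan is to construct the bijection explicitly in two stages: first classify $\Sp_{2n}$-orbits on the nilpotent component $\cS \cap \cN(\gl_{2n})$ (the $v=0$ locus), and then refine by the choice of vector $v$.

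For the first stage, write $x \in \cS$ in block form with respect to the polarization $V = U \oplus U^*$. A short computation from $x^*=x$ shows that $x = \begin{pmatrix} A & B \\ C & A^T \end{pmatrix}$ with $A \in \End(U)$ and with $B$, $C$ skew. Using $\Gl(U) \subset \Sp_{2n}$-translations, together with conjugations by unipotent symplectic elements that kill the off-diagonal skew blocks, one reduces any nilpotent $x \in \cS$ to block-diagonal form $\mathrm{diag}(A, A^T)$ with $A \in \cN(\gl(U))$. The Jordan type of such an $x$ on $V$ is $\mu \sqcup \mu$, where $\mu \vdash n$ is the Jordan type of $A$; the symmetry condition forces each part of $\lambda=\mu\sqcup\mu$ to appear with even multiplicity, which is consistent with the identity $\langle x^i v, x^j v\rangle = 0$ noted in the paper. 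Two such representatives are $\Sp_{2n}$-conjugate iff their $A$'s are $\Gl(U)$-conjugate, yielding a canonical representative $x_\mu$ for each $\mu \vdash n$.

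For the second stage, fix such an $x_\mu$ and study the action of the centralizer $Z_\mu = Z_{\Sp_{2n}}(x_\mu)$ on $V$; orbits on $\fN$ meeting $V \times \{x_\mu\}$ are in bijection with $Z_\mu$-orbits on $V$. Writing $Z_\mu$ concretely in block form relative to the Jordan decomposition of $x_\mu$, a sequence of elementary reductions brings $v$ to a normal form supported on a minimal collection of Jordan chains, parametrized by a second partition $\nu$. Intrinsic invariants detecting $\nu$ can be given by the sequence of dimensions $\dim\bigl(\ker(x^k) \cap \Span(v, xv, x^2 v, \ldots)\bigr)$, which are manifestly $\Sp_{2n}$-invariant; together with $\mu$ they recover the orbit. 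A dimension count forces $|\mu| + |\nu| = n$, so $(\mu, \nu) \in \cQ_n$, and the normal form provides a canonical representative for each bipartition, establishing the bijection.

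The main obstacle will be the careful normal-form reduction in the second stage: one must control the $Z_\mu$-action on many isotypic pieces of the Jordan decomposition simultaneously, subject to the symplectic compatibility conditions. A clean way to organize this is via a mirabolic-type quiver reformulation, in the spirit of Travkin's work in type A, encoding $(v, x)$ as a representation of a small algebra to which the Krull--Schmidt theorem applies; the two natural families of indecomposables then correspond directly to the two components of a bipartition, making the combinatorics transparent. Alternatively, since the statement is attributed to Achar--Henderson, one may simply cite Theorem~6.1 of \cite{AH} and move on.
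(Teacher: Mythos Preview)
The paper does not prove this theorem: it is quoted as a black box from \cite{AH}, and the surrounding text only records the explicit orbit representatives (the ``normal basis'') from that reference. Your closing remark --- just cite Theorem~6.1 of \cite{AH} and move on --- is exactly what the paper does.

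Your sketched argument, however, has real gaps. First, the bookkeeping is inconsistent: in stage one you declare $\mu \vdash n$ to be the Jordan type of $A$ on $U$, so $|\mu|=n$; in stage two you then conclude $|\mu|+|\nu|=n$, which would force $\nu=\varnothing$. What actually happens is that stage one yields a partition $\lambda \vdash n$ (the paper's $\lambda = \mu+\nu$), and stage two yields a \emph{partwise} decomposition $\lambda = \mu + \nu$, whence $|\mu|+|\nu|=|\lambda|=n$. Second, your proposed separating invariant $\dim\bigl(\ker(x^k)\cap\C[x]v\bigr)$ is too coarse: $\C[x]v$ is a single Jordan string of length $\mu_1$, so this sequence equals $\min(k,\mu_1)$ and detects only the integer $\mu_1$, not a partition. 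The invariant that actually distinguishes the orbits, used later in the paper as Theorem~\ref{thm:T-AH}, is the Jordan type of $x$ on $V/\C[x]v$. Third, the claim that unipotent symplectic conjugation ``kills the off-diagonal skew blocks'' and brings every nilpotent $x\in\cS$ to $\diag(A,A^T)$ is precisely the nontrivial content of the $\Sp_{2n}$-classification of $\cS\cap\cN$; it is true, but your one line is not an argument.
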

More precisely, following Section 2 and Section 6 of \cite{AH} we can say that, given a bipartition $(\mu,\nu)\in\cQ_n$, the corresponding orbit $\mO_{(\mu,\nu)}$ contains the point $(v,x)$ if and only if there is a {\it normal basis} of $V$ given by 
$$\{v_{ij}, v_{ij}^{*} \, | \, 1 \leq i \leq \ell(\mu+\nu), 1 \leq j \leq (\mu+\nu)_{i}=\lambda_i \},
$$ with $\langle v_{ij},v^*_{i'j'}\rangle=\delta_{i,i'}\delta_{j,j'}$, $v=\sum\limits_{i=1}^{\ell(\mu)}v_{i,\mu_i}$
and such that the action of $x$ on this basis is as follows: 
$$
xv_{ij}=\begin{cases} v_{i,j-1}  & \mbox{if} \quad j \geq 2 \\ 0  &\mbox{if} \quad j=1 \end{cases} \quad \quad \quad xv_{ij}^{*}=\begin{cases} v_{i,j+1}^{*}  & \mbox{if} \quad j \leq \mu_i+\nu_i-1 \\ 0  &\mbox{if} \quad  j=\mu_i+\nu_i. \end{cases}
$$ In particular the Jordan type of $x$ is $(\mu+\nu)\cup(\mu+\nu)$.

\begin{definition}\label{def:jordan}
If $x$ is a nilpotent transformation on a vector space $W$, we denote by $\Type(x,W)$ the Jordan type of $x$, which is a partition of $\dim(W)$.
If $(v,x)\in\mO_{(\mu,\nu)}$ as defined above, we say that the bipartition $(\mu,\nu)$ is the \emph{exotic Jordan type} of $(v,x)$ and we denote it by $\eType(v,x)=(\mu,\nu)$.
\end{definition}
Associated to a partition $\lambda$ there is a Young diagram consisting of $\lambda_i$ boxes on row $i$. We say that $\lambda$ is the \emph{shape} of the diagram. In the same way, a bipartition gives a pair of diagrams.

\begin{example} \label{example:boxes}
Consider the bipartition $(\mu,\nu)=((3,1),(2,2,1))$, and a point $(v,x)\in\mO_{(\mu,\nu)}$. Then we can represent the normal basis as

\begin{center}
\begin{tikzpicture}[scale=0.6]
\draw[-,line width=2pt] (0,4) to (0,-4);

\draw (-3,4) -- (2,4) -- (2,3) -- (-3,3) -- cycle;
\draw (-2,4) -- (-2,3);
\draw (-1,4) -- (-1,2) -- (2,2) -- (2,3);
\draw (1,4) -- (1,1) -- (0,1);

\node at (-2.5,3.5) {$v_{11}$};
\node at (-1.5,3.5) {$v_{12}$};
\node at (-0.5,3.5) {$v_{13}$};
\node at (0.5,3.5) {$v_{14}$};
\node at (1.5,3.5) {$v_{15}$};

\node at (-0.5,2.5) {$v_{21}$};
\node at (0.5,2.5) {$v_{22}$};
\node at (1.5,2.5) {$v_{23}$};

\node at (0.5,1.5) {$v_{31}$};


\draw (-3,-4) -- (2,-4) -- (2,-3) -- (-3,-3) -- cycle;
\draw (-2,-4) -- (-2,-3);
\draw (-1,-4) -- (-1,-2) -- (2,-2) -- (2,-3);
\draw (1,-4) -- (1,-1) -- (0,-1);

\node at (-2.5,-3.5) {$v_{15}^*$};
\node at (-1.5,-3.5) {$v_{14}^*$};
\node at (-0.5,-3.5) {$v_{13}^*$};
\node at (0.5,-3.5) {$v_{12}^*$};
\node at (1.5,-3.5) {$v_{11}^*$};

\node at (-0.5,-2.5) {$v_{23}^*$};
\node at (0.5,-2.5) {$v_{22}^*$};
\node at (1.5,-2.5) {$v_{21}^*$};

\node at (0.5,-1.5) {$v_{31}^*$};
\end{tikzpicture}
\end{center}
with the action of $x$ given by moving one block left (and zero if there is nothing further left), and $v=v_{13}+v_{21}$ is given by the sum of the boxes just left of the dividing wall on the upper half of the diagram. Notice that the wall divides the two diagrams corresponding to the partitions that form the bipartition (the one on the left of the wall is facing backwards) and that the bipartition is repeated twice.
\end{example}
Given a Young diagram, we can fill the boxes with positive integers to obtain a Young tableau. We call a Young tableau, of shape $\lambda\vdash n$, \emph{standard} if it contains the integers $1,\ldots, n$ and it is strictly increasing along rows and down columns. For a bipartition $(\mu^1,\mu^2)$, a \emph{bitableau} of shape $(\mu^1,\mu^2)$ is a pair $(T^1,T^2)$ where $T^i$ is a tableau of shape $\mu^i$, $i=1,2$. A bitableau of shape $(\mu,\nu)$ is standard if it contains the integers $1,2,\ldots,|\mu|+|\nu|$ and each tableau is increasing along rows and down columns. To match the conventions used in \cite{AH}, we will actually reverse left-to-right the first tableau of a bitableau so that both tableaux increase as they get farther from the centre. We denote the set of all {\it standard bitableaux }of shape $(\mu,\nu)$ by $\SYB(\mu,\nu)$.

\begin{example} \label{example:bitableau}
The following is a standard bitableau of shape $((3,1),(2,2,1))$.
$$ \left( \young(841,::3),\young(25,69,7)\right)$$
\end{example}
\begin{remark}
A standard bitableau of shape $(\mu,\nu)$ is the same thing as a \emph{nested} sequence of bipartitions ending at $(\mu,\nu)$ i.e. a sequence of bipartitions
$$(\varnothing, \varnothing), (\mu^{(1)}, \nu^{(1)}), \ldots, (\mu^{(s)},\nu^{(s)})=(\mu,\nu)$$
such that $(\mu^{(i+1)},\nu^{(i+1)})$ is obtained from $(\mu^{(i)},\nu^{(i)})$ by adding one box (increasing one of the parts by one) either on the left or on the right tableau. The identification is given by tracing the order in which the boxes are added according to the increasing sequence of numbers $1,2,\ldots,|\mu|+|\nu|$.
\end{remark}
\begin{example}
The standard bitableau $\left(\young(32,:5),\young(1,4)\right)$ corresponds to the nested sequence
$$ \left(\varnothing,\varnothing\right),\left(\varnothing,\yng(1)\right),\left(\yng(1),\yng(1)\right),\left(\yng(2),\yng(1)\right),\left(\yng(2),\yng(1,1)\right),\left(\young(\hfil\hfil,:\hfil),\yng(1,1)\right).$$
\end{example}
\begin{definition}
Given $(v,x)\in\fN$, we consider the \emph{exotic Springer fibre}
$$\cC_{(v,x)}:=\pi^{-1}(v,x)=\{F_\bullet=(0\subseteq F_1\subseteq\cdots\subseteq F_{2n}=V~|~F_i^\perp=F_{2n-i},~v\in F_n,~x(F_i)\subseteq F_{i-1}\}.$$
It is clear that if $\eType(v,x)=\eType(v',x')$, then there is an isomorphism of varieties $\cC_{(v,x)}\simeq\cC_{(v',x')}$ given by the $\Sp_{2n}$-action. 
\end{definition}
Let $(v,x)\in\fN$, if $F_\bullet\in\cC_{(v,x)}$, for all $i=0,\ldots,n$, since $F_i$ and $F_i^\perp=F_{2n-i}$ are invariant under $x$, we can consider the restriction of $x$ to $F_i^\perp / F_i$, which is a vector space of dimension $2(n-i)$. Furthermore, it can be easily verified that $x\in\cN( F_i^\perp / F_i)\cap\cS (F_i^\perp/F_i)$.

\begin{definition}\label{def:Phi}
Let $(v,x)\in\fN$, $\eType(v,x)=(\mu,\nu)$, and let $T\in\SYB(\mu,\nu)$.
Define the following maps, and the subsets $\mathcal{C}_{(v,x)}^{T} \subset \mathcal{C}_{(v,x)}$: 
\begin{align*} 
\Phi^i:\cC_{(v,x)}\to \cQ_{n-i} &\qquad F_\bullet \mapsto \eType\left(v+F_i, x|_{F_i^\perp/F_i}\right); \\ 
\Phi:\cC_{(v,x)}\to \prod_{i=0}^n\cQ_i &\qquad F_\bullet\mapsto \left(\Phi^n(F_\bullet),\ldots,\Phi^0(F_\bullet)\right); \\
\mathcal{C}_{(v,x)}^{T} &:= \overline{\Phi^{-1}(T)}. 
\end{align*} 
\end{definition} 
\begin{definition}\label{def:bdim}
Let $(\mu,\nu)$ be a bipartition (with $\mu + \nu = \lambda)$, define 
$$b(\mu,\nu):=2N(\lambda)+|\nu|, \text{ where } N(\lambda)=\sum_{i\geq1}(i-1)\lambda_i $$ 
\end{definition}
\begin{remark}
The quantity $N(\lambda)$ is the dimension of the irreducible components of the Springer fibre in type A corresponding to the partition $\lambda$.
\end{remark} Our main theorem is the following: 
\begin{theorem}\label{mainthm}
Let $(v,x)\in\fN$, with $\eType(v,x)=(\mu,\nu)$, where $(\mu,\nu)\in\cQ_n$. Then the irreducible components of $\cC_{(v,x)}=\pi^{-1}(v,x)$ are precisely:
$$\{ \cC_{(v,x)}^T ~|~T\in\SYB(\mu,\nu)\}.$$ They all have the same dimension $b(\mu, \nu)$. \end{theorem}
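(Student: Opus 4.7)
The plan is to induct on $n = |\mu| + |\nu|$, with trivial base case. For the inductive step, fix $T \in \SYB(\mu, \nu)$, let $T'$ be the standard bitableau of shape $(\mu', \nu')$ obtained from $T$ by deleting the box labelled $n$, and consider the projection
\[
p \colon \cC_{(v,x)}^{T, \circ} \longrightarrow \P(\ker x), \qquad F_\bullet \longmapsto F_1.
\]
The image lies in $\P(\ker x)$ because $x(F_1) \subseteq F_0 = 0$. For any line $L \in \P(\ker x)$, the quotient $L^\perp/L$ is a symplectic space of dimension $2(n-1)$, $x$ descends to $\bar x \in \cS \cap \cN$ on it, and $\bar v := v + L$ lies in $L^\perp/L$ (using that $v \in F_n \subseteq F_n^\perp \subseteq L^\perp$ since $F_n$ is Lagrangian and $L = F_1 \subseteq F_n$). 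A direct verification, using $(F_i/L)^\perp = F_i^\perp/L$ together with the shifted-flag identification $\bar F_{i-1} := F_i/L$, shows that $\Phi^{i-1}(\bar F_\bullet) = \Phi^i(F_\bullet)$ for every $i \geq 1$; hence the fibre $p^{-1}(L)$ is naturally isomorphic to $\cC_{(\bar v, \bar x)}^{T', \circ}$.

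The crucial step, stated as Proposition \ref{prop:B-irred}, is to describe the image
\[
B := p(\cC_{(v,x)}^{T, \circ}) = \{ L \in \P(\ker x) \mid \eType(v + L, \bar x) = (\mu', \nu') \}
\]
and to prove that $B$ is irreducible of the expected dimension, namely $2(k-1)$ when the box labelled $n$ in $T$ lies in row $k$ of $\mu$, and $2k - 1$ when it lies in row $k$ of $\nu$. The strategy is to fix an explicit normal basis for $(v, x)$ as in Section \ref{sec:notation}, write a line $L = \C \cdot w$ with $w \in \ker x$ in coordinates, and compute $\eType(\bar v, \bar x)$ case by case in terms of these coordinates. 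I expect this to be the main obstacle: the behaviour of the exotic type under passage to $L^\perp/L$ is considerably more delicate than the Jordan-type calculation in Spaltenstein's type A argument, and one must both isolate the precise conditions on $w$ that force $(\bar v, \bar x)$ into the prescribed orbit, and verify that the resulting locus is irreducible of the claimed dimension rather than breaking into extra pieces coming from degenerate choices of $w$.

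Granting Proposition \ref{prop:B-irred}, the map $p$ realises $\cC_{(v,x)}^{T, \circ}$ as a fibration over the irreducible base $B$ with irreducible fibres $\cC_{(\bar v, \bar x)}^{T', \circ}$ of constant dimension $b(\mu', \nu')$ (by induction); standard arguments, using the transitive action of the stabiliser of $(v,x)$ in $\Sp_{2n}$ on each stratum of $\P(\ker x)$ determined by fixing $\eType(\bar v, \bar x)$, show that $p$ is locally trivial on $B$ and hence $\cC_{(v,x)}^{T,\circ}$ is irreducible. A direct computation using $b(\mu, \nu) = 2 N(\mu + \nu) + |\nu|$ and $N(\lambda) - N(\lambda - e_k) = k - 1$ yields $\dim B + b(\mu', \nu') = b(\mu, \nu)$ in both cases, so $\cC_{(v,x)}^T$ is irreducible of dimension $b(\mu, \nu)$. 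Distinct $T$ give distinct closures, since their generic points have distinct $\Phi$-values, producing $|\SYB(\mu, \nu)|$ irreducible closed subsets of $\cC_{(v,x)}$ of the common dimension $b(\mu, \nu)$. Combined with Kato's representation-theoretic computation of the top cohomology of $\cC_{(v,x)}$, which has rank $|\SYB(\mu, \nu)|$ and whose basis is indexed by the irreducible components, this count is tight, so the $\cC_{(v,x)}^T$ are precisely the irreducible components of $\cC_{(v,x)}$ and they are equidimensional.
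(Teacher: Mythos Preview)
Your inductive structure via the projection $p$ and the reduction to Proposition~\ref{prop:B-irred} is exactly the paper's Proposition~\ref{prop:nested}, including the identification of the fibre with $\cC_{(\bar v,\bar x)}^{T',\circ}$. The genuine difference is in the endgame. You close by invoking Kato's computation of $H_{\text{top}}(\cC_{(v,x)})$ to pin down the number of components; the paper instead introduces the exotic Steinberg variety $\fZ=\widetilde{\fN}\times_{\fN}\widetilde{\fN}$ and parametrises its irreducible components in two ways, once by $W(C_n)$ via Bruhat (Lemma~\ref{irrcomp}) and once by pairs $(S,T)\in\SYB(\mu,\nu)^2$ (Lemma~\ref{irrcomp2}). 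The identity $|W(C_n)|=\sum_{(\mu,\nu)}|\SYB(\mu,\nu)|^2$ then forces a bijection. This buys three things you do not get from the black-box appeal to Kato: the upper bound $\dim\cC_{(v,x)}\leq b(\mu,\nu)$ comes out of the Steinberg-variety dimension count (Lemma~\ref{dim}) rather than being imported; a hypothetical extra component $C$ is ruled out by noting it would produce an extra top-dimensional piece $\fZ_{\mu,\nu}^{C,C}\subset\fZ$, contradicting the count; and the matching of the two parametrisations of $\mathrm{Irr}(\fZ)$ yields the exotic Robinson--Schensted bijection of Corollary~\ref{cor:rsk} for free. Your route is shorter if one accepts Kato's results, but note that it needs not just the rank of the top homology but also equidimensionality (otherwise the count only bounds the number of \emph{top}-dimensional components); the paper itself still cites Kato's \cite{kat5} for this in Lemma~\ref{equidimensional}, so neither argument is fully self-contained on that point. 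One small correction: the target of $p$ should be $\P(\ker x\cap(\C[x]v)^\perp)$ rather than $\P(\ker x)$, since the flag conditions force $F_1\perp x^kv$ for all $k\geq 0$, and without $L\perp v$ the expression $\eType(v+L,\bar x)$ in your definition of $B$ is not well-posed.
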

In the case of the Springer fibres for groups of type A, the appropriate analogue of the map $\Phi$ from Definition \ref{def:Phi} gives an increasing sequence of partitions, which is equivalent to a standard tableau. Unfortunately in our case, it is not true in general that the image of $\Phi$ consists of nested sequences of bipartitions (i.e. standard bitableaux), as the following example shows.
\begin{example}\label{ex:notnested}
Take $n=2$, and consider the bipartition $(\mu,\nu)=((1,1),\varnothing)=\left(\yng(1,1),\varnothing\right)$. According to Definition \ref{def:jordan}, if $(v,x)\in\cC_{(\mu,\nu)}$, then $x=0$ and $v\neq 0$. Now, we let $F_1=\mathbb{C}v$, and we let $F_2$ be any $2$-dimensional subspace of $V$ such that $F_1\subseteq F_2\subseteq F_1^{\perp}$ and set $F_{\bullet}=(0 \subset F_1 \subset F_2 \subset F_{1}^{\perp} \subset \C^4)$. We then have $(F_\bullet,v,x)\in \mathcal{C}_{v,x}$. To compute the image under the map $\Phi$ the cases of $\Phi^0$ and $\Phi^2$ are trivial, so we need to consider what is the bipartition of $1$ given by $\Phi^1(F_\bullet)$. It is easy to see that $\eType\left(v+F_1,x|_{F_1^\perp/F_1}\right)=(\varnothing,1)$ because $v\in F_1$ so $v+F_1=0$ in $F_1^{\perp}/F_1$. Hence we get the sequence
$$(\varnothing,\varnothing), (\varnothing,(1)),((1,1),\varnothing) \qquad
\text{or equivalently} \qquad 
(\varnothing,\varnothing), \left(\varnothing,\yng(1)\right),\left(\yng(1,1),\varnothing\right)$$
which is not nested.
\end{example}

\section{Strategy of the Proof}\label{sec:strategy}
To prove our main theorem, which describes the irreducible components of the varieties $\mathcal{C}_{(v,x)}$, we will use some properties of the map $\Phi$ from Definition \ref{def:Phi}. 
We already know from Example \ref{ex:notnested} that the image of $\Phi$ does not consist excusively of standard bitableaux (i.e. nested sequences of bipartitions), but it does give standard bitableaux in most cases. In fact those are the only cases we need, as the following proposition tells us. The proof of the main theorem, which will be given in Section \ref{section:steinbergvariety}, will rely on the following fact. 
\begin{proposition} \label{prop:nested}
Let $(\mu,\nu)\in\cQ_n$, let $T$ be a standard bitableau of shape $(\mu,\nu)$ and let $(v,x)\in\mO_{(\mu,\nu)}$. Then $\Phi^{-1}(T)$ is an irreducible subvariety of $\cC_{(v,x)}$ of dimension $b(\mu,\nu)$. \end{proposition}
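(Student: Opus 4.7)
The plan is to induct on $n$, using the projection $p \colon \Phi^{-1}(T) \to \mathbb{P}(\ker(x))$ defined by $p(F_\bullet) = F_1$; the image lies in $\mathbb{P}(\ker(x))$ since $xF_1 \subseteq F_0 = 0$. Write $T'$ for the standard bitableau obtained from $T$ by deleting the box labelled $n$, and let $(\mu'', \nu'')$ denote its shape, so $(\mu'', \nu'') = (\mu', \nu)$ or $(\mu, \nu')$ depending on whether the deleted box lies in row $k$ of the first or second tableau. The proof then breaks into three pieces: (i) describing the image $B := p(\Phi^{-1}(T))$, (ii) identifying the fibres of $p$, and (iii) combining irreducibility and dimension.

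For (ii), given $F_1 \in B$ set $\bar V := F_1^\perp/F_1$ and $(\bar v, \bar x) := (v + F_1, x|_{\bar V})$; then $\bar V$ is a symplectic space of dimension $2(n-1)$ carrying an exotic nilpotent datum, and the assignment $F_i \mapsto F_i / F_1$ yields a bijection between complete symplectic flags on $V$ refining $F_1 \subset F_1^\perp$ and complete symplectic flags on $\bar V$. A direct check shows that $F_i^\perp/F_i$ in $V$ is naturally identified with $(F_i/F_1)^\perp/(F_i/F_1)$ in $\bar V$, so the nested exotic type conditions defining $\Phi^{-1}(T)$ translate exactly to the analogous conditions defining $\bar\Phi^{-1}(T')$ inside $\cC_{(\bar v, \bar x)}$, where $\bar\Phi$ is the map of Definition \ref{def:Phi} for the smaller data. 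Hence $p^{-1}(F_1) \cong \bar\Phi^{-1}(T')$, which by the inductive hypothesis is irreducible of dimension $b(\mu'', \nu'')$. For (i), the locus $B$ is precisely the set of lines $F_1 \in \mathbb{P}(\ker(x))$ for which $\eType(v+F_1, x|_{F_1^\perp/F_1}) = (\mu'', \nu'')$, since the deeper exotic type conditions can always be lifted via (ii) once this top condition holds. Showing that $B$ is an irreducible constructible subset of $\mathbb{P}(\ker(x))$ of the expected dimension $d$ -- namely $d = 2(k-1)$ if box $n$ lies in $\mu$, and $d = 2(k-1)+1$ if it lies in $\nu$ -- is exactly the content of Proposition \ref{prop:B-irred}, and is the main obstacle: as Example \ref{ex:notnested} illustrates, the quotient exotic type depends subtly on how $F_1$ interacts with both the Jordan structure of $x$ and with $v$, and not every line in $\ker(x)$ even produces a nested sequence of bipartitions.

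To conclude, I would argue that $p \colon \Phi^{-1}(T) \to B$ is Zariski-locally trivial: over a small open neighbourhood of any $F_1 \in B$ one can trivialize the family of quotients $(\bar V, \bar v, \bar x)$ using the transitivity of the $\Sp_{2n}$-action on tuples of a fixed exotic type together with a local section. Irreducibility of $B$, irreducibility of the fibre, and local triviality then imply that $\Phi^{-1}(T)$ is irreducible of dimension $\dim B + b(\mu'', \nu'')$. A short check using Definition \ref{def:bdim} shows this sum equals $b(\mu, \nu)$: removing a box from row $k$ of $\lambda = \mu + \nu$ decreases $N(\lambda)$ by $k-1$, producing a dimension drop of $2(k-1)$ or $2(k-1)+1$ depending on whether $|\nu|$ is also reduced, matching $d$ in each case and closing the induction.
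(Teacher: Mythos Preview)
Your proof is correct and follows essentially the same inductive strategy as the paper: project $\Phi^{-1}(T)$ to the space of admissible lines $F_1$, identify each fibre with $\bar\Phi^{-1}(T')$ via the quotient $F_1^\perp/F_1$, and defer the irreducibility and dimension of the image to Proposition~\ref{prop:B-irred}. The only cosmetic difference is that the paper records the codomain of $p$ as $\mathbb{P}(\ker(x)\cap(\mathbb{C}[x]v)^\perp)$ rather than $\mathbb{P}(\ker(x))$; this constraint is automatic for flags in $\cC_{(v,x)}$ (since $\mathbb{C}[x]v\subseteq F_n\subseteq F_{2n-1}=F_1^\perp$), so your formulation is equivalent.
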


\begin{proof}
We will prove this proposition by induction on $n$. For $n=0$, the statement is trivial (all the varieties involved are one single point), so now assume that $n\geq 1$.
Fix $(\mu,\nu)$, $T$ and $(v,x)$ as in the statement. We consider the projection
\begin{eqnarray*}
p:\Phi^{-1}(T) 	&\longrightarrow& \P(\ker(x) \cap (\C[x]v)^{\perp}); \\
	F_\bullet 	&\mapsto& F_1.
\end{eqnarray*}
Let $T'$ be the standard bitableau obtained from $T$ by removing the box with the number $n$, and let $(\mu',\nu')\in\cQ_{n-1}$ be the shape of $T'$. Notice that $(\mu',\nu')=\Phi^1(F_\bullet)=\eType\left(v+F_1,x|_{F_1^\perp/F_1}\right)$. Observe that  
$$ \cB_{(\mu',\nu')}^{(\mu,\nu)}:=\im p=\{F_1 \subset \ker(x) \cap (\mathbb{C}[x]v)^{\perp}\, | \, \eType(v+F_1,x_{|F_{1}^{\perp}/F_1})=(\mu',\nu')\, \}. $$

For any $F_1\in\im p$, we then have that 
\begin{align}
\notag p^{-1}(F_1)&=\{F'_\bullet\in \Phi^{-1}(T)~|~F'_1=F_1\} \\
\label{eq:p-proj} &\stackrel{\alpha}{\simeq} \left\{\overline{F}_\bullet\in\cC_{\left(v+F_1,x_{|F_1^\perp/F_1}\right)}\subseteq\cF(F_1^\perp/F_1)~|~(\Phi^{n-1}(\overline{F}_\bullet),\ldots,\Phi^0(\overline{F}_\bullet))=T'\right\} 
\end{align}
where the isomorphism $\alpha$ is given by 
$$\alpha(F'_0,F'_1,\ldots,F'_{2n-1},F'_{2n})=(\overline{F}_0,\overline{F}_1,\ldots,\overline{F}_{2n-3},\overline{F}_{2n-2});\quad \overline{F}_i=F'_{i+1}/F_1\quad\forall i=0,\ldots,2n-2.$$
In conclusion, the map $p$ is a fibre bundle, and from \eqref{eq:p-proj}, it follows that the fibres are $p^{-1}(F_1)\simeq \Phi^{-1}(T')$, which by inductive hypothesis is an irreducible variety of dimension $b(\mu',\nu')$.
Thus the proof will be a consequence of the following result.
\end{proof}
\begin{proposition}\label{prop:B-irred} For all $(\mu,\nu)\in\cQ_n$ and for all $(\mu',\nu')\in\cQ_{n-1}$ such that $(\mu',\nu')$ is obtained from $(\mu,\nu)$ by removing one box,
the variety $ \mathcal{B}_{(\mu',\nu')}^{(\mu,\nu)}$ is irreducible of dimension $b(\mu,\nu)-b(\mu',\nu')$.
\end{proposition}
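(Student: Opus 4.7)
My plan is to fix $(v, x) \in \mO_{(\mu, \nu)}$ in normal form with the associated normal basis $\{v_{ij}, v^*_{ij}\}$ described in Section \ref{sec:notation}, and study $\cB_{(\mu', \nu')}^{(\mu, \nu)}$ in the resulting explicit coordinates. A direct calculation using the action of $x$ on the normal basis shows that $\ker(x)$ is spanned by $\{v_{i,1}\}_{i} \cup \{v^*_{i, \mu_i+\nu_i}\}_{i}$. Pairing a general element $w = \sum_i a_i v_{i,1} + \sum_j b_j v^*_{j,\mu_j+\nu_j}$ against the cyclic module $\mathbb{C}[x]v$ produces at most one nontrivial linear equation on the $v^*$-coefficients (coming from the degree-zero pairing $\langle w, v\rangle = 0$), so the ambient variety $\mathbb{P}(\ker(x) \cap (\mathbb{C}[x]v)^{\perp})$ is an explicit projective space whose dimension is computable in terms of $\ell(\mu)$ and $\ell(\nu)$.

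For each bipartition $(\mu', \nu')$ obtained from $(\mu, \nu)$ by removing one box --- either (i) from row $k$ of $\mu$ (requiring $\mu_k > \mu_{k+1}$) or (ii) from row $k$ of $\nu$ (requiring $\nu_k > \nu_{k+1}$) --- I would identify the stratum $\cB_{(\mu', \nu')}^{(\mu, \nu)}$ as the locus of lines $\mathbb{C} w$ such that the Jordan structure of $x|_{F_1^{\perp}/F_1}$ together with the position of $v + F_1$ within that structure matches the $(\mu', \nu')$-normal form. Computationally, this translates to: certain ``higher priority'' coefficients of $w$ must vanish (so that the Jordan block that shortens in the quotient is the one corresponding to row $k$, rather than an earlier row), while a ``leading'' coefficient (essentially the $a_k$-direction in case (i), the $b_k$-direction in case (ii)) must be nonzero. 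Each stratum is therefore recognised as the complement, inside the projectivization of one linear subspace of $\ker(x)\cap (\mathbb{C}[x]v)^\perp$, of the projectivization of a strictly smaller linear subspace. Such a locally closed subvariety is automatically irreducible, as it is an open subset of an irreducible projective space.

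The dimension then follows by counting free parameters in the linear subspace: it comes out to $2(k-1)$ in case (i) and $2k-1$ in case (ii), which matches $b(\mu, \nu) - b(\mu', \nu')$ after applying the formula $b(\mu, \nu) = 2N(\mu+\nu) + |\nu|$. The extra $+1$ in case (ii) is accounted for by the additional ``dual'' direction available when the box being removed lies in $\nu$, reflecting the asymmetric role of $\nu$ in the definition of $b$.

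The principal obstacle will be establishing the precise defining conditions for each stratum --- i.e., verifying that exactly the lines satisfying the identified linear and non-vanishing conditions produce the target exotic type $(\mu', \nu')$ on the quotient. This requires explicitly rewriting $F_1^{\perp}/F_1$ into its $(\mu', \nu')$-normal form, tracking how the $v_{i,j}$, the $v^*_{i,j}$, and in particular the vector $v$ transform under the quotient, and then identifying the new normal basis via an explicit change of coordinates. The edge cases that demand particular attention are those where adjacent rows of $(\mu, \nu)$ have equal length (so that the ``leading row'' is ambiguous and the filtration by linear subspaces must be chosen carefully), where $\mu_k = 1$ or $\nu_k = 1$ (so a row collapses entirely after box removal), and where both an $a$-coefficient and a $b$-coefficient could nominally affect the same row, requiring a finer analysis to tell the strata apart.
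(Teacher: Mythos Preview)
Your approach is essentially the paper's: fix a normal basis, write $F_1=\mathbb{C}v_1$ with $v_1=\sum_i\alpha_i v_{i,1}+\sum_i\beta_i v^*_{i,\lambda_i}$, and analyse the coefficient conditions that force $\eType(v+F_1,x|_{F_1^\perp/F_1})=(\mu',\nu')$. Your dimension counts $2(k-1)$ and $2k-1$ are correct and agree with the paper.

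However, your structural description of the strata is too optimistic and will not survive the computation. The conditions are \emph{not} of the form ``higher-priority coefficients vanish, one leading coefficient is nonzero,'' and the strata are \emph{not} in general the complement of $\mathbb{P}(W')$ inside $\mathbb{P}(W)$ for nested linear subspaces. Two phenomena cause this:
\begin{enumerate}
\item Determining $\Type(x,F_1^\perp/(\mathbb{C}[x]v+F_1))$ (which distinguishes removing a box from $\mu$ versus from $\nu$) requires checking whether certain vectors of the form $\sum_{i} v_{i,\mu_i+\nu_m}$ lie in $F_1^\perp$. This yields conditions like $\sum_{i\in\Delta_m,\,i\le m}\beta_i\neq 0$ (a sum, not a single coordinate) for the $\nu$-case, and the complementary equation for the $\mu$-case.
\item Determining $\Type(x,V/(\mathbb{C}[x]v+F_1))$ forces you to test whether $v_1\in\im(x^{k-1})+\mathbb{C}[x]v$ for various $k$. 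Because the relevant element of $\mathbb{C}[x]v$ is $\sum_i v_{i,\mu_i-\mu_m+1}$, this produces conditions of the form ``a consecutive block of $\alpha_i$'s are \emph{equal to one another}'' (and a block of $\beta_i$'s vanish), not simple vanishing conditions.
\end{enumerate}
As a result, several strata are only \emph{sandwiched}: one exhibits an open dense subset $Y$ of an irreducible projective variety and a proper closed subset $X\cup Z$ so that $Y\setminus(X\cup Z)\subset\cB^{(\mu,\nu)}_{(\mu',\nu')}\subset Y$. Irreducibility and the dimension then follow, but not from the clean ``$\mathbb{P}(W)\setminus\mathbb{P}(W')$'' picture you propose. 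The case split you will actually need is finer than $(\mathrm{i})/(\mathrm{ii})$: within the $\mu$-case one must separate $\overline{m}=1$, $\nu_{\overline m-1}>\nu_{\overline m}$, and $\nu_{\overline m-1}=\nu_{\overline m}$; within the $\nu$-case one must separate $\mu_{\overline m}=\mu_{\overline m+1}$ from $\mu_{\overline m}>\mu_{\overline m+1}$. Your final paragraph correctly anticipates that equal-length rows are the delicate point, but the mechanism (sums and equalities rather than vanishing) is something you should build in from the start.
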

The proof of Proposition \ref{prop:B-irred} will be given in Section \ref{sec:proof} of the paper.
The following result, which combines \cite[Thm 1 and Cor 1]{T} and \cite[Thm 6.1]{AH} will be very useful for us in this regard.
\begin{theorem}[Travkin, Achar-Henderson]\label{thm:T-AH} Let $(v,x)\in\fN(W)$, then
$\eType(v,x)=(\mu,\nu)$ if and only if 
\begin{eqnarray*}
\Type(x,W) 		&=& (\mu_1+\nu_1, \mu_1+\nu_1,\mu_2+\nu_2,\mu_2+\nu_2,\ldots) \quad \text{and} \\
\Type(x,W/(\C[x]v))	&=& (\mu_1+\nu_1, \mu_2+\nu_1,\mu_2+\nu_2,\mu_3+\nu_2,\ldots).
\end{eqnarray*}
\end{theorem}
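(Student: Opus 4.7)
The plan is to establish the biconditional in both directions, with the forward direction being a direct computation and the backward direction following by injectivity.

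For $(\Rightarrow)$, I would compute both Jordan types directly on the normal basis $\{v_{ij}, v_{ij}^*\}$ provided by the description of $\mO_{(\mu,\nu)}$ given just before Definition \ref{def:jordan}. The $x$-action organizes $V$ into $2\ell(\lambda)$ Jordan strings: for each row $i$, the chains $v_{i,1}, \ldots, v_{i,\lambda_i}$ and $v_{i,\lambda_i}^*, \ldots, v_{i,1}^*$ are each single Jordan blocks of size $\lambda_i := \mu_i + \nu_i$, yielding $\Type(x,W) = (\lambda_1, \lambda_1, \lambda_2, \lambda_2, \ldots)$ as claimed. Next, $x^k v = \sum_{i \,:\, \mu_i > k} v_{i, \mu_i-k}$ shows that $\C[x]v \subseteq \Span\{v_{ij}\}$ has dimension exactly $\mu_1$. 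The dual side $\Span\{v_{ij}^*\}$ is disjoint from $\C[x]v$ and survives intact in the quotient, contributing Jordan blocks of sizes $(\lambda_1, \lambda_2, \ldots)$. On the $v$-side, one can use $x^k v$ to eliminate $v_{1, \mu_1-k}$ in the quotient for $k=0,\ldots,\mu_1-1$, after which a direct computation of $\ker(x^j)$-dimensions on the residual space shows the $v$-side Jordan type is $(\mu_2+\nu_1, \mu_3+\nu_2, \ldots)$. Interleaving the two contributions gives the claimed partition $(\mu_1+\nu_1, \mu_2+\nu_1, \mu_2+\nu_2, \mu_3+\nu_2, \ldots)$.

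For $(\Leftarrow)$, the bijection between $\Sp_{2n}$-orbits on $\fN$ and $\cQ_n$ (\cite[Thm~6.1]{AH}, quoted earlier) reduces the claim to injectivity of the map sending $(\mu,\nu)\in\cQ_n$ to the pair of partitions computed in $(\Rightarrow)$: since both Jordan types are $\Sp_{2n}$-invariants of $(v,x)$, any element with the specified types must lie in the unique orbit realizing them. Injectivity is elementary: the first partition recovers each $\lambda_i = \mu_i+\nu_i$, and then reading off the second partition alongside the $\lambda_i$ recovers $\nu_1 = (\mu_2+\nu_1) - \mu_2 = (\mu_2+\nu_1) - (\lambda_2 - \nu_2)$, hence $\mu_1$, and so on inductively.

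The main obstacle is the explicit computation of $\Type(x, W/\C[x]v)$ on the $v$-side: the cyclic subspace $\C[x]v$ threads diagonally across the Jordan strings, since each $x^k v$ has nonzero components in every row $i$ with $\mu_i > k$, so the $x$-action on the quotient nontrivially mixes rows (e.g.\ $x\overline{v_{1,\mu_1+1}} = \overline{v_{1,\mu_1}} = -\sum_{i\geq 2}\overline{v_{i,\mu_i}}$). To handle this cleanly I would induct on $\ell(\mu)$, using $v$ itself as the pivot to quotient away the top of row $1$ and reduce to a configuration with $\ell(\mu)$ smaller, with base case $\ell(\mu)=1$ where $\C[x]v$ coincides with the initial segment of a single Jordan string and the quotient is transparent; alternatively one can filter by powers of $x$ and compare ranks of $x^j$ before and after quotienting, which gives the Jordan type of $W/\C[x]v$ via a rank-nullity bookkeeping that matches the claimed interlaced partition.
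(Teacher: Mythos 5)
The paper does not prove this statement at all --- it is imported verbatim as a combination of \cite[Thm 1 and Cor 1]{T} and \cite[Thm 6.1]{AH} --- so any self-contained argument is necessarily a different route, and yours is the natural one. Your forward direction is sound: the first Jordan type is immediate from the normal basis, and the decomposition $V/\C[x]v \simeq (U/\C[x]v)\oplus U^*$ into $x$-invariant summands reduces everything to showing $\Type(x,U/\C[x]v)=(\mu_2+\nu_1,\mu_3+\nu_2,\ldots)$, after which the interleaving with $(\lambda_1,\lambda_2,\ldots)$ from $U^*$ gives $\rho$. Be aware, though, that this one sub-claim is exactly \cite[Lemma 2.5]{AH}, which the paper quotes separately as Lemma \ref{lemma:AH}; it is the entire content of the second displayed equation, and in your write-up it is only sketched (two candidate strategies, neither executed). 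Either carry out the rank bookkeeping --- checking that $\dim\ker(x^j|_{U/\C[x]v})-\dim\ker(x^{j-1}|_{U/\C[x]v})$ equals the number of $i$ with $\mu_{i+1}+\nu_i\geq j$ --- or simply cite AH as the paper does. Your backward direction is correct in substance (the orbit classification plus injectivity of $(\mu,\nu)\mapsto(\lambda\cup\lambda,\rho)$), but the recursion as written is circular: $\nu_1=(\mu_2+\nu_1)-(\lambda_2-\nu_2)$ still involves $\nu_2$. The clean version runs from the tail of the partitions: $\nu_i-\nu_{i+1}=\rho_{2i}-\lambda_{i+1}$, and since $\nu_k=0$ for $k$ large one telescopes to $\nu_i=\sum_{j\geq i}(\rho_{2j}-\lambda_{j+1})$, recovering $\nu$ and hence $\mu=\lambda-\nu$. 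With those two repairs the argument is complete; what it buys over the paper's citation is a self-contained and elementary verification, at the cost of redoing the Achar--Henderson computation.
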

We want to apply this theorem to the space $W=F_1^\perp/F_1$, so we will need to calculate the Jordan types of the induced nilpotent on the spaces 
$$F_{1}^{\perp}/F_{1} ~ \, \text{ and } \, ~ (F_{1}^{\perp}/F_1)/((\C[x]v+F_1)/F_1)=F_1^\perp/(\C[x]v+F_1).$$


\section{Calculating Jordan Types} \label{sec:jordan}
Recall that $V$ is a $2n$-dimensional symplectic vector space over $\C$, as defined in Section \ref{sec:notation}. Throughout this section we fix $(v,x)\in\fN$, such that  $\eType(v,x)=(\mu,\nu)$ where $(\mu,\nu)\in\cQ_n$ is a bipartition of $n$. In particular this means that $\Type(x,V)=\lambda\cup\lambda$ where $\lambda:=\mu+\nu$. 
\vspace{5pt}

Whenever we have an $x$-invariant subspace $W\subset V$, by abuse of notation we will often denote the induced linear transformations $x|_{W}$ and $x|_{V/W}$ also by $x$. It should be clear at all times which vector space we are working with.
\vspace{5pt}

We start by giving two general lemmas regarding the Jordan types of induced nilpotent endomorphisms on hyperplanes and on quotients by lines. The proofs of these lemmas can be found in \cite{Spa} or \cite[Prop. 1.4]{leeuwen2}. 
\vspace{5pt}

\begin{lemma} \label{lemma:dim1} Given a nilpotent endomorphism $x$ of $V$ with Jordan type $\lambda$, and a $1$-dimensional subspace $L$ with $L \subseteq \ker(x)$, suppose that $j$ is maximal such that $L \subseteq \text{ker}(x) \cap \text{im}(x^{j-1})$. Then the Jordan type of $x$ on $V/L$ is the partition obtained by removing the corner box at the bottom of the $j$-th column.
\end{lemma}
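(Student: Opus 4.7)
The plan is to compute the Jordan type of $\bar{x} := x|_{V/L}$ by determining $\dim\ker(\bar{x}^i)$ for every $i \geq 0$. Since the successive differences $\dim\ker(\bar{x}^i) - \dim\ker(\bar{x}^{i-1})$ recover the heights of the columns of the Young diagram of $\bar{x}$, comparing these with the analogous data for $x$ will immediately pinpoint which column loses a box.

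First, let $\pi\colon V \to V/L$ denote the quotient map and set $K_i := \pi^{-1}(\ker(\bar{x}^i)) = \{w \in V \,:\, x^i(w)\in L\}$. Restricting $x^i$ to $K_i$ yields a surjection onto $L\cap\mathrm{im}(x^i)$ with kernel $\ker(x^i)$, giving
\[
\dim\ker(\bar{x}^i) \;=\; \dim K_i - 1 \;=\; \dim\ker(x^i) - 1 + \dim\bigl(L\cap\mathrm{im}(x^i)\bigr).
\]
The hypothesis that $j$ is maximal with $L\subseteq\ker(x)\cap\mathrm{im}(x^{j-1})$ is equivalent to saying $L\subseteq\mathrm{im}(x^i)$ exactly when $0\leq i\leq j-1$; hence the correction term $\dim(L\cap\mathrm{im}(x^i))$ equals $1$ for $i\leq j-1$ and $0$ for $i\geq j$.

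Taking successive differences, one sees that the column heights of the Young diagram of $\bar{x}$ match those of $x$ in every column except column $j$: for $i\neq j$ the $\pm 1$ corrections either both appear on consecutive terms and cancel, or both vanish, while exactly at $i = j$ the correction appears in $\dim\ker(\bar{x}^{j-1})$ but not in $\dim\ker(\bar{x}^j)$. Thus column $j$ loses exactly one box, namely its bottom one, proving the claim. The only non-routine point is the exact-sequence computation for $\dim K_i$; everything else is bookkeeping, and the main subtlety is simply aligning the column-height convention for Young diagrams with the kernel filtration by $\ker(x^i)$.
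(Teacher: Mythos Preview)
Your proof is correct. The key computation
\[
\dim\ker(\bar{x}^i)=\dim\ker(x^i)-1+\dim\bigl(L\cap\mathrm{im}(x^i)\bigr)
\]
is exactly right (it follows from the short exact sequence $0\to\ker(x^i)\to K_i\xrightarrow{x^i}L\cap\mathrm{im}(x^i)\to 0$), and the bookkeeping with the correction term does pinpoint column $j$ as the unique column that shrinks. One small point you left implicit: the fact that there \emph{is} a corner at the bottom of column $j$ (equivalently $c_j(x)>c_{j+1}(x)$) is forced a posteriori because the Jordan type of $\bar{x}$ must be a partition, but it can also be seen directly since $L\subseteq\ker(x)\cap\mathrm{im}(x^{j-1})$ yet $L\not\subseteq\ker(x)\cap\mathrm{im}(x^j)$ forces these two spaces (of dimensions $c_j(x)$ and $c_{j+1}(x)$ respectively) to differ.

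As for comparison with the paper: the paper does not give its own proof of this lemma at all --- it simply cites Spaltenstein and van Leeuwen (Prop.~1.4). Your kernel-dimension argument is the standard elementary approach and is essentially what one finds in those references, so you have supplied the omitted details.
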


\begin{lemma} \label{lemma:codim1}  Given a nilpotent endomorphism $x$ of $V$ with Jordan type $\lambda$, and a codimension $1$ subspace $W$ with $W \supseteq \text{im}(x)$, suppose that $j$ is maximal such that $W \supseteq \text{im}(x) + \ker(x^{j-1})$. Then the Jordan type of $x$ on $W$ is the partition obtained by removing the corner box at the bottom of the $j$-th column. 
\end{lemma}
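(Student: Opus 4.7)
The plan is to deduce Lemma \ref{lemma:codim1} from Lemma \ref{lemma:dim1} by dualising. Let $x^{*} \colon V^{*} \to V^{*}$ be the transpose of $x$; it is nilpotent with the same Jordan type $\lambda$ as $x$. Set $L := W^{\circ} \subset V^{*}$, the annihilator of $W$; since $W$ has codimension one in $V$, $L$ is one-dimensional.

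First I would invoke the standard annihilator identities $(\ker x^{k})^{\circ} = \im((x^{*})^{k})$ and $(\im x^{k})^{\circ} = \ker((x^{*})^{k})$ for $k \geq 0$, and use them to translate the hypotheses. The condition $W \supseteq \im(x)$ becomes $L \subseteq (\im x)^{\circ} = \ker(x^{*})$, so $L$ is contained in $\ker(x^{*})$, exactly the hypothesis of Lemma \ref{lemma:dim1}. For each $j \geq 1$, the condition $W \supseteq \ker(x^{j-1})$ becomes $L \subseteq (\ker x^{j-1})^{\circ} = \im((x^{*})^{j-1})$. Hence the $j$ singled out by the hypothesis of Lemma \ref{lemma:codim1} coincides with the integer characterised by the hypothesis of Lemma \ref{lemma:dim1} applied to the pair $(x^{*}, L)$.

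Next I would identify the Jordan types. The canonical surjection $V^{*} \twoheadrightarrow V^{*}/L \cong W^{*}$ intertwines $x^{*}$ with $(x|_{W})^{*}$, and a nilpotent operator and its transpose have the same Jordan type. Therefore
\[ \Type(x|_{W}, W) = \Type((x|_{W})^{*}, W^{*}) = \Type(x^{*}, V^{*}/L). \]
Applying Lemma \ref{lemma:dim1} to $x^{*}$ and $L$ then produces precisely the partition described in the lemma, namely $\lambda$ with the corner box at the bottom of the $j$-th column removed.

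The only real work is the annihilator bookkeeping; once that is in place, the result is an immediate corollary of Lemma \ref{lemma:dim1}, so I do not expect a genuine obstacle. An alternative route would be to pick a Jordan basis for $x$ and track explicitly which cyclic generators survive inside $W$, but this is substantially more cumbersome than the dual argument and duplicates work already done in the proof of Lemma \ref{lemma:dim1}.
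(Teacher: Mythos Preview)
Your duality argument is correct: the annihilator identities $(\ker x^{k})^{\circ}=\im((x^{*})^{k})$ and $(\im x^{k})^{\circ}=\ker((x^{*})^{k})$ are standard, the identification $V^{*}/W^{\circ}\cong W^{*}$ intertwines $x^{*}$ with $(x|_{W})^{*}$, and Jordan type is preserved under transpose, so Lemma~\ref{lemma:codim1} follows at once from Lemma~\ref{lemma:dim1}.

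The paper does not actually prove this lemma; it simply refers the reader to \cite{Spa} and \cite[Prop.~1.4]{leeuwen2} for both Lemmas~\ref{lemma:dim1} and~\ref{lemma:codim1}. Your approach is therefore not a comparison against an in-paper argument but rather a self-contained derivation. The duality trick you use is exactly the natural way to pass between the two statements and is more economical than an explicit Jordan-basis computation; it has the advantage of making transparent why the two lemmas are formally equivalent.
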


We fix a Jordan `normal' basis for $x$ on $V$ as in Section \ref{sec:notation} 
$$\{v_{ij}, v_{ij}^{*} \, | \, 1 \leq i \leq \ell(\mu+\nu), 1 \leq j \leq (\mu+\nu)_{i}=\lambda_i \},$$  and we write $$v=\sum_{i=1}^{\ell(\mu)}v_{i,\mu_i},$$ which, in terms of the diagram in Example \ref{example:boxes} is the vertical sum of the boxes immediately to the left of the dividing wall. Let $F_1\in\P(\ker (x)\cap(\C[x]v)^\perp$ be a one-dimensional space, we write $F_1=\C v_1$, where 
\begin{equation} \label{equation:v1} 
0\neq v_1=\alpha_1v_{1,1}+\ldots + \alpha_{\ell(\lambda)}v_{\ell(\lambda),1}+\beta_{1}v_{1,\lambda_1}^{*}+\ldots+\beta_{\ell(\lambda)}v_{\ell(\lambda),\lambda_{\ell(\lambda)}}^{*}.
\end{equation}

\begin{definition}\label{def:m-sets}
In \eqref{equation:v1}, let $m$ be the largest integer such that $\alpha_i \neq 0$ or $\beta_i \neq 0$ and we define three sets of indices: 
\begin{eqnarray*}
\Lambda_{m}	&=& \{1 \leq i \leq \ell(\lambda) \, | \, \lambda_i=\lambda_m\}, \\
\Gamma_{m}	&=& 	\{ 1 \leq i \leq \ell(\lambda) \, | \,  \mu_i=\mu_m\}, \\
\Delta_{m}		&=& \{1 \leq i \leq \ell(\lambda)\, | \,  \nu_i=\nu_m\}.
\end{eqnarray*}
\end{definition}
We adopt the convention that if $m>\ell(\mu)$ (respectively $m > \ell(\nu)$), then $\Gamma_m=\varnothing$ (respectively $\Delta_m=\varnothing$).
\begin{example}
Let $\mu=(3^3,2)$, $\nu=(3,2^2,1^2)$, $\lambda=(6,5^2,3,1)$. Here $\Gamma_1=\Gamma_2=\Gamma_3=\{1,2,3\}$, $\Gamma_4=\{4\}$, $\Gamma_5=\{5\}$, while $\Delta_1=\{1\}$, $\Delta_2=\Delta_3=\{2,3\}$, $\Delta_4=\Delta_5=\{4,5\}$.
\end{example}

\begin{definition} \label{def:m}
For $m$ maximal such that either $\alpha_i \neq 0$ or $\beta_i \neq 0$ as in Definition \ref{def:m-sets}, we define $\ovm=\max\Lambda_m$.
\end{definition}

\begin{remark}
Let $\lambda^{tr}$ be the transpose partition of $\lambda$, defined by the property that $\lambda^{tr}_i=|\{j~|~\lambda_j\geq i\}|$. It is worth noting here that $\lambda_{\lambda_m}^{tr}=\overline{m}$.
\end{remark}
\begin{remark}\label{rem:perp}
If $v_1$ is as in \eqref{equation:v1}, notice that, for all $r\geq 1$,
$$ \langle v_1, x^rv\rangle =  \langle x^r v_1 ,v\rangle = \langle 0,v\rangle=0$$
and
$$ 
\langle v_1,v\rangle  =\left\langle \sum_{i=1}^{\ell(\lambda)}\alpha_i v_{i,1}+\sum_{i=1}^{\ell(\lambda)}\beta_i v_{i,\lambda_i}^*,\sum_{i=1}^{\ell(\mu)}v_{i,\mu_i}\right\rangle =- \sum_{i:\,\lambda_i=\mu_i}\beta_i.
$$
It follows that $v_1\in(\C[x]v)^\perp$ if and only if $\displaystyle \sum_{\substack{i :~ \nu_i=0, \\ ~~\mu_i>0}} \beta_i= 0$.
\end{remark}

\subsection{Jordan Type of $x$ on $V_{1}^{\perp}/V_1$: maximal $m$} \label{section:m}

\begin{proposition} \label{prop:duplicate}
Let $F_1=\C{v_1}$ with $v_1$ as in \eqref{equation:v1}, and $m,\ovm$ as in Definition \ref{def:m}. Then the Jordan type of $x$ on $F_{1}^{\perp}/F_1$ is the duplicated partition $\lambda' \cup \lambda'$ where $\lambda'$ is obtained from $\lambda$ by decreasing $\lambda_{\overline{m}}$ by $1$. 
That is, comparing $\lambda$ with $\lambda'$ we have 
\begin{displaymath}
\lambda'_i= 
\begin{cases} 
\lambda_{i} 						& for \quad  i \neq \overline{m}, \\ 
\lambda_{\overline{m}}-1 	& for \quad  i =\overline{m}.
\end{cases}
\end{displaymath}
\end{proposition}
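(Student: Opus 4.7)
The approach is to compute the Jordan type of $x$ on $F_1^\perp/F_1$ in two steps, by applying Lemmas \ref{lemma:dim1} and \ref{lemma:codim1} in succession: first pass from $V$ (where the Jordan type of $x$ is $\lambda\cup\lambda$) to the quotient $V/F_1$ by killing the one-dimensional subspace $F_1$, and then pass from $V/F_1$ to its hyperplane $F_1^\perp/F_1$. In each step we need to identify the column from which a corner box is removed, so the main task is to compute two numerical invariants of $v_1$ associated with the filtrations by kernels and images of powers of $x$.

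For Step 1, since $v_1$ is already in $\ker(x)$, I need the largest $j$ with $v_1\in\im(x^{j-1})$. In the Jordan normal basis for $x$, the kernel vectors $v_{i,1}$ and $v_{i,\lambda_i}^*$ each lie in $\im(x^{j-1})$ if and only if $\lambda_i\geq j$. Hence $v_1\in\im(x^{j-1})$ exactly when $\lambda_i\geq j$ for every index $i$ appearing in the expansion \eqref{equation:v1} with a nonzero coefficient; by the definition of $m$, the maximum such $j$ is $\lambda_m$. Since $\overline{m}=\max\Lambda_m$ equals the number of parts of $\lambda$ that are $\geq\lambda_m$, the bottom box of column $\lambda_m$ in $\lambda\cup\lambda$ sits at row $2\overline{m}$, so Lemma \ref{lemma:dim1} gives that the Jordan type on $V/F_1$ equals $\lambda\cup\lambda$ with the $2\overline{m}$-th part decreased by one.

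For Step 2, I need the largest $j$ such that $F_1^\perp/F_1$ contains $\im(\bar x)+\ker(\bar x^{j-1})$ in $V/F_1$, where $\bar x$ denotes the induced nilpotent. The key general fact is that since $x\in\cS$, the identity $\langle x^ku,w\rangle=\langle u,x^kw\rangle$ gives $\ker(x^k)^\perp=\im(x^k)$ for every $k\geq 0$ (one containment is immediate from the pairing, and the other follows by dimension count). Pulling the inclusion back to $V$ (using $F_1\subseteq F_1^\perp$) reduces the condition to $F_1^\perp\supseteq\im(x)+\{u\in V:x^{j-1}u\in F_1\}$. The first summand lies in $F_1^\perp$ because $F_1\subseteq\ker(x)=\im(x)^\perp$; for the second, an element of $\ker(x^{j-1})$ lies in $F_1^\perp$ iff $v_1\in\im(x^{j-1})$, i.e.\ iff $j\leq\lambda_m$. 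Moreover, when $j\leq\lambda_m$ and $u$ satisfies $x^{j-1}u=v_1$, one has $\langle v_1,u\rangle=\langle x^{j-1}u,u\rangle=0$ by the $\cS$-identity $\langle x^iu,x^ju\rangle=0$ noted in Section \ref{sec:notation}, so the ``extra'' preimage vectors also lie in $F_1^\perp$. Conversely, at $j=\lambda_m+1$ the containment $\ker(x^{j-1})\subseteq F_1^\perp$ already fails, so the maximal $j$ is again $\lambda_m$; Lemma \ref{lemma:codim1} then removes the box at row $2\overline{m}-1$ of column $\lambda_m$.

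Combining the two steps, rows $2\overline{m}-1$ and $2\overline{m}$ of $\lambda\cup\lambda$ both decrease from $\lambda_m$ to $\lambda_m-1$ while all other rows are unchanged; noting that $\lambda_{\overline{m}}=\lambda_m$ and $\lambda_{\overline{m}+1}<\lambda_m$, this is precisely $\lambda'\cup\lambda'$ for the $\lambda'$ described in the statement. The main technical point is Step 2: one must work carefully with the preimage of $\ker(\bar x^{j-1})$ in $V$, and the crucial ingredient that makes the argument go through is the $\cS$-identity, which forces any $u$ with $x^{j-1}u=v_1$ to automatically be perpendicular to $v_1$. Note that the hypothesis $v_1\in(\C[x]v)^\perp$ plays no role in this Jordan-type calculation; it will only be used in subsequent sections to ensure that $v$ descends to a well-defined element of $F_1^\perp/F_1$.
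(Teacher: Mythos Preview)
Your proof is correct and follows essentially the same two-step approach as the paper: apply Lemma~\ref{lemma:dim1} to pass from $V$ to $V/F_1$, then Lemma~\ref{lemma:codim1} to pass to $F_1^\perp/F_1$, identifying the relevant column as $\lambda_m$ in both cases. Your Step~2 is in fact more detailed than the paper's, which simply asserts that any preimage of $v_1$ under a power of $x$ lies in $F_1^\perp$; your explicit use of the $\cS$-identity $\langle x^{j-1}u,u\rangle=0$ and the duality $\ker(x^k)^\perp=\im(x^k)$ makes this transparent.
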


\begin{proof}
By Lemma \ref{lemma:dim1} we may calculate $\Type(x,V/F_1)$. This is given by the maximal $k$ such that $F_{1}$ is contained in $x^{k-1}(V)$, whence the last box in the $k$-th column of the duplicated partition $\lambda \cup \lambda$ is removed. Here, by definition of $m$, is it easily seen that $F_{1} \subseteq x^{\lambda_m-1}(V)$ but $F_1 \not\subseteq x^{\lambda_m}(V)$. Therefore $\Type(x,V/F_{1})= \lambda \cup \lambda'$.  \vspace{5pt}

We now calculate $\Type(x,F_{1}^{\perp}/F_{1})$. By Lemma \ref{lemma:codim1}, this is given by the maximal $l$ such that $F_{1}^{\perp}/F_{1} \supset \ker(x_{|V/F_{1}}^{l-1})$, whence the last box in the $l$-th row of the partition $\lambda \cup \lambda'$ is removed. Translating this back to a condition on $F_{1}$, we deduce that we require the maximal $l$ such that $F_{1}^{\perp} \supseteq (x^{l-1})^{-1}(F_{1})$. It is clear that any vector that maps onto $v_1$ by $x$ is contained in $F_{1}^{\perp}$, so we just need to consider the kernels of powers of $x$. Again by definition of $m$, we see that $F_{1}^{\perp} \supset \ker(x^{\lambda_{m}-1})$ but $F_{1}^{\perp} \not\supset \ker(x^{\lambda_{m}})$. Therefore the Jordan type of $x$ on $F_{1}^{\perp}/F_{1}$ is $\lambda' \cup \lambda'$.
\end{proof}

\subsection{Jordan type of $x$ on $V/\mathbb{C}[x]v$}
We have $V=U\oplus U^{*}$, where $\C[x]v \subseteq U$ and so $V/\mathbb{C}[x]v \simeq U/\mathbb{C}[x]v\oplus U^{*}$.  Lemma 2.5 from \cite{AH} gives us a Jordan basis for $x$ on $U/\mathbb{C}[x]v$. 

\begin{lemma}\cite[Lemma 2.5]{AH}\label{lemma:AH}
\label{trav} The Jordan type of $x$ on $U/\mathbb{C}[x]v$ is $(\mu_2+\nu_1,\mu_3+\nu_2, \ldots)$ and so the Jordan type of $x$ on $V/\mathbb{C}[x]v$ is $\rho=(\mu_1+\nu_1,\mu_2+\nu_1,\mu_2+\nu_2,\mu_3+\nu_2, \ldots)$.
\end{lemma}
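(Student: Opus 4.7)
The plan is to reduce to the Achar--Henderson input \cite[Lemma 2.5]{AH} by exploiting the direct-sum decomposition $V = U \oplus U^{*}$ at the level of $x$-modules. The explicit formulas $xv_{ij} = v_{i,j-1}$ and $xv_{ij}^* = v_{i,j+1}^*$ from the normal basis show that $x$ preserves the spans of the $v_{ij}$'s and of the $v_{ij}^*$'s separately, and the distinguished vector $v = \sum_{i=1}^{\ell(\mu)} v_{i,\mu_i}$ lies entirely in $U$. Hence $\C[x]v \subseteq U$, and there is an $x$-equivariant isomorphism
$$V / \C[x]v \ \simeq \ (U / \C[x]v) \ \oplus \ U^{*}.$$

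Next I would assemble the Jordan types of the two summands. The type on $U/\C[x]v$ is $(\mu_2+\nu_1,\mu_3+\nu_2,\ldots)$, which is the statement of \cite[Lemma 2.5]{AH} (and is the first assertion of the present lemma). For the second summand, reading the normal basis shows that each set $\{v_{i,1}^*, v_{i,2}^*,\ldots, v_{i,\lambda_i}^*\}$ is a single Jordan chain of length $\lambda_i = \mu_i+\nu_i$, so the Jordan type of $x$ on $U^*$ is $\lambda = (\mu_1+\nu_1, \mu_2+\nu_2, \ldots)$.

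Finally I would verify that the multiset union of these two partitions, once sorted, gives exactly $\rho$. The monotonicity inequalities $\mu_1 \geq \mu_2 \geq \cdots$ and $\nu_1 \geq \nu_2 \geq \cdots$ yield the interleaving
$$\mu_1+\nu_1 \ \geq \ \mu_2+\nu_1 \ \geq \ \mu_2+\nu_2 \ \geq \ \mu_3+\nu_2 \ \geq \ \mu_3+\nu_3 \ \geq \ \cdots,$$
so the sorted partition is precisely $\rho = (\mu_1+\nu_1, \mu_2+\nu_1, \mu_2+\nu_2, \mu_3+\nu_2, \ldots)$. The main (and essentially only) obstacle here is the input from \cite{AH}; once that is in hand, the deduction above is a direct-sum decomposition plus a routine sorting check. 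If one wished to reprove the Achar--Henderson statement directly, the natural route would be to use the relations $x^j v = \sum_{i:\mu_i>j} v_{i,\mu_i-j}$ for $0 \leq j < \mu_1$ to eliminate the first row's $\mu$-boxes from a basis of $U/\C[x]v$, and then rearrange the surviving normal-basis vectors into chains of lengths $\mu_{i+1}+\nu_i$; the combinatorial bookkeeping is the only subtle point.
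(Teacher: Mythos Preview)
Your proposal is correct and follows exactly the paper's own approach: the paper records the $x$-equivariant decomposition $V/\C[x]v \simeq (U/\C[x]v)\oplus U^{*}$ immediately before stating the lemma, cites \cite[Lemma~2.5]{AH} for the Jordan type on $U/\C[x]v$, and leaves the interleaving with $\Type(x,U^{*})=\lambda$ as implicit. Your write-up simply makes the sorting step explicit, which is fine.
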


Remember that to use Theorem \ref{thm:T-AH}, we want to find the Jordan type of $x$ on $F_{1}^{\perp}/(\C[x]v+F_{1})$. To answer this question we will first compute $\Type(x, V/(\C[x]v+F_1))$ starting from $\Type(x,V/\C[x]v)$, from that we will then be able to compute $\Type(x,F_1^\perp/(\C[x]v+F_1))$. 

\subsection{Jordan type on $V/(\mathbb{C}[x]v+F_1)$: maximal $k$}
As in Lemma \ref{lemma:AH}, we let $\rho=(\rho_1,\rho_2,\ldots)=\Type(x,V/\C[x]v)$. We then have, for all $i\geq 1$ 
$$
\rho_{2i}=\mu_{i+1}+\nu_{i}\quad\text{ and }\quad\rho_{2i-1}=\mu_{i}+\nu_{i}.
$$ 

\begin{remark}\label{rem:F1-in-Cxv}
If $F_1$ is contained in $\C[x]v$, then $V/(\mathbb{C}[x]v+F_1) \simeq V/\mathbb{C}[x]v$ and $\Type(x,V/(\mathbb{C}[x]v+F_1))=\Type(x,V/\mathbb{C}[x]v)=\rho$. So we will now assume for the rest of this subsection that  $F_1 \not\subseteq \C[x]v$.
\end{remark}

If $F_1 \not\subseteq \C[x]v$, since $V/(\mathbb{C}[x]v+F_1)$ is a quotient of $V/\mathbb{C}[x]v$ by the 1-dimensional subspace $(\mathbb{C}[x]v+F_1)/\C[x]v$, we can use Lemma \ref{lemma:dim1}. 

\begin{lemma}\label{lemma:maxk1}
If $F_1 \not\subseteq \C[x]v$, the Jordan type of $x$ on $V/(\mathbb{C}[x]v+F_1)$ is determined by the maximum $k$ such that $F_1 \subset \ker(x) \cap (\im(x^{k-1})+\C[x]v)$. It is obtained from $\rho=\Type(x,V/\mathbb{C}[x]v)$, by deleting the last box in the bottom of the $k$-th column. 
\end{lemma}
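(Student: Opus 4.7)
The plan is to deduce this lemma as a direct application of Lemma \ref{lemma:dim1} to the pair $(\overline{V}, \overline{L})$, where $\overline{V} := V/\C[x]v$ (equipped with the induced nilpotent $\overline{x}$) and $\overline{L} := (\C[x]v + F_1)/\C[x]v \subset \overline{V}$. The basic observation is that the quotient $V/(\C[x]v + F_1)$ sits naturally as $\overline{V}/\overline{L}$ with compatible nilpotent action, so computing the Jordan type upstairs amounts to computing it for $\overline{x}$ acting on $\overline{V}/\overline{L}$.

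First I would check the hypotheses of Lemma \ref{lemma:dim1}. The hypothesis $F_1 \not\subseteq \C[x]v$ ensures that $\overline{L}$ is genuinely one-dimensional (otherwise the quotient is unchanged and we fall into the case of Remark \ref{rem:F1-in-Cxv}). Since $F_1 \subseteq \ker(x)$ by construction of the flag, applying $x$ to any lift of a generator of $\overline{L}$ lands in $\C[x]v$, so $\overline{L} \subseteq \ker(\overline{x})$. This lets us invoke Lemma \ref{lemma:dim1}, which says the Jordan type of $\overline{x}$ on $\overline{V}/\overline{L}$ is obtained from $\rho = \Type(\overline{x}, \overline{V})$ by deleting the corner box at the bottom of the $j$-th column, where $j$ is maximal such that $\overline{L} \subseteq \ker(\overline{x}) \cap \im(\overline{x}^{j-1})$.

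The only real content is translating the condition $\overline{L} \subseteq \im(\overline{x}^{j-1})$ back to $V$. Unwinding definitions, a generator $v_1$ of $F_1$ maps into $\im(\overline{x}^{j-1})$ in $\overline{V}$ if and only if there exists $w \in V$ with $x^{j-1}(w) - v_1 \in \C[x]v$, that is, $v_1 \in \im(x^{j-1}) + \C[x]v$. Combined with the already-verified $F_1 \subseteq \ker(x)$, this gives exactly $F_1 \subseteq \ker(x) \cap (\im(x^{j-1}) + \C[x]v)$, matching the $k$ in the statement. The conclusion of Lemma \ref{lemma:dim1} is then precisely what we want.

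I do not expect any serious obstacle here: the argument is essentially bookkeeping of quotients, and no new computation is required beyond the reduction to Lemma \ref{lemma:dim1}. The only point that deserves a sentence is the compatibility $V/(\C[x]v + F_1) \cong \overline{V}/\overline{L}$ as modules over $\C[x]$, which is the standard third isomorphism theorem. Explicit extraction of the integer $k$ from the data $(\mu,\nu)$ and the coefficients $(\alpha_i, \beta_i)$ of $v_1$ will be done in subsequent calculations; here we only need the qualitative statement.
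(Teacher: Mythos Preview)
Your proposal is correct and is essentially identical to the paper's own proof: both apply Lemma \ref{lemma:dim1} to the quotient $\overline{V}=V/\C[x]v$ with the one-dimensional subspace $\overline{L}=(\C[x]v+F_1)/\C[x]v$, verify $\overline{L}\subseteq\ker(\overline{x})$ from $F_1\subseteq\ker(x)$, and then translate the image condition $\overline{L}\subseteq\im(\overline{x}^{k-1})$ back to $F_1\subseteq\im(x^{k-1})+\C[x]v$. The paper phrases the translation via the identities $\ker(x_{|V/\C[x]v})=x^{-1}(\C[x]v)$ and $\im(x_{|V/\C[x]v}^{i})=\im(x^i)+\C[x]v$, but this is the same content as your unwinding.
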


\begin{proof}
By Lemma \ref{lemma:dim1}, we require the maximal $k$ such that $$(\mathbb{C}[x]v+F_1)/\C[x]v \subset \ker(x_{|V/\C[x]v}) \cap \im(x_{|V/\C[x]v}^{k-1}).$$   
Since $ \ker(x_{|V/\C[x]v}) =x^{-1}(\mathbb{C}[x]v)$ and $\im(x_{|V/\C[x]v}^{i})=\im(x^i)+\mathbb{C}[x]v$, this above condition can be written as the maximum $k$ such that: 
$$
\mathbb{C}[x]v+F_1 \subset x^{-1}(\mathbb{C}[x]v) \cap( \im(x^{k-1}) + \C[x]v). 
$$
Since $\C[x]v$ is clearly contained in the right hand side, and that $F_1 \subseteq \ker(x) \subseteq x^{-1}(\mathbb{C}[x]v)$,  we can reduce this condition to $F_1 \subset \ker(x) \cap (\im(x^{k-1})+\C[x]v).$ 
\end{proof}

We can actually be explicit regarding what the maximal $k$ in Lemma \ref{lemma:maxk1} is. Again, let $m$ be the maximal $i$ such that $\alpha_i \neq 0$ or $\beta_i \neq 0$ as in Definition \ref{def:m}. Since $F_1 \subseteq \ker(x) \cap \im(x^{\lambda_{m-1}})$, we clearly have that $k\geq \lambda_m$. 

\begin{definition}\label{def:m-al-be}
For $m$ as above, define the following (possibly empty) sets:
\begin{eqnarray*}
\Gamma_m^\alpha	&:=&	\{i\in\Gamma_m~|~\alpha_j=\alpha_m\neq 0,\quad\forall~i\leq j\leq\max\Gamma_m\}, \\
\Gamma_m^\beta	&:=&	\{i\in\Gamma_m~|~\beta_j=0,\quad\forall~i\leq j\leq\max\Gamma_m\}.
\end{eqnarray*}
We define also $m^\alpha=\min\Gamma_m^\alpha$, $m^\beta=\min\Gamma_m^\beta$ if those sets are nonempty.
\end{definition}
\begin{proposition} \label{prop:maxk}
Suppose $k$ is maximal such that $F_1 \subset \ker(x) \cap (\im(x^{k-1})+\C[x]v)$, $m$ as above and $m^\alpha$, $m^\beta$ as in Definition \ref{def:m-al-be}. Then 
$$k=
\begin{cases} 
\lambda_m & \text{ if }\qquad\Gamma_m^\alpha=\varnothing\quad \text{ or }\quad\Gamma_m^\beta=\varnothing; \\
\mu_m+\nu_{m'-1} 	& \text{ if }\qquad \Gamma_m^\alpha\neq\varnothing\neq\Gamma_m^\beta\quad\text{ where }\quad m'=\max\{m^\alpha,m^\beta\}.
\end{cases}
$$
\end{proposition}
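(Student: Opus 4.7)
My plan is to compute directly the maximal $k$ of Lemma \ref{lemma:maxk1} by analysing when $v_1 \in \C[x]v + \im(x^{k-1})$ in the normal basis. Writing $w = \sum_{j=0}^{\mu_1 - 1} c_j x^j v \in \C[x]v$ and using $x^j v = \sum_{i : \mu_i > j} v_{i, \mu_i - j}$, then splitting the target equation $v_1 = w + u$ with $u \in \im(x^{k-1})$ into its $U$ and $U^*$ components, I obtain three families of conditions that are both necessary and sufficient: first, $(\ast)$ $\beta_i = 0$ whenever $\lambda_i < k$; second, (A) $c_{\mu_i - 1} = \alpha_i$ for every $i$ with $\lambda_i < k$; and third, (B) $c_j = 0$ for every $i$ and every $0 \leq j \leq \mu_i - 2$ satisfying $j < k - 1 - \nu_i$. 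The lower bound $k \geq \lambda_m$ follows immediately since $v_{m,1}$ and $v_{m,\lambda_m}^*$ both lie in $\im(x^{\lambda_m - 1})$.

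For Case 1 I first note the equivalences $\Gamma_m^\alpha \neq \varnothing \iff (\max \Gamma_m = m$ and $\alpha_m \neq 0)$, and $\Gamma_m^\beta \neq \varnothing \iff \beta_m = 0$. Combined with the maximality in the definition of $m$, Case 1 amounts to either $\beta_m \neq 0$, or else $\alpha_m \neq 0$ with $\max \Gamma_m > m$. In the first situation $(\ast)$ at $i = m$ rules out $k > \lambda_m$. In the second, applying (A) simultaneously at $i = m$ and $i = m + 1 \in \Gamma_m$ (noting $\lambda_{m+1} \leq \lambda_m < k$ and $\alpha_{m+1} = 0$) forces the inconsistent demands $c_{\mu_m - 1} = \alpha_m \neq 0$ and $c_{\mu_m - 1} = 0$. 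Either way $k = \lambda_m$.

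For Case 2 the hypotheses force $\max \Gamma_m = m$, $\alpha_m \neq 0$, $\beta_m = 0$; set $g_1 = \min \Gamma_m$. To obtain the upper bound $k \leq \mu_m + \nu_{m' - 1}$, I take $k' = \mu_m + \nu_{m' - 1} + 1$ and derive a contradiction in three (overlapping) subcases. If $m^\alpha > g_1$ and realises $m'$, then $i = m^\alpha - 1 \in \Gamma_m$ satisfies $\alpha_{m^\alpha - 1} \neq \alpha_m$ (by minimality of $m^\alpha$), so (A) at $i = m^\alpha - 1$ and $i = m$ demands inconsistent values of $c_{\mu_m - 1}$. If $m^\beta > g_1$ and realises $m'$, then $(\ast)$ at $i = m^\beta - 1$ forces $\beta_{m^\beta - 1} = 0$, contradicting the minimality of $m^\beta$. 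If $m' = g_1$ (so $m^\alpha = m^\beta = g_1$), then (B) applied with $i = g_1 - 1$ (outside $\Gamma_m$, having $\mu_{g_1 - 1} > \mu_m$ and $\nu_{g_1 - 1} < k' - \mu_m$) forces $c_{\mu_m - 1} = 0$, again contradicting $\alpha_m \neq 0$.

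For achievability at $k = \mu_m + \nu_{m' - 1}$, the natural choice is $c_{\mu_m - 1} = \alpha_m$ with all other $c_j = 0$, giving $w = \alpha_m x^{\mu_m - 1} v$; I would then verify term by term that $u := v_1 - w$ lies in $\im(x^{k-1})$, using $\alpha_i = \alpha_m$ on $[m^\alpha, m]$ to kill the $v_{i, 1}$-coefficients there, $\beta_i = 0$ on $[m^\beta, m]$ to eliminate the shallow starred terms, and $\nu_i \geq \nu_{m' - 1}$ for $i \leq m' - 1$ (in particular for $i < g_1$) to place all remaining basis vectors deep enough inside $\im(x^{k-1})$. The main obstacle I anticipate is organising the three-way case split in the upper bound and checking that these subcases really exhaust every configuration of $m^\alpha, m^\beta, g_1$; the degenerate case $m' = g_1 = 1$ corresponds exactly to $F_1 \subseteq \C[x]v$, which lies outside the scope of Lemma \ref{lemma:maxk1} and requires either a convention such as $\nu_0 = +\infty$ or separate treatment.
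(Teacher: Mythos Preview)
Your proposal is correct and takes essentially the same approach as the paper: a direct computation in the normal basis of when $v_1 \in \im(x^{k-1}) + \C[x]v$, hinging on the fact that only the element $x^{\mu_m-1}v$ of $\C[x]v$ is relevant (your choice $w=\alpha_m x^{\mu_m-1}v$ for achievability is exactly the paper's ``only relevant vector'' $\alpha\sum_{i\leq\max\Gamma_m}v_{i,\mu_i-\mu_m+1}$). The paper organises the argument slightly differently---it handles Case~1 by contrapositive (showing $k>\lambda_m$ forces $\Gamma_m^\alpha\neq\varnothing\neq\Gamma_m^\beta$) and in Case~2 decomposes $v_1=v'+v''+v'''$ with $v'=\sum_{i\in\Gamma_m^\alpha}\alpha_iv_{i,1}$, $v''=\sum_{i<m^\alpha}\alpha_iv_{i,1}$, $v'''=\sum_{i<m^\beta}\beta_iv_{i,\lambda_i}^*$, whereas you set up general linear conditions $(\ast)$, (A), (B) on the coefficients $c_j$ and run your three-way case split; these are equivalent packagings of the same computation, with your version making the achievability step more explicit and the paper's decomposition avoiding your separate treatment of the subcase $m'=g_1$.
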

\begin{proof}
We have already remarked that $F_1 \subset \im(x^{\lambda_m-1})$, hence $k\geq \lambda_m$. Suppose $k > \lambda_m$. Then the vectors $v_{i,1}$ and $v_{i,\lambda_i}^{*}$ for $i \in \Lambda_m$ lie outside $\im(x^{k-1})$ and so therefore must be obtained by adding the subspace $\C[x]v$. However, it is easily seen that the vectors $v_{i,\lambda_i}^{*}$ for $i \in \Lambda_m$ cannot be obtained by adding $\C[x]v$ and so $\beta_i=0$ for all $i \in \Lambda_m$ which implies that $\Gamma_m^\beta\neq\varnothing$. Moreover, the only vector contained in $\C[x]v$ that is relevant to our discussion is $\sum_{i \leq \max\Gamma_m} v_{i,\mu_i-\mu_m+1}$. Notice that, since $v_{\max\Gamma_m,1}\not\in\im(x^{k-1})$, if $\alpha_{\max\Gamma_m}=0$, then $v_1\not\in\im(x^{k-1})+\C\left(\sum_{i \leq \max\Gamma_m} v_{i,\mu_i-\mu_m+1}\right)$ for any $k$, which is impossible. Therefore $\alpha_{\max\Gamma_m}\neq 0$, which implies that $m=\max\Gamma_m$ and $\Gamma_m^\alpha\neq\varnothing$. 
Write 
$$ v_1=v'+v''+v'''$$
where
$$v'=\sum_{i\in\Gamma_m^\alpha}\alpha_iv_{i,1};\qquad v''=\sum_{i<m^\alpha}\alpha_i v_{i,1};\qquad v'''=\sum_{i<m^\beta}\beta_i v^*_{i,\lambda_i}.$$
By the way they are defined, we have $v_1\in\im(x^{k-1})+\C[x]v$ if and only if $v',v'',v'''\in\im(x^{k-1})+\C[x]v$.
Notice that by definition of $\Gamma_m^\alpha$, since all the coefficients for the corresponding indices are the same, we can write 
\begin{align*}v'&=\sum_{i\in\Gamma_m^\alpha}\alpha_iv_{i,1}=\sum_{i\in\Gamma_m^\alpha}\alpha v_{i,1}\\
&=\alpha\left(\sum_{i \leq \max\Gamma_m} v_{i,\mu_i-\mu_m+1}\right)-\alpha\left(\sum_{i<m^\alpha}v_{i,\mu_i-\mu_m+1}\right).
\end{align*}
 We then have $v'\in\im(x^{k-1})+\C[x]v$ if and only if $v_{i,\mu_i-\mu_m+1}\in\im(x^{k-1})$ for all $i<m^\alpha$, i.e. 
\begin{align*} 
k-1&\leq \lambda_i-(\mu_i-\mu_m+1) \\
k &\leq \mu_i+\nu_i-\mu_i+\mu_m \\
k &\leq \mu_m+\nu_i.
\end{align*}
Similarly, $v''\in\im(x^{k-1})+\C[x]v$ if and only if $v''\in\im(x^{k-1})$ if and only if $k\leq \lambda_i$ for all $i<m^\alpha$. Clearly this condition is redundant, since for all $i<m^\alpha$ we have $\mu_m+\nu_i\leq \mu_i+\nu_i=\lambda_i$. Finally, $v'''\in\im(x^{k-1})+\C[x]v$ if and only if $v'''\in\im(x^{k-1})$ if and only if $k\leq \lambda_i$ for all $i<m^\beta$. Putting the conditions together, we get indeed that $k\leq\mu_m+\nu_i$ where $i$ is maximal such that $i<m^\alpha$ or $i<m^\beta$, which proves the proposition.
\end{proof}

\begin{remark}
Notice that in the second case of Proposition \ref{prop:maxk}, we always have $m'> 1$. This is because if $m'=1$, then $m^\alpha=m^\beta=1$, which means that $v_1=\sum_{i\in\Gamma_m}\alpha v_{i,1}\in\C[x]v$ but we have assumed $F_1\not\subset\C[x]v$. Also note that it is possible that $\mu_m+\nu_{m'-1}$ can be equal to $\lambda_m$ for certain partitions $\mu$ and $\nu$. 
\end{remark}

\begin{example}
Consider the bipartition $(\mu, \nu)=((4,2^5,1),(3,2,1^2))$, so that $\lambda=(7,4,3^2,2^2,1)$. We may picture this below:
\begin{center}
\begin{tikzpicture}[scale=0.3]
\draw[-,line width=2pt] (0,8) to (0,-8);
\draw (-4,8) -- (-4,7) -- (3,7) -- (3,8) -- (-4,8);
\draw (-3,8) -- (-3,7);
\draw (-2,6) -- (2,6) -- (2,8);
\draw (-2,5) -- (1,5);
\draw (-2,4) -- (1,4) -- (1,8);
\draw (-2,8) -- (-2,2) -- (0,2);
\draw (-2,3) -- (0,3);
\draw (-1,8) -- (-1,1) -- (0,1);


\draw (-4,-8) -- (-4,-7) -- (3,-7) -- (3,-8) -- (-4,-8);
\draw (-3,-8) -- (-3,-7);
\draw (-2,-6) -- (2,-6) -- (2,-8);
\draw (-2,-5) -- (1,-5);
\draw (-2,-4) -- (1,-4) -- (1,-8);
\draw (-2,-8) -- (-2,-2) -- (0,-2);
\draw (-2,-3) -- (0,-3);
\draw (-1,-8) -- (-1,-1) -- (0,-1);

\end{tikzpicture}
\end{center}

In this case, $\C[x]v$ is $4$-dimensional, being the span of the vertical sum of boxes to the left of the wall on the top half of the diagram. Suppose 
$$\footnotesize{
v_1=\alpha_{1}v_{1,1}+\alpha_{2}v_{2,1} + \alpha_{3}v_{3,1}+\alpha_{4}v_{4,1} + \alpha_{5}v_{5,1} + \alpha_{6}v_{6,1}+ \beta_{6}v_{6,2}^{*}+ \beta_{5}v_{5,2}^{*}+\beta_{4}v_{4,3}^{*}+\beta_{3}v_{3,3}^{*}  + \beta_{2}v_{4,4}^{*}  + \beta_{1}v_{1,7}^{*}},
$$ with $\alpha_6 \neq 0$ or $\beta_6 \neq 0$.  Observe that by Remark \ref{rem:perp}, since $F_1\subseteq(\C[x]v)^{\perp}$, we require $\beta_5+\beta_6=0$.  In this case we have $m=6$ with $\lambda_m=\mu_m=2$; $\Gamma_m=\{2,3,4,5,6\}$, $\Lambda_m=\{5,6\}$ and $\Delta_m=\{5,6,7\}$. 
Therefore we have
$$
F_1 \subset  
\begin{cases} 
\im(x^4)+ \C[x]v  	& \text{if} \quad  \alpha_2=\ldots =\alpha_6 \neq 0 \quad \text{and} \quad \beta_2=\ldots=\beta_6=0, \\ 
\im(x^3)+ \C[x]v		& \text{if} \quad  \alpha_3=\alpha_4=\alpha_5 =\alpha_6 \neq 0 \quad \text{and} \quad \beta_3=\beta_4=\beta_5=\beta_6=0, \\ 
\im(x^2)+ \C[x]v  	& \text{if} \quad \alpha_5=\alpha_6 \neq 0 \quad \text{and} \quad \beta_5=\beta_6=0, \\ 
\im(x)+ \C[x]v 		& \text{in any case}.					
\end{cases}
$$
Therefore the maximal $k$ such that $V_1 \subset \im(x^{k-1}) + \C[x]v$ is: 
$$
k= \begin{cases} 
5 (=\mu_6+\nu_1)	& \text{if} \quad  \alpha_2=\ldots =\alpha_6 \neq 0 \quad \text{and} \quad \beta_2=\ldots=\beta_6=0, \text{or} \\  
4(=\mu_6+\nu_2)	& \text{if} \quad  \alpha_3=\alpha_4=\alpha_5 =\alpha_6 \neq 0 \quad \text{and} \quad \beta_3=\beta_4=\beta_5=\beta_6=0, \text{or} \\ 
3(=\mu_6+\nu_4) 	& \text{if} \quad \alpha_5=\alpha_6 \neq 0 \quad \text{and} \quad \beta_5=\beta_6=0, \text{or}\\ 
2(=\lambda_6) 		& \text{otherwise}.	
\end{cases}
$$
 
\end{example}

\subsection{The Jordan Type of $x$ on $F_{1}^{\perp}/(\C[x]v+F_{1})$: maximal $l$}

\begin{lemma} \label{lemma:preimagesum}
Let $l$ be a non-negative integer with $l < \lambda_m$. Then $$(x^{l})^{-1}(\C[x]v+F_1)=(x^{l})^{-1}(\C[x]v) + (x^{l})^{-1}(F_1).$$ 
\end{lemma}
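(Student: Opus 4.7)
The inclusion $(x^l)^{-1}(\C[x]v) + (x^l)^{-1}(F_1) \subseteq (x^l)^{-1}(\C[x]v+F_1)$ is immediate from linearity of $x^l$ and the fact that the right-hand side is a subspace. So the whole content of the lemma is the reverse inclusion, and my plan is to reduce it to the single statement $v_1 \in \im(x^l)$, which is where the hypothesis $l<\lambda_m$ will be used.

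To formalise this reduction, I would pick $w \in (x^l)^{-1}(\C[x]v + F_1)$ and write $x^l w = p + c v_1$ with $p \in \C[x]v$ and $c \in \C$. The goal is to split $w = w_1 + w_2$ so that $x^l w_1 = p$ and $x^l w_2 = c v_1$. If I can produce any $w_2 \in V$ with $x^l w_2 = c v_1$, then setting $w_1 := w - w_2$ automatically gives $x^l w_1 = p \in \C[x]v$, so $w_1 \in (x^l)^{-1}(\C[x]v)$ and $w_2 \in (x^l)^{-1}(F_1)$, finishing the proof. The existence of such a $w_2$ is exactly the claim that $c v_1 \in \im(x^l)$, which is trivial when $c=0$ and reduces to $v_1 \in \im(x^l)$ otherwise.

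The heart of the argument is therefore to verify that $v_1 \in \im(x^l)$ whenever $l < \lambda_m$. Using the normal basis expansion
\[
v_1 = \sum_{i \le m} \alpha_i v_{i,1} + \sum_{i \le m} \beta_i v^*_{i,\lambda_i},
\]
the key observation is that $v_{i,1} = x^{\lambda_i - 1} v_{i,\lambda_i}$ and $v^*_{i,\lambda_i} = x^{\lambda_i - 1} v^*_{i,1}$, so each summand lies in $\im(x^{\lambda_i - 1})$. Since $\lambda_i \geq \lambda_m$ for all $i \leq m$ and $l \leq \lambda_m - 1$, each summand lies in $\im(x^{\lambda_m - 1}) \subseteq \im(x^l)$, hence so does $v_1$.

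I do not expect any serious obstacle: the linear algebra is routine, and the hypothesis $l < \lambda_m$ is used in exactly one place (the chain of inclusions $\im(x^l) \supseteq \im(x^{\lambda_m - 1})$). The only point of care is to track that $m$ is defined as the maximal index with $\alpha_m \neq 0$ or $\beta_m \neq 0$, so the expansion of $v_1$ really is supported on indices $i \leq m$, making the comparison $\lambda_i \geq \lambda_m$ valid term-by-term.
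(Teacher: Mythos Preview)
Your proposal is correct and follows essentially the same route as the paper: both argue the nontrivial inclusion by writing $x^l w$ as an element of $\C[x]v$ plus a scalar multiple of $v_1$, then use $v_1\in\im(x^l)$ (equivalently $F_1\subseteq\im(x^{\lambda_m-1})\subseteq\im(x^l)$) to peel off a preimage of the $v_1$-part. The only difference is cosmetic: the paper cites the inclusion $F_1\subseteq\im(x^{\lambda_m-1})$ as already known from the definition of $m$, whereas you re-derive it explicitly from the normal-basis expansion of $v_1$.
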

\begin{proof}
It is clear that $(x^{l})^{-1}(\C[x]v+F_1) \supseteq (x^{l})^{-1}(\C[x]v) + (x^{l})^{-1}(F_1),$ so we prove the reverse inclusion. Suppose $w \in V$ such that $x^{l}(w) \in \C[x]v+F_1$. Since an obvious basis for $ \C[x]v+F_1$ is $\{v,x(v), \ldots x^{\mu_1-1}(v),v_1\}$, we may then write $$x^{l}(w)=\sum_{i=0}^{\mu_1-1}a_ix^{i}v + bv_1,$$ for $a_i, b \in \C.$ Since $l \leq \lambda_{m} -1$ we have $F_1  \subseteq \im(x^{\lambda_m-1}) \subseteq \im(x^{l})$ and so there exists a $y \in V$ such that $x^{l}(y)=bv_1$. We then find that $x^{l}(w-y)=\displaystyle\sum_{i=0}^{\mu_1-1}a_ix^{i}v$ and so $w-y \in (x^{l})^{-1}(\C[x]v)$, hence $w=y -(y-w) \in (x^{l})^{-1}(\C[x]v) + (x^{l})^{-1}(F_1),$ and we are done. 
\end{proof}
The space $F_{1}^{\perp}/(\C[x]v+F_{1})$ is a codimension 1 subspace of $F/(\C[x]v+F_{1})$ and so we can use Lemma \ref{lemma:codim1}. 

\begin{lemma}  \label{lemma:preimage}
The Jordan type of $x$ on $F_{1}^{\perp}/(\C[x]v+F_{1})$ is determined by the maximal $l$ such that $F_{1}^{\perp} \supseteq (x^{l-1})^{-1}(\C[x]v)$. It is then obtained $\Type(x,V/(\C[x]v+F_{1}))$ by removing the last box at the bottom of the $l$-th column. 
\end{lemma}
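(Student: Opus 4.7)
The plan is to apply Lemma \ref{lemma:codim1} to the codimension-one inclusion $F_1^\perp/(\C[x]v+F_1) \subseteq V/(\C[x]v+F_1)$, and then simplify the resulting condition using Lemma \ref{lemma:preimagesum}. First I would check the hypothesis of Lemma \ref{lemma:codim1}, namely that $\im(x)$ is contained in $F_1^\perp + \C[x]v + F_1$. Since $F_1 \subseteq \ker(x)$ and $x \in \cS$ is self-adjoint for the symplectic form, a short dimension count gives $(\ker x)^\perp = \im(x)$, so $\im(x) \subseteq (\ker x)^\perp \subseteq F_1^\perp$ and the hypothesis holds. Lemma \ref{lemma:codim1} then identifies the relevant column index as the maximal $l$ satisfying $F_1^\perp \supseteq (x^{l-1})^{-1}(\C[x]v + F_1)$, and the remaining task is to show this condition is equivalent to $F_1^\perp \supseteq (x^{l-1})^{-1}(\C[x]v)$.

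The forward implication is immediate since $\C[x]v \subseteq \C[x]v + F_1$. For the reverse, I would first observe that the maximal $l$ satisfying the simpler condition is bounded by $\lambda_m$: at $l = \lambda_m + 1$, the preimage $(x^{\lambda_m})^{-1}(\C[x]v)$ contains $\ker(x^{\lambda_m}) = (\im x^{\lambda_m})^\perp$, so the containment $F_1^\perp \supseteq \ker(x^{\lambda_m})$ would force $F_1 \subseteq \im(x^{\lambda_m})$, contradicting the definition of $m$ in Definition \ref{def:m}. Hence Lemma \ref{lemma:preimagesum} applies in the relevant range and gives
\[ (x^{l-1})^{-1}(\C[x]v + F_1) = (x^{l-1})^{-1}(\C[x]v) + (x^{l-1})^{-1}(F_1), \]
so the equivalence reduces to showing $(x^{l-1})^{-1}(F_1) \subseteq F_1^\perp$ for $l \leq \lambda_m$.

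The core of the argument is this last containment. Since $F_1 \subseteq \im(x^{\lambda_m - 1}) \subseteq \im(x^{l-1})$, one can write $v_1 = x^{l-1}(y_1)$ for some $y_1 \in V$. Given $w$ with $x^{l-1}(w) = c v_1$, I would decompose $w = cy_1 + w'$ with $w' \in \ker(x^{l-1})$. Using the self-adjointness relation $\langle xa,b\rangle = \langle a,xb\rangle$ for $x \in \cS$, one gets $\langle w', v_1\rangle = \langle w', x^{l-1} y_1\rangle = \langle x^{l-1} w', y_1\rangle = 0$, while the identity $\langle x^i u, x^j u\rangle = 0$ noted at the start of Section \ref{sec:notation} gives $\langle y_1, v_1\rangle = \langle y_1, x^{l-1} y_1\rangle = 0$. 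Therefore $\langle w, v_1\rangle = 0$, i.e.\ $w \in F_1^\perp$, completing the proof.

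The main obstacle will be setting up the chain of equivalences in the right order: the bound $l \leq \lambda_m$ must be established before Lemma \ref{lemma:preimagesum} can legitimately be invoked, and the final containment $(x^{l-1})^{-1}(F_1) \subseteq F_1^\perp$ is where the special structure of the exotic setup (in particular the identity $\langle x^i u, x^j u\rangle = 0$ valid only because $x \in \cS$) plays an essential role, rather than being a formal consequence of Lemmas \ref{lemma:dim1} and \ref{lemma:codim1} alone.
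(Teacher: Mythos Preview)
Your proposal is correct and follows essentially the same route as the paper: apply Lemma~\ref{lemma:codim1} to the codimension-one inclusion, translate the resulting condition back to $V$, then simplify via Lemma~\ref{lemma:preimagesum} together with the containment $(x^{l-1})^{-1}(F_1)\subseteq F_1^\perp$. The only differences are that you supply an explicit proof of this last containment (which the paper simply asserts) and derive the bound $l\le\lambda_m$ via $\ker(x^{\lambda_m})=(\im x^{\lambda_m})^\perp$ rather than via the dual reformulation $F_1\subseteq x^{l-1}((\C[x]v)^\perp)$ used in the paper; both are minor variations on the same argument.
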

\begin{proof}
We first note that the condition $F_{1}^{\perp} \supseteq (x^{l-1})^{-1}(\C[x]v)$ is equivalent to $F_{1} \subseteq x^{l-1}((\C[x]v)^{\perp})$ and since $F_1 \subseteq \im(x^{\lambda_m-1})$ but $F_1 \not\subseteq \im(x^{\lambda_m})$, we have $l-1 < \lambda_m$. 
By Lemma \ref{lemma:codim1} we require the maximal $l$ such that  
$$
F_{1}^{\perp}/(\C[x]v+F_{1}) \supseteq \im(x_{|V/(\C[x]v+F_{1})})+\ker(x_{|V/(\C[x]v+F_{1})}^{l-1}).
$$ 
Translating this back to conditions on $F_1$ we calculate, 
\begin{eqnarray*}
\im(x_{|V/(\C[x]v+F_{1})}) 		&=& \im(x)+(\C[x]v+F_1) \quad \text{and}  \\
\ker(x_{|V/(\C[x]v+F_{1})}^{l-1})	&=& (x^{l-1})^{-1}(\C[x]v+F_1).
\end{eqnarray*} 
Thus we require $F_{1}^{\perp} \supseteq \im(x)+(\C[x]v+F_1) + (x^{l-1})^{-1}(\C[x]v+F_1)$. Since $F_{1}^{\perp}$ always contains $\im(x)$ and $\C[x]v+F_1$ is clearly contained in $(x^{l-1})^{-1}(\C[x]v+F_1)$, the essential requirement is:  
$ F_{1}^{\perp} \supseteq (x^{l-1})^{-1}(\C[x]v+F_1).$ But by Lemma \ref{lemma:preimagesum} and the fact that $(x^{l-1})^{-1}(F_1) \subseteq F_{1}^{\perp}$, this condition simplifies to $ F_{1}^{\perp} \supseteq (x^{l-1})^{-1}(\C[x]v).$\end{proof}

Recall that $m$ is defined to be the largest index such that $\alpha_i \neq 0$ or $\beta_i \neq 0$. 
Let $m''=\max\Delta_m+1$ be the smallest integer such that $m'' > m$ and $\nu_{m''} < \nu_{m}$. 
\begin{proposition} \label{prop:maxl}
The maximal $l$ such that $F_{1}^{\perp} \supseteq (x^{l-1})^{-1}(\C[x]v)$ is:
$$
l= \begin{cases} 
\mu_{m''}+\nu_m 			& 	\text{ if } 
\max\Gamma_m \leq \max\Delta_m\quad \text{and} \quad \displaystyle \sum_{ i \in \Delta_{m} : \,  i \leq m} \beta_i\neq 0, \\
\lambda_m				&	\text{otherwise.}
\end{cases}
$$
\end{proposition}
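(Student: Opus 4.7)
The plan is to translate the condition dually into a linear-algebra problem and then solve it by careful coefficient bookkeeping. As observed at the start of the proof of Lemma~\ref{lemma:preimage}, the condition $F_1^\perp \supseteq (x^{l-1})^{-1}(\C[x]v)$ is equivalent to $v_1 \in x^{l-1}((\C[x]v)^\perp)$, and since $F_1 \subseteq \im(x^{\lambda_m - 1})$ but $F_1 \not\subseteq \im(x^{\lambda_m})$, any admissible $l$ satisfies $l \leq \lambda_m$. So the task is to decide when $l = \lambda_m$ itself is admissible, and, if not, to compute the largest $l < \lambda_m$ that is.

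First I would write a candidate preimage $w = \sum c_{ij} v_{ij} + \sum d_{ij} v_{ij}^*$ and expand the equation $x^{l-1}w = v_1$ in the normal basis using the explicit action of $x$. This forces $c_{i,l} = \alpha_i$ and $d_{i,\lambda_i - l + 1} = \beta_i$ for $i \leq m$, forces $c_{i,k} = 0$ for $k > l$ and $d_{i,k} = 0$ for $k < \lambda_i - l + 1$, and leaves all remaining coefficients of $w$ as free parameters. The condition $w \in (\C[x]v)^\perp$ then unpacks, by pairing $w$ with $x^k v$, into $\mu_1$ linear equations, one for each $k \in \{0, 1, \ldots, \mu_1 - 1\}$:
\[ \sum_{i:\, \mu_i > k} d_{i, \mu_i - k} = 0. \]
Since each $d_{ij}$ appears in at most one of these equations, the whole system is solvable precisely when every equation either has a vanishing fixed part or contains at least one free variable.

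The next step is to locate the only equation that can obstruct solvability. The fixed part of the $k$-th equation equals $\sum_{i \leq m,\, \nu_i = l - 1 - k,\, \mu_i > k} \beta_i$, which vanishes unless $k = l - 1 - \nu^*$ for some $\nu^*$ appearing among the $\nu_i$ with $i \leq m$. For $\nu^* > \nu_m$ the index $i = m$ itself furnishes a free $d$-variable in the corresponding equation (since then $k < l - \nu_m - 1$), and for $\nu^* < \nu_m$ the fixed sum is vacuous because no $i \leq m$ has $\nu_i < \nu_m$. Hence the only potentially obstructing equation is the one at $k^* := l - 1 - \nu_m$, whose fixed part is exactly $\sum_{i \in \Delta_m,\, i \leq m} \beta_i$, and freedom at $k^*$ requires some $i > m$ with $\mu_i > k^*$ and either $\nu_i < \nu_m$ or $\lambda_i < l$.

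Specialising to $l = \lambda_m$ makes $k^* = \mu_m - 1$; a short inspection using that the $i > m$ with $\mu_i \geq \mu_m$ are exactly those in $\Gamma_m$ with $i > m$ shows that freedom at $k^*$ fails precisely when $\max\Gamma_m \leq \max\Delta_m$, so $l = \lambda_m$ is admissible outside of Case~1 of the statement. In Case~1 I would then decrease $l$ until freedom at $k^*$ is restored, which happens as soon as $i = m''$ qualifies, i.e., $\mu_{m''} > k^* = l - 1 - \nu_m$; the largest such $l$ is $l = \mu_{m''} + \nu_m$. A final verification is needed to check that no new obstruction appears at values $\nu^* \neq \nu_m$ once $l$ has been decreased. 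I expect the main obstacle of the proof to be exactly this bookkeeping---especially the clean characterisation that only $k = k^*$ is genuinely obstructive---rather than the initial dual translation, which is routine.
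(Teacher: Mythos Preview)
Your approach is correct and genuinely different from the paper's. The paper works directly with the containment $F_1^\perp \supseteq (x^{l-1})^{-1}(\C[x]v)$: it first argues that $l\le\lambda_m$, disposes of the case $\max\Gamma_m>\max\Delta_m$ by observing that then $(x^{l-1})^{-1}(\C[x]v)$ adds nothing beyond $\ker(x^{l-1})+\C[x]v$, and in the remaining case exhibits a single explicit witness $w=\sum_{i=1}^{m''-1}v_{i,\mu_i+\nu_m}$, computes $\langle w,v_1\rangle=\sum_{i\in\Delta_m:\,i\le m}\beta_i$, and reads off both cases from whether $w\in F_1^\perp$.

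You instead dualise to $v_1\in x^{l-1}((\C[x]v)^\perp)$ and reduce to solvability of the decoupled linear system $\sum_{i:\,\mu_i>k}d_{i,\mu_i-k}=0$. The key observation that each $d_{ij}$ appears in at most one equation makes solvability equivalent to ``every equation with nonzero fixed part has a free variable'', and your identification of $k^*=l-1-\nu_m$ as the unique potentially obstructive index is correct: for $\nu^*>\nu_m$ the variable $d_{m,\mu_m-k}$ is genuinely free, and for $\nu^*<\nu_m$ the fixed sum is empty. The characterisation ``no free variable at $k^*$ for $l=\lambda_m$'' $\iff$ $\max\Gamma_m\le\max\Delta_m$ is exactly right, as is the computation of the threshold $l=\mu_{m''}+\nu_m$ at which $i=m''$ restores freedom. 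For your ``final verification'', note that the arguments for $\nu^*\neq\nu_m$ are uniform in $l\le\lambda_m$, so no new obstruction can appear when $l$ decreases; this closes the gap you flagged. One edge case worth recording: when $\nu_m=0$ the fixed sum at $k^*$ coincides with the orthogonality condition of Remark~\ref{rem:perp} and hence vanishes, consistent with $\Delta_m=\varnothing$.

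The paper's proof is shorter because the single vector $w$ encodes the whole obstruction, but it relies on spotting that vector and its reduction step (``$w'-w\in F_1^\perp$'') is terse. Your approach is more systematic and makes the role of each condition ($\max\Gamma_m\le\max\Delta_m$, the $\beta$-sum, the value $\mu_{m''}+\nu_m$) transparent through the linear system, at the cost of more bookkeeping.
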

\begin{proof}
We first note that by definition of $m$ and Proposition \ref{prop:duplicate}, $F_1 \subseteq \im(x^{\lambda_m-1})$ but $F_1 \not\subseteq \im(x^{\lambda_m})$. Equivalently, we have $F_{1}^{\perp} \supseteq \ker(x^{\lambda_m-1})$ but $F_{1}^{\perp} \not\supseteq \ker(x^{\lambda_m})$. Since $\ker(x^{\lambda_m})$ is clearly contained in $(x^{\lambda_m})^{-1}(\C[x]v)$ we deduce that $l$ can be at most $\lambda_m$. \vspace{5pt}

Suppose first that $\max\Gamma_m > \max\Delta_m$. Then it is easy to see that the only vectors that have non-zero image in $\C[x]v$ under some power of $x$ (less than $\lambda_m$) are already contained in $\C[x]v$ up to some vector in the kernel of $x$ raised to that power. In this case we have $F_{1}^{\perp} \supseteq (x^{l-1})^{-1}(\C[x]v)$ if and only if  $F_{1}^{\perp} \supseteq \ker(x^{l-1})$; therefore $l=
\lambda_m$. So we suppose from now on that  $\max\Gamma_m \leq \max\Delta_m$. \vspace{5pt}

Suppose that $l < \lambda_m$. Then there must be a vector $w'$ in $(x^{l})^{-1}(\C[x]v) \setminus \ker(x^{l})$ which is not contained in $F_{1}^{\perp}$. Define: 
$$
w=\sum_{i=1}^{m''-1}v_{i,\mu_i+\nu_m}
$$
(noting that $m''-1=\max\Delta_m)$. Then it follows that $w' - w \in F_1^{\perp}$, by looking at the rightmost column of boxes on which the vector $w$ is supported. In this case we have $w$ is contained in  $(x^{\mu_{m''}+\nu_m})^{-1}(\C[x]v)$ since 
$$
x^{\mu_{m''}+\nu_m}(w)=\sum_{i=1}^{m''-1}v_{i,\mu_i-\mu_{m''}}=x^{\mu_{m''}}(v) \in \C[x]v,
$$ and we compute
$
\langle w,v_1 \rangle = \displaystyle \sum_{ i \in \Delta_{m}:~  i \leq m} \beta_i.
$
\\
\noindent
Therefore, $w$ is an element of $F_{1}^{\perp}$ if and only if $\sum_{\substack{ i \in \Delta_{m}  \\  i \leq m}} \beta_i = 0$. Thus if $\sum_{\substack{ i \in \Delta_{m}  \\  i \leq m}} \beta_i \neq 0$, then $F_{1}^{\perp} \supseteq (x^{\mu_{m''}+\nu_m-1})^{-1}(\C[x]v)$ but $F_{1}^{\perp} \not\supseteq (x^{\mu_{m''}+\nu_m})^{-1}(\C[x]v)$ and so the maximal $l$ in this case is $\mu_{m''}+\nu_m$. 
\end{proof}

\begin{remark}
Note that it is possible for $\mu_{m''}$ to be $0$ in Proposition \ref{prop:maxl}, for example this is the case when $\ell(\mu) < m''$. But the key point here is that $m''=\max\Delta_m+1$ and so the first value of $l$ in Proposition \ref{prop:maxl} can be written as $\mu_{\max\Delta_m+1}+\nu_{\max\Delta_m}$, which is indeed a part of the partition $\rho$. 
\end{remark}
We finish this section with two examples which illustrate the condition on the coefficients of $v_1$. 

\begin{example} \label{ex:right}
Consider the bipartition $(\mu, \nu)=((2^3,1),(2^2,1))$. 

\begin{center}
\begin{tikzpicture}[scale=0.3]
\draw[-,line width=2pt] (0,5) to (0,-5);

\draw (-2,5) -- (-2,2) -- (0,2);
\draw (-2,5) -- (2,5) -- (2,3) -- (-2,3);
\draw (-2,4) -- (2,4);
\draw (-1,5) -- (-1,1) -- (0,1);
\draw (1,5) -- (1,2) -- (0,2);
\draw (-2,-5) -- (-2,-2) -- (0,-2);
\draw (-2,-5) -- (2,-5) -- (2,-3) -- (-2,-3);
\draw (-2,-4) -- (2,-4);
\draw (-1,-5) -- (-1,-1) -- (0,-1);
\draw (1,-5) -- (1,-2) -- (0,-2);

\end{tikzpicture}
\end{center}
Let $v_1=\alpha_1v_{11}+ \alpha_{2}v_{21}+\beta_{2}v_{24}^{*}+\beta_{1}v_{14}^{*}$ and put $F_1=\C{v_1}$. Here, the only vectors that have non-zero image in $\C[x]v$ are linear combinations of $u:=v_{13}+v_{23}+v_{33} (\in x^{-2}(\C[x]v))$ and vectors in $\C[x]v$. Now $u$ is clearly contained in $F_{1}^{\perp}$, so in this case, the maximal $l$ such that $F_{1}^{\perp} \supseteq (x^{l-1})^{-1}(\C[x]v)$ coincides with the maximal $l$ such that $F_{1}^{\perp} \supseteq \ker(x^{l-1})$. Therefore we can easily see that  $F_{1}^{\perp} \supseteq \ker(x^3)$ but $F_{1}^{\perp} \not\supseteq \ker(x^4)=V$. Hence the maximal $l$ in this case is $4$ and there is no restriction on the coefficients of $v_1$. \vspace{5pt}

Moreover, we can see that $F_{1} \subseteq \im(x^3) + \C[x]v$ but $F_{1} \not\subseteq \C[x]v$, and so the $k$ as in Proposition \ref{prop:maxk} is $4$ in this case. Therefore $\eType(v+F_1, x_{|F_{1}^{\perp}/F_{1}})=((2^3,1),(2,1^2))$ and is obtained by deleting the box from the bottom of the second column on the right of the wall. 
\end{example}

\begin{example}
By contrast to Example \ref{ex:right} where there were no conditions on the coefficients on $v_1$, consider the bipartition $((2^2,1),(2^2))$. 
\begin{center} \label{example:right}
\begin{tikzpicture}[scale=0.3]
\draw[-,line width=2pt] (0,4) to (0,-4);

\draw (-2,4) -- (-2,2) -- (0,2);
\draw (-2,4) -- (2,4) -- (2,2) -- (-2,2);
\draw (-2,3) -- (2,3);
\draw (-1,4) -- (-1,1) -- (0,1);
\draw (1,4) -- (1,2);
\draw (-2,-4) -- (-2,-2) -- (0,-2);
\draw (-2,-4) -- (2,-4) -- (2,-2) -- (-2,-2);
\draw (-2,-3) -- (2,-3);
\draw (-1,-4) -- (-1,-1) -- (0,-1);
\draw (1,-4) -- (1,-2);

\end{tikzpicture}
\end{center}

Again let $v_1=\alpha_1v_{11}+ \alpha_{2}v_{21}+\beta_{2}v_{24}^{*}+\beta_{1}v_{14}^{*}$. We first observe that $F_{1} \subseteq \im(x^3) + \C[x]v$, but $F_{1} \not\subseteq \C[x]v$ and so the $k$ as in Proposition \ref{prop:maxk} is again $k=4$. 
\vspace{5pt}

The key difference from Example \ref{ex:right} is that we do have vectors not already contained in $\C[x]v$ that have non-zero image in $\C[x]v$ - for example $x^{2}(v_{13}+v_{23})=x^{3}(v_{14}+v_{24})=v_{11}+v_{21}=x(v)$. Now $v_{14}+v_{24} \in F_{1}^{\perp}$ if and only if $\beta_1+\beta_2=0$ and so the maximal $l$ such that $F_{1}^{\perp} \subseteq (x^{l-1})^{-1}(\C[x]v)$ is $l=4$ if $\beta_1+\beta_2=0$ and $l=3$ if $\beta_1+\beta_2 \neq 0.$ We therefore have:
$$
\eType(v+F_1, x_{|F_{1}^{\perp}/F_{1}})= 
\begin{cases}
((2,1^2),(2^2)) & \text{if} \quad \beta_1+\beta_2=0, \\
((2^2,1),(2,1)) & \text{if} \quad \beta_1+\beta_2 \neq 0. \\
\end{cases}
$$
Here, the first case corresponds to deleting the box from the left hand side of the wall and the second case corresponds to deleting a box from the right hand side of the wall.

\end{example}

\section{Proof of Proposition \ref{prop:B-irred}}\label{sec:proof}

We can now put together all the results about Jordan types to prove Proposition \ref{prop:B-irred}. Fix $(v,x)\in\fN$, with $\eType(v,x)=(\mu,\nu)$. Let $F_1=\C v_1\in\P(\ker(x)\cap(\C[x]v)^\perp)$ such that $$\eType(v+F_1,x|_{F_1^\perp/F_1})=(\mu',\nu')$$ where $(\mu',\nu')$ is obtained from $(\mu,\nu)$ by removing a box in row $\overline{m}$, where $\lambda_{\overline{m}}>\lambda_{\overline{m}+1}$. We know by Proposition \ref{prop:duplicate} that we have
$$ v_1=\sum_{i\leq \ovm}\alpha_i v_{i,1}+\sum_{i\leq \ovm}\beta_iv_{i,\lambda_i}^*$$
where $\max\{i~|~\alpha_i\neq 0 \text{ or }\beta_i\neq 0\}=m$ with $\lambda_m=\lambda_{\overline{m}}$.
It follows that $\cB^{(\mu,\nu)}_{(\mu',\nu')}\subset \P^{2\overline{m}-1}$, where we take $[\alpha_1:\ldots:\alpha_{\overline{m}}:\beta_1:\ldots:\beta_{\overline{m}}]$ to be the homogeneous coordinates for $\P^{2\overline{m}-1}$. Using this inclusion, we will describe $\cB^{(\mu,\nu)}_{(\mu',\nu')}$ by giving conditions on the $\alpha_i$'s and $\beta_i$'s and therefore show that it is irreducible and has the required dimension. We examine different cases. 

\subsection{The case $\nu=\nu'$}\label{sec:mu} We assume now $\mu'$ is obtained by removing a box from row $\overline{m}$ in $\mu$, while $\nu=\nu'$. In particular this implies that $\mu_{\overline{m}}>\mu_{\overline{m}+1}\geq 0$ and that $\ovm= \max\Gamma_m\leq \max\Delta_m$.  \vspace{5pt}

If $\ovm>1$, by Theorem \ref{thm:T-AH} we then must have that 
\begin{align*}\sigma &= \Type(x,F_1^\perp /(\C[x]v+F_1) \\ &=(\mu'_1+\nu_1,\mu'_2+\nu_1,\mu'_2+\nu_2,\ldots)  \\
&=(\mu_1+\nu_1,\ldots,\mu_{\overline{m}-1}+\nu_{\overline{m}-1},\mu_{\overline{m}}-1+\nu_{\overline{m}-1},\mu_{\overline{m}}-1+\nu_{\overline{m}},\mu_{\overline{m}+1}+\nu_{\overline{m}},\ldots).  
\end{align*}
Comparing this to $\Type(x,V/\C[x]v)=\rho$, we notice that $\sigma$ is obtained from $\rho$ by reducing by $1$ the parts $\rho_{2\overline{m}-2}=\mu_{\overline{m}}+\nu_{\ovm-1}$ and $\rho_{2\ovm-1}=\la_{\ovm}=\mu_{\ovm}+\nu_{\ovm}$. \vspace{5pt}

Notice that if $\ovm=1$, then again by Theorem \ref{thm:T-AH} we get that $\sigma$ is obtained from $\rho$ by reducing by $1$ the part $\rho_1=\la_{1}=\mu_1+\nu_1$, while all the other parts stay the same. Observe that in either case
\begin{eqnarray}
b(\mu,\nu)-b(\mu',\nu) &=& 2N(\mu)+2N(\nu)+|\nu|-2N(\mu')-2N(\nu)-|\nu| \nonumber \\
&=&  2(N(\mu)-N(\mu'))=2(\ovm-1)=2\ovm-2. \label{eq:diff-b-mu}
\end{eqnarray}
We now divide this case into three subcases: $\ovm=1$, $\ovm>1$ and $\nu_{\ovm-1}>\nu_{\ovm}$, $\ovm>1$ and $\nu_{\ovm-1}=\nu_{\ovm}$. \vspace{5pt}

\subsubsection{$\ovm=1$}\label{subsub0} In this case, since $|\sigma|=|\rho|-1$, the only possibility is that $F_1\subset \C[x]v$, so that $\Type(x,V/(\C[x]v+F_1))=\Type(x,V/\C[x]v)=\rho$ and $\Type(x,F_1^\perp/(\C[x]v+F_1))$ is obtained from $\rho$ by removing a box in row $l=\la_1$, as in Proposition \ref{prop:maxl}. Since $F_1=\C v_1$ with $v_1=\alpha_1 v_{1,1}+\beta_1 v_{1,\la_1}^*$ and $v_1\in\C[x]v$, we have necessarily $\beta_1=0$ and $\alpha_1\neq 0$. It then follows from Proposition \ref{prop:maxl} that $l=\la_1$ without any other assumptions (this is because $\displaystyle \sum_{\substack{ i \in \Delta_{m}  \\  i \leq m}} \beta_i =\beta_1=0$). In this case, then we have
$$\cB^{(\mu,\nu)}_{(\mu',\nu)}=\{\C v_{1,1}\}\subset \P(\ker(x)\cap (\C[x]v)^\perp)$$
is a single point, which is indeed irreducible of dimension $2\ovm-2=2\cdot 1-2=0$. \vspace{5pt}

For the rest of Section \ref{sec:mu}, we assume that $\ovm>1$, which implies that $|\sigma|=|\rho|-2$, hence $F_1\not\subset \C[x]v$.
\subsubsection{$\nu_{\ovm-1}>\nu_{\ovm}$}\label{subsub1}
If $\nu_{\ovm-1}>\nu_{\ovm}$, then we have $\Lambda_m=\{\ovm\}$ (since $\mu_{\ovm}>\mu_{\ovm+1}$), and $m=\ovm$. Notice now that in Proposition \ref{prop:maxl} we cannot have $l=\mu_{m''}+\nu_m$ because $m''=\max\Delta_m+1\geq \ovm+1$, hence $\mu_{m''}+\nu_m\leq \mu_{\ovm +1}+\nu_m<\mu_{\ovm}+\nu_m$, and it then follows that the part of $\rho$ of size $l$ would not be $\rho_{2\ovm-2}$ nor $\rho_{2\ovm-1}$, which is impossible. Hence we have to have $l=\la_{\ovm}=\rho_{2\ovm-1}$, which implies by Proposition \ref{prop:maxl} that either $\nu_{\ovm}=0$ (in which case $\beta_{\ovm}=0$ by Remark \ref{rem:perp} ) or $\nu_{\ovm}\neq 0$ and $ \displaystyle \sum_{i \in \Delta_{m}  :~  i \leq m} \beta_i = 0$, which is the same as $\beta_{\ovm}=0.$
Since $\beta_{\ovm}=0$, we have $\alpha_{\ovm}\neq 0$, hence $\Gamma_m^\alpha\neq\varnothing\neq\Gamma_m^\beta$ and in Proposition \ref{prop:maxk} we obtain $k=\mu_m+\nu_{m'-1}$. Since the only possibility is that $k=\rho_{2\ovm-2}=\mu_{\ovm}+\nu_{\ovm -1}$, we get that $\nu_{m'-1}=\nu_{\ovm-1}$. Clearly $m'\leq \ovm$ because $\beta_{\ovm}=0$ and $\alpha_{\ovm}\neq 0$, thus $\nu_{m'-1}\geq \nu_{\ovm-1}$. Now, let $X\subset \P^{2\ovm-1}$ be the set of possible choices of the $v_1$ such that $\nu_{m'-1}>\nu_{\ovm-1}$. Then in terms of the homogeneous coordinates, $X$ is defined by the equations 
$$\alpha_{\ovm}=\alpha_{\ovm-1}=\ldots =\alpha_{m'}\text{ and }0=\beta_{\ovm}=\beta_{\ovm-1}=\ldots=\beta_{m'}$$
with $m'\leq\min\Delta_{\ovm-1}$. It is clear that this forms a closed subset of positive codimension of the set of all the $v_1$ such that $\beta_{\ovm}=0$ and $\alpha_{\ovm}\neq 0$. In conclusion, $\cB^{(\mu,\nu)}_{(\mu',\nu)}$ is an open subset of the set $Y=\{ \beta_{\ovm}=0;~\alpha_{\ovm}\neq 0\}\subset \P^{2\ovm-1}$. Since $Y$ is irreducible of dimension $2\ovm-2$, so is $\cB^{(\mu,\nu)}_{(\mu',\nu)}$.

\subsubsection{$\nu_{\ovm-1}\label{subsub2}=\nu_{\ovm}$}
In this case $\rho_{2\ovm-2}=\rho_{2\ovm-1}=\lambda_{\ovm}$, therefore, as in \ref{subsub1}, we have $l=\lambda_{\ovm}$. By Proposition \ref{prop:maxl}, there are then two possibilities. If $\nu_{\ovm}=\nu_{\ovm-1}=0$, we have from Remark \ref{rem:perp} that 
$ \displaystyle \sum_{i :~ \nu_i=0,~\mu_i>0} \beta_i= 0$. Otherwise, if $\nu_{\ovm}>0$, we have $\displaystyle \sum_{i \in \Delta_{m}:~  i \leq m} \beta_i = 0$. Now, our only choice for $k$ is also $k=\la_{\ovm}$. By Proposition \ref{prop:maxk} this is true when $\alpha_{\ovm}=0$ or $\beta_{\ovm}\neq 0$. If we are in the case $\alpha_{\ovm}\neq 0$ and $\beta_{\ovm}=0$, we have $\Gamma_m^\alpha\neq\varnothing\neq\Gamma_m^\beta$, hence $k=\mu_m+\nu_{m'-1}=\mu_{\ovm}+\nu_{\ovm}$, from which necessarily $\nu_{m'-1}=\nu_{\ovm}$. As in \ref{subsub1}, the set $X\subset \P^{2\ovm-1}$ where $\nu_{m'-1}>\nu_{\ovm}$ is a closed subset of positive codimension of the set $Y=\{\C v_1\in\P^{2\ovm-1}~|~\sum_{i\in I}\beta_i=0\}$ where 
$$ 
I=\begin{cases} \{i\in\Delta_m~|~i\leq m\} & \text{ if }\nu_{\ovm}>0, \\ 
\{i~|~\mu_i>0,~\nu_i=0\} & \text{ if }\nu_{\ovm}=0.
\end{cases}
$$
We also consider the set $Z=\{\C v_1\in Y~|~\alpha_i=0=\beta_i,~\forall i \in\Lambda_m\}$ which is also closed of positive codimension in $Y$. In conclusion, we have  
$$Y\setminus (X\cup Z)\subset \cB^{(\mu,\nu)}_{(\mu',\nu)}\subset Y\simeq \P^{2\ovm-2},$$ from which $\cB^{(\mu,\nu)}_{(\mu',\nu)}$ is indeed irreducible of dimension $2\ovm-2$.

\subsection{The case $\mu=\mu'$} We assume now $\nu'$ is obtained by removing a box from row $\overline{m}$ in $\nu$, while $\mu=\mu'$. In particular this implies that $\nu_{\overline{m}}>\nu_{\overline{m}+1}\geq 0$ and that $\ovm= \max\Delta_m\leq \max\Gamma_m$. By Theorem \ref{thm:T-AH} we then must have that 
\begin{align*}\sigma &= \Type(x,F_1^\perp /(\C[x]v+F_1) \\ &=(\mu_1+\nu'_1,\mu_2+\nu'_1,\mu_2+\nu'_2,\ldots)  \\
&=(\mu_1+\nu_1,\ldots,\mu_{\overline{m}}+\nu_{\overline{m}-1},\mu_{\overline{m}}+\nu_{\overline{m}}-1,\mu_{\overline{m}+1}+\nu_{\overline{m}}-1,\mu_{\ovm+1}+\nu_{\ovm+1},\ldots).  
\end{align*}
Comparing this to $\Type(x,V/\C[x]v)=\rho$, we notice that $\sigma$ is obtained from $\rho$ by reducing by $1$ the parts $\rho_{2\ovm-1}=\la_{\ovm}=\mu_{\ovm}+\nu_{\ovm}$ and $\rho_{2\overline{m}}=\mu_{\overline{m}+1}+\nu_{\ovm}$. In particular, since $|\sigma|=|\rho|-2$, we have $F_1\not\subset\C[x]v$. Observe that in this case
\begin{eqnarray}
b(\mu,\nu)-b(\mu,\nu') &=&2N(\mu)+2N(\nu)+|\nu|-2N(\mu)-2N(\nu')-|\nu'| \nonumber \\
 &=&2(N(\nu)-N(\nu'))+1 \nonumber\\
 &=&2\ovm-1.  \label{eq:diff-b-mu}
\end{eqnarray}
We now divide this case into two subcases: $\mu_{\ovm}>\mu_{\ovm+1}$ and $\mu_{\ovm}=\mu_{\ovm+1}$. 

\subsubsection{$\mu_{\ovm}=\mu_{\ovm+1}$}In this case we have that $\max\Gamma_m\geq \ovm +1>\max \Delta_m$. Notice that this implies that $\Gamma_m^\alpha=\varnothing$ because $\alpha_{\max\Gamma_m}=\alpha_{\ovm+1}=0$, hence by Proposition \ref{prop:maxk}, we have $k=\la_{\ovm}$. Also, since either $\mu_{\ovm}=0$ or $\max\Gamma_m>\max\Delta_m$, by Proposition \ref{prop:maxl} we get $l=\la_{\ovm}$. Notice that in this situation, $\rho_{2\ovm-1}=\rho_{2\ovm}=\la_{\ovm}=k=l$. If, as in \ref{subsub2}, we take $$Z=\{\C v_1\in\P^{2\ovm-1}~|~\alpha_i=0=\beta_i, \, \, \forall i \in\Lambda_m\},$$ which is a closed subset of positive codimension of $\P^{2\ovm-1}$, we then get 
$ \cB^{(\mu,\nu)}_{(\mu,\nu')}=\P^{2\ovm-1}\setminus Z$ is indeed irreducible of dimension $2\ovm-1$.

\subsubsection{$\mu_{\ovm}>\mu_{\ovm+1}$}

In this case we have that $\ovm=\max\Gamma_m=\max\Delta_m$ and $m''=\max\Delta_m+1=\ovm+1$. Notice that since $\rho_{2\ovm-1}=\lambda_{\ovm}>\rho_{2\ovm}=\mu_{\ovm+1}+\nu_{\ovm}$, we have to have 
$$
k=\rho_{2\ovm-1}=\lambda_{\ovm} \quad \text{and} \quad   l=\rho_{2\ovm}=\mu_{m''}+\nu_{\ovm}.
$$ 
The condition on $l$, by Proposition \ref{prop:maxl}, implies that $\displaystyle \sum_{ i \in \Delta_{m}:~  i \leq m} \beta_i \neq 0$. Using Proposition \ref{prop:maxk}, the condition on $k$ implies that one of the following is true: $\Gamma_m^\alpha=\varnothing$ (i.e. $\alpha_{\ovm}=0$), or $\Gamma_m^{\beta}=\varnothing$ (i.e. $\beta_{\ovm}\neq 0$), or $\Gamma_m^\alpha\neq\varnothing\neq\Gamma_m^{\beta}$ and $\nu_{m'-1}=\nu_{\ovm}$. Now, if we let 
$$
Y=\bigg\{\C v_1\in\P^{2\ovm-1} \bigg| \sum_{ i \in \Delta_{m}:~  i \leq m} \beta_i \neq 0,\text{ and }\beta_{\ovm}\neq 0 \bigg\},
$$ then $Y$ is an open irreducible subset of $\P^{2\ovm-1}$. Since we have $Y\subset  \cB^{(\mu,\nu)}_{(\mu,\nu')}\subset \P^{2\ovm-1}$, we obtain again that $\cB^{(\mu,\nu)}_{(\mu,\nu')}$ is irreducible of dimension $2\ovm-1$. \vspace{5pt}

This concludes the proof of Proposition \ref{prop:B-irred} and hence of Proposition \ref{prop:nested}. The following inequality is an immediate consequence: 
\begin{eqnarray} \label{eqn:weakbound}
\dim \cC_{(v,x)} \geq b(\mu,\nu). 
\end{eqnarray}

To conclude the proof of the main theorem, we will need to adapt a central object in Springer representations - the Steinberg Variety - to our settings.

\section{The Exotic Steinberg Variety and the Main Theorem} \label{section:steinbergvariety}
In this section we define the Exotic Steinberg Variety and present some of its properties that will enable us to complete the proof of Theorem \ref{mainthm}. First we give some background on the classical Steinberg Variety. \vspace{5pt}

Let $G=\Gl_n(\C)$, with Borel subgroup $B$ of upper triangular matrices, and let $\cN=\cN(\gl_n)$ be the nilpotent cone, the variety of all nilpotent $n \times n$ matrices. We have the Springer resolution of singularities
\begin{align*} & \pi: \widetilde{\cN}\twoheadrightarrow \cN;\qquad (F_\bullet,x)\mapsto x \\ \widetilde{\cN}&=\{(F_\bullet,x)\in G/B\times \cN ~|~ x(F_i)\subset F_{i-1},~\forall i=1,\ldots,n\}. \end{align*}
Then, we define the classical Steinberg variety
$$
 Z=\widetilde{\cN}\times _{\cN}\widetilde{\cN}=\{(F,F',x)\in G/B\times G/B\times \cN~|~ F,F'\in \pi^{-1}(x)\}.
 $$
Following \cite{Spa} and \cite{Ste}, the irreducible components of $Z$ can be parametrized in two ways. The first is by taking closures of the pre-images of the $G$-orbits on $G/B\times G/B$ under the projection $Z \twoheadrightarrow G/B \times G/B$; the parametrizing set is the Weyl group $S_n$ by the Bruhat lemma. The second way is as follows: given two standard tableaux $T, T'$ of shape $\lambda$ define $Z_{T, T'}$ be the set of all triples with $x$ a nilpotent of Jordan type $\lambda$,  $(\Type(x, F_i))_{i=1}^n = T$, $(\Type(x, F'))_{i=1}^n = T'$ (notice that the Jordan types of $x$ restricted to the spaces in the flag give an increasing sequence of partitions, which is the same thing as a standard tableau). Then the closures $\overline{Z_{T, T'}}$ comprise the irreducible components, and Spaltenstein and Steinberg independently showed that passing from one parametrization to the other is exactly given by the Robinson-Schensted Correspondence. We will construct an Exotic Robinson-Schensted Correspondence in Section \ref{sec:exoRS}. \vspace{5pt}

Let us now describe the exotic analogue of this construction.  We note here that all the techniques used below are classical and date back to Lusztig, Spaltenstein and Steinberg (and were explained to us by Anthony Henderson). Lemmas \ref{irrcomp}, \ref{dim} and \ref{irrcomp2} can  be found in \cite{kat}; Lemma \ref{equidimensional} can be found in \cite{kat5}. The proofs are very short so we include them for the reader's convenience. \vspace{5pt}

From Section \ref{sec:notation}, recall the symplectic flag variety $\cF(V)$, which using the embedding $\Sp_{2n}(\C)\hookrightarrow \Gl_{2n}(\C)$, can be realised as $\Sp_{2n}(\C)/(B \cap \Sp_{2n}(\C))$ (with $B$ a Borel subgroup of $\Gl_{2n}(\C)$).  Let $R^{+}$ be the set of positive roots of $Sp_{2n}(\C)$. Recall the resolution map $\pi:\widetilde{\fN}\to \fN$ of \eqref{eq:resol-nilcone} with fibres $\mathcal{C}_{(v, x)}$ over the point $(v,x)$.

\begin{definition}[(Exotic Steinberg Variety)]
The \emph{exotic Steinberg variety} is
$$ \fZ:=\widetilde{\fN}\times_{\fN}\widetilde{\fN}:=\{(F_\bullet,F'_\bullet,(v,x))\in \cF(V)\times\cF(V)\times \fN~|~F_\bullet,F'_\bullet\in\mathcal{C}_{(v,x)}\}.$$
\end{definition}

The Bruhat lemma says that the orbits of the $\Sp_{2n}$-action on $\cF(V)\times \cF(V)$ are in bijection with the Type C Weyl group $W(C_n)$. Using the embedding $\Sp_{2n}(\C)\hookrightarrow \Gl_{2n}(\C)$, we can identify $W(C_n)$ with a subset of the symmetric group $S_{2n}$:
$$ W(C_n)=\{w\in S_{2n}~|~w(2n+1-i)=2n+1-w(i)\}.$$
Note that this is a slightly different convention from the one used in Section \ref{sec:exoRS}.\vspace{5pt}

Given $w\in W(C_n)$, denote by $\cO_w$ the corresponding orbit in $\cF(V)\times\cF(V)$. Let $\theta: \fZ \longrightarrow \cF(V)\times\cF(V)$ and set $\fZ_{w}:=\theta^{-1}(\cO_{w})$. \\

\begin{definition}[(Flags in Relative Position)] \label{relative} Given two flags $F_{\bullet}, G_{\bullet} \in \cF(V)$, we say that $w(F_{\bullet}, G_{\bullet}) = w$, that is the two flags are in relative position $w \in W(C_n)$ if there is a basis $\{ v_{1}, \ldots, v_{2n} \}$ such that $\langle v_{i},v_{j} \rangle= \delta_{i+j, 2n+1}$ and for all $1 \leq i,j \leq 2n$, 
\begin{eqnarray*}
F_{i} &=& \C \{ v_1, \cdots, v_{i}\},   
\\ 
G_{j} &=& \C\{v_{w(1)}, \ldots, v_{w(j)}\}. 
\end{eqnarray*}

It is immediate that $w(F_\bullet,G_\bullet)=w$ if and only if $(F_\bullet,G_\bullet)\in\cO_w$.
\end{definition}

We can also parametrise the irreducible components of the $\fZ$ in two different ways. The first by elements of $W(C_n)$ and the second by pairs of bitableaux. We demonstrate the first parametrisation below and defer the second to Lemma \ref{irrcomp2}.

\begin{lemma} \label{irrcomp} 
The irreducible components of $\mathfrak{Z}$ are $\overline{\mathfrak{Z}_w}$ for $w \in W(C_n)$, and they all have the same dimension as $\mathfrak{N}$. 
\end{lemma}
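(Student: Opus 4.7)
The plan is to: (i) show each $\fZ_w$ is irreducible via an equivariant fibration, (ii) verify $\dim \fZ_w = \dim \fN$ for every $w$, and (iii) conclude from the stratification $\fZ = \bigsqcup_{w \in W(C_n)} \fZ_w$.

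For (i), I would consider the restriction $\theta|_{\fZ_w}: \fZ_w \to \cO_w$. This is $\Sp_{2n}$-equivariant and surjects onto the homogeneous space $\cO_w$, so it is automatically a Zariski-locally trivial fibre bundle. Its fibre over $(F_\bullet, F'_\bullet) \in \cO_w$ consists of pairs $(v, x) \in V \times \cS$ satisfying $v \in F_n \cap F'_n$, $x(F_i) \subseteq F_{i-1}$, and $x(F'_i) \subseteq F'_{i-1}$ for all $i$; nilpotency of $x$ is automatic since such $x$ is strictly upper triangular with respect to $F_\bullet$. The fibre is thus cut out by linear equations in $V \oplus \cS$ and is affine. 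With irreducible base and fibres, $\fZ_w$ is irreducible.

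For (ii), note first that $\cO_e$ is the diagonal in $\cF(V) \times \cF(V)$ and $\fZ_e$ is the image of the diagonal embedding $\widetilde{\fN} \hookrightarrow \fZ$, so $\dim \fZ_e = \dim \widetilde{\fN} = \dim \fN$ by the resolution property of $\pi$. For general $w$, one writes
\[
\dim \fZ_w = \dim \cO_w + \dim(F_n \cap F'_n) + \dim\bigl(\cS \cap \mathfrak{n}(F_\bullet) \cap \mathfrak{n}(F'_\bullet)\bigr),
\]
where $\mathfrak{n}(G_\bullet) = \{x \in \End(V) \mid x(G_i) \subseteq G_{i-1}\}$. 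Using the adapted symplectic basis from Definition \ref{relative}, the second summand is $|\{i \leq n : w^{-1}(i) \leq n\}|$, while the third counts non-fixed orbits of the involution $(a, b) \mapsto (2n+1-b, 2n+1-a)$ on the set $\{(a, b) : a < b,\; w^{-1}(a) < w^{-1}(b)\}$, with fixed points forced to zero by a sign coming from the skew-symmetry of $\langle\cdot,\cdot\rangle$. Combining this with $\dim \cO_w = n^2 + \ell(w)$, one verifies that the total equals $\dim \fN$ independently of $w$. This combinatorial balance is the main obstacle; conceptually it reflects that each $\fZ_w$ is a Lagrangian-type subvariety of the ambient space $\widetilde{\fN} \times \widetilde{\fN}$, exactly as the components of the classical Steinberg variety are conormal bundles to $G$-orbits on the product of flag varieties (cf.\ \cite{Ste, Spa}).

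Finally, $\fZ = \bigsqcup_w \fZ_w$ is a finite disjoint union of irreducible locally closed subvarieties, all of dimension $\dim \fN$. None can lie in the closure of another (that would force equality, contradicting disjointness), so the closures $\overline{\fZ_w}$ are precisely the irreducible components of $\fZ$.
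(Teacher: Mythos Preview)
Your proposal is correct and follows essentially the same approach as the paper: both analyse the fibration $\theta|_{\fZ_w}:\fZ_w\to\cO_w$, identify the fibre as the linear space $\{(v,x):v\in F_n\cap F'_n,\ x\in\cS\cap\mathfrak{n}(F_\bullet)\cap\mathfrak{n}(F'_\bullet)\}$, and reduce the equidimensionality to the same combinatorial identity (the paper packages it as $|S(w)|+|\{i\le n:w(i)\le n\}|=|R^+|-\ell(w)$, while you phrase it via orbits of the involution $(a,b)\mapsto(2n{+}1{-}b,2n{+}1{-}a)$). The only substantive difference is presentational: the paper writes down an explicit basis $\overline{E}_{ij}$ of $\cS$ to read off $|S(w)|$, whereas you argue more abstractly and add the conormal-bundle heuristic.
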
 
\begin{proof} 
It is clear that the union of $\mathfrak{Z}_w$, as $w$ runs over $W(C_n)$, is the entirety of $\mathfrak{Z}$. To show that they are the irreducible components, it suffices to check that they all have the same dimension, and that they are irreducible. We do this by analysing the restriction of $\theta$, namely $\theta_{w}: \mathfrak{Z}_w \rightarrow \mathcal{O}_w \subset \cF(V) \times \cF(V)$, which is a fibre bundle. \vspace{5pt}

For each $(F_{\bullet}, G_{\bullet}) \in \mathcal{O}_w$, choose a basis $\{ v_1, \ldots, v_{2n}\}$ as in Definition \ref{relative}, and a basis $E_{ij}$ of $\text{End}(\mathbb{C}^{2n})$ with $E_{ij} v_k = \delta_{jk} v_i$. Define the following matrices:
$$
\overline{E}_{ij}=
\begin{cases}
 E_{ij} + E_{2n+1-j, 2n+1-i} & \text{if} \quad 1 \leq i,j \leq n \\
E_{ij} - E_{2n+1-j, 2n+1-i}   & \text{otherwise}. 
\end{cases}
$$
Then the matrices $\{ \overline{E}_{ij} \; | \; 1 \leq i \leq 2n,~ 1 \leq j <2n+1-i \}$ constitute a basis for $\mathcal{S}$. Now for a fixed $w$ define: 
\begin{align*} 
S(w) &= \{ (i, j) \; | \; 1 \leq i < j < 2n+1-i,~ w^{-1}(i) < w^{-1}(j) \} \\ 
N(w) &= \text{span}\{ \overline{E_{ij}} \; | \; (i, j) \in S(w) \}. 
\end{align*} 

Below note that $x F_i \subset F_{i-1}$, $x G_i \subset G_{i-1}$ if and only if $x \in N(w)$. Now we have
\begin{align*} 
\theta_{w}^{-1}(F_{\bullet}, G_{\bullet}) 
=& \; \{ (v,x) \in \mathfrak{N} \; | \; v \in F_n \cap G_n,~ x F_{i} \subset F_{i-1}, ~x G_i \subset G_{i-1} \} \\ 
= & \; \{ (v,x) \in \mathfrak{N} \; | \; v \in \mathbb{C}\{v_1, \cdots, v_n\} \cap \mathbb{C}\{ v_{w(1)}, \cdots, v_{w(n)} \}, ~x \in N(w) \}
\end{align*}
and so 
$$
\dim(\theta_{w}^{-1}(F_{\bullet}, G_{\bullet})) = |S(w)| + | \{ i \leq n \; | \; w(i) \leq n \}| = |R^{+}| - \ell(w). 
$$
The above equality follows using the formula for the length $\ell(w)$ of a word in $W(C_n)$ as the number of negative roots made positive by the action of $w$. 
A simple calculation shows that $\text{dim}(\mathcal{O}_w) = |R^{+}| + \ell(w)$  and so it follows that $\text{dim}(\mathfrak{Z}_w) = 2 |R^{+}|=2n^2$. Hence all the $\mathfrak{Z}_w$'s have the same dimension. Since the fibres and base are irreducible, $\mathfrak{Z}_w$ is irreducible. This completes the proof.
\end{proof}

\begin{lemma} \label{dim} 
We have $ \dim(\cC_{(v,x)}) = b(\mu, \nu).$
\end{lemma}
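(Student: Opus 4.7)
The plan is to bracket $\dim(\cC_{(v,x)})$ between two matching bounds: the lower bound $\dim(\cC_{(v,x)}) \geq b(\mu,\nu)$ already recorded in \eqref{eqn:weakbound} as a consequence of Proposition \ref{prop:nested}, and an equal upper bound coming from the exotic Steinberg variety $\fZ$. This is the exotic analogue of the standard dimension estimate used in classical Springer theory, and it is precisely why the formalism of $\fZ$ was introduced at the start of this section.

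For the upper bound, I would consider the projection $\pi_\fN : \fZ \to \fN$ sending $(F_\bullet, F'_\bullet, (v,x)) \mapsto (v,x)$. Its fibre over any point $(v',x')$ is $\cC_{(v',x')} \times \cC_{(v',x')}$, and since any two points of $\mO_{(\mu,\nu)}$ are conjugate under $\Sp_{2n}$ (which identifies the corresponding exotic Springer fibres equivariantly), $\pi_\fN$ restricts to a Zariski-locally-trivial fibration over $\mO_{(\mu,\nu)}$. Consequently $\pi_\fN^{-1}(\mO_{(\mu,\nu)})$ is a locally closed subvariety of $\fZ$ of dimension $\dim(\mO_{(\mu,\nu)}) + 2\dim(\cC_{(v,x)})$. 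Since $\dim(\fZ) = 2n^2$ by Lemma \ref{irrcomp}, this yields
$$\dim(\cC_{(v,x)}) \leq n^2 - \tfrac{1}{2}\dim(\mO_{(\mu,\nu)}).$$

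To close the argument, I would invoke the orbit-dimension identity $\dim(\mO_{(\mu,\nu)}) = 2n^2 - 2b(\mu,\nu)$, which is established in \cite{AH} and can also be checked directly from the Achar--Henderson normal form recalled in Section \ref{sec:notation} by computing the $\Sp_{2n}$-centralizer of a representative $(v,x)$ of the orbit and subtracting from $\dim(\Sp_{2n}) = 2n^2+n$. Substituting into the previous display gives $\dim(\cC_{(v,x)}) \leq b(\mu,\nu)$, which combined with \eqref{eqn:weakbound} yields the desired equality.

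The main obstacle is the orbit-dimension formula itself: everything else is a short fibre-product and $\Sp_{2n}$-equivariance argument, but the numerical identity $\dim(\mO_{(\mu,\nu)}) = 2n^2 - 2b(\mu,\nu)$ is where the combinatorial invariant $b(\mu,\nu)$ actually enters the picture, so this input must come either from \cite{AH} or from an explicit stabiliser computation using the basis described in Example \ref{example:boxes}.
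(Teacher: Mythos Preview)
Your proposal is correct and follows essentially the same argument as the paper: both obtain the upper bound by projecting $\fZ$ onto $\fN$, using that the fibre over $\mO_{(\mu,\nu)}$ is $\cC_{(v,x)}\times\cC_{(v,x)}$ together with $\dim(\fZ)=\dim(\fN)=2n^2$ from Lemma~\ref{irrcomp}, and then combine with the lower bound \eqref{eqn:weakbound}. The only cosmetic difference is that the paper writes the orbit-dimension input as $\tfrac{1}{2}(\dim(\fN)-\dim(\mO_{(\mu,\nu)}))=b(\mu,\nu)$ rather than $\dim(\mO_{(\mu,\nu)})=2n^2-2b(\mu,\nu)$, which is the same identity.
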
 
\begin{proof} 
Let $\mathbb{O}_{(\mu, \nu)}$ be the $Sp_{2n}$-orbit of $(v,x)$. Consider the following subvariety  $\mathfrak{Z}_{\mu, \nu} := \phi^{-1}(\mathbb{O}_{(\mu, \nu)})$ of $\mathfrak{Z}$, where $\phi$ is the natural projection $\mathfrak{Z} \longrightarrow \mathfrak{N}$. We can realise $\mathfrak{Z}_{\mu, \nu}$ as a fibre bundle over $\mathbb{O}_{(\mu, \nu)}$, with fibres $\cC_{(v,x)} \times \cC_{(v,x)}$ and hence
$$
\dim \mathfrak{Z}_{\mu, \nu} = \text{dim}(\mathbb{O}_{(\mu, \nu)})+ 2 \text{dim}(\cC_{(v,x)}).
$$
Therefore we have 
$$
 \dim \mathbb{O}_{(\mu, \nu)}+ 2 \text{dim}(\cC_{(v,x)}) = \text{dim}(\mathfrak{Z}_{\mu, \nu}) \leq \dim (\mathfrak{Z}) = \dim(\mathfrak{N})
 $$
 and so 
 $$
 \dim(\cC_{(v,x)}) \leq \frac{\text{dim}(\mathfrak{N}) - \dim(\mathbb{O}_{(\mu, \nu)})}{2} = b(\mu, \nu). 
$$
Since we have already shown (see \eqref{eqn:weakbound}) that $\text{dim}(\cC_{(v,x)}) \geq b(\mu, \nu)$, the equality follows. \end{proof}

With this lemma, we can now prove that the irreducible components of the exotic Steinberg variety can be parametrised in another way. 

\begin{definition} \label{definition:irrcomp2}
For $S,T \in \SYB(\mu,\nu)$, two standard Young bitableaux, define a subvariety of $\mathfrak{Z}$ as follows:
$$ 
\mathfrak{Z}_{\mu, \nu}^{S, T} := \{ (F_{\bullet}, F'_{\bullet}, (v,x) ) \in \fZ \, | \, (v,x)\in\mathbb{O}_{(\mu,\nu)},~F_{\bullet} \in \Phi^{-1}(S), F'_{\bullet} \in \Phi^{-1}(T) \}. 
$$ 
\end{definition} 

\begin{lemma} \label{irrcomp2} 
The irreducible components of $\mathfrak{Z}$ can also be parametrised as $\overline{\mathfrak{Z}_{\mu, \nu}^{S,T}}$ for $S, T \in \text{SYB}(\mu, \nu)$ and $(\mu,\nu) \in \cQ_{n}$. 
 \end{lemma}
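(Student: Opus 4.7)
The plan is to mimic the strategy of Lemma \ref{irrcomp}, but instead of parametrising irreducible components of $\fZ$ via relative position of flags, I would parametrise via the map $\Phi$ of Definition \ref{def:Phi}. For each quadruple $(\mu,\nu,S,T)$ with $(\mu,\nu)\in\cQ_n$ and $S,T\in\SYB(\mu,\nu)$, I would first show that $\overline{\fZ_{\mu,\nu}^{S,T}}$ is an irreducible closed subvariety of $\fZ$ of top dimension $2n^2$, and then argue by a counting comparison that these exhaust the list of irreducible components already enumerated in Lemma \ref{irrcomp}.

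For irreducibility and dimension, I would fix a basepoint $(v,x)\in\mathbb{O}_{(\mu,\nu)}$ and use the fact that $\Sp_{2n}$ acts transitively on $\mathbb{O}_{(\mu,\nu)}$ while $\Phi$ is $\Sp_{2n}$-equivariant, so that the projection $\phi\vert_{\fZ_{\mu,\nu}^{S,T}}\colon\fZ_{\mu,\nu}^{S,T}\to\mathbb{O}_{(\mu,\nu)}$ is a fibre bundle with typical fibre
\[
\Phi^{-1}(S)\times\Phi^{-1}(T)\subseteq\cC_{(v,x)}\times\cC_{(v,x)}.
\]
By Proposition \ref{prop:nested}, each factor $\Phi^{-1}(S)$ and $\Phi^{-1}(T)$ is irreducible of dimension $b(\mu,\nu)$, so the fibre is irreducible of dimension $2b(\mu,\nu)$, and hence
\[
\dim\fZ_{\mu,\nu}^{S,T}=\dim\mathbb{O}_{(\mu,\nu)}+2b(\mu,\nu)=\dim\fN=2n^2
\]
by Lemma \ref{dim}. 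Since $\dim\fZ=2n^2$ by Lemma \ref{irrcomp}, the irreducible closed set $\overline{\fZ_{\mu,\nu}^{S,T}}$ is automatically an irreducible component of $\fZ$.

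For distinctness, I note that the quadruple $(\mu,\nu,S,T)$ can be recovered from any point $(F_\bullet,F'_\bullet,(v,x))\in\fZ_{\mu,\nu}^{S,T}$ via $(\mu,\nu)=\eType(v,x)$, $S=\Phi(F_\bullet)$, $T=\Phi(F'_\bullet)$, so the sets $\fZ_{\mu,\nu}^{S,T}$ are pairwise disjoint. Each is locally closed in $\fZ$ (as a preimage of a single point under a constructible map to the finite product $\cQ_n\times\prod_i\cQ_i\times\prod_i\cQ_i$), hence open and dense in its irreducible closure, so distinct quadruples yield distinct irreducible components.

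To complete the enumeration I would compare cardinalities: by Lemma \ref{irrcomp}, $\fZ$ has exactly $|W(C_n)|=2^n n!$ irreducible components, while the number of standard bitableaux of shape $(\mu,\nu)$ equals the dimension of the irreducible $W(C_n)$-module indexed by $(\mu,\nu)$, and summing the squares of these dimensions over $\cQ_n$ recovers $|W(C_n)|$. Hence the $\overline{\fZ_{\mu,\nu}^{S,T}}$ exhaust the components. The main technical subtlety I anticipate is verifying carefully that $\phi\vert_{\fZ_{\mu,\nu}^{S,T}}$ is a Zariski-locally trivial fibre bundle rather than merely a map with irreducible fibres of constant dimension; this should follow from realising it as the associated bundle $\Sp_{2n}\times^{\Sp_{2n}^{(v,x)}}(\Phi^{-1}(S)\times\Phi^{-1}(T))$, where $\Sp_{2n}^{(v,x)}$ denotes the stabiliser of the basepoint.
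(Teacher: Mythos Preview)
Your proposal is correct and follows essentially the same approach as the paper's own proof: both use the fibre bundle $\fZ_{\mu,\nu}^{S,T}\to\mathbb{O}_{(\mu,\nu)}$ with fibre $\Phi^{-1}(S)\times\Phi^{-1}(T)$, invoke Proposition \ref{prop:nested} for irreducibility and dimension of the fibres, and finish with the counting identity $\sum_{(\mu,\nu)}|\SYB(\mu,\nu)|^2=|W(C_n)|$ against Lemma \ref{irrcomp}. Your version is slightly more explicit about distinctness and about the Zariski-local triviality of the bundle, but the argument is the same.
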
 
\begin{proof} We have a fibre bundle map $\phi_{S,T}: \mathfrak{Z}_{\mu, \nu}^{S, T} \rightarrow \mathbb{O}_{(\mu, \nu)}$ with fibres isomorphic to $\Phi^{-1}(S) \times \Phi^{-1}(T)$, where $\Phi$ is as in Definition \ref{def:Phi}. It is clear that the varieties $\mathfrak{Z}_{\mu, \nu}^{S,T}$ are irreducible (since the fibres of $\phi_{S, T}$ are irreducible), and that the closures $\overline{\mathfrak{Z}_{\mu, \nu}^{S,T}}$ are distinct. By looking at the map $\phi_{S,T}$, combined with Proposition \ref{prop:nested}, it follows that they have the same dimension as the exotic Steinberg variety: $$2b(\mu, \nu) + \dim(\mathbb{O}_{(\mu, \nu)}) = \text{dim}(\mathfrak{N}).$$

It follows that $\overline{\mathfrak{Z}_{\mu, \nu}^{S,T}}$ are irreducible components of the exotic Steinberg variety. Notice that 
$$\displaystyle\sum_{(\mu, \nu) \in \cQ_{n}} |\SYB(\mu, \nu)|^2 = |W(C_n)|$$ 
because the set $\SYB(\mu,\nu)$ labels a basis for the irreducible representation of $W(C_n)$ corresponding to the bipartition $(\mu,\nu)$. From Lemma \ref{irrcomp} it follows that there are no other irreducible components which gives the result.  \end{proof}

\begin{lemma} \label{equidimensional} All irreducible components of $\mathcal{C}_{(v,x)}$ have maximal dimension. \end{lemma}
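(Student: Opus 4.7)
The plan is to argue by induction on $n$, using the projection $p : \mathcal{C}_{(v,x)} \to \P(\ker(x) \cap (\C[x]v)^{\perp})$, $F_\bullet \mapsto F_1$, that already appears in the proof of Proposition \ref{prop:nested}. The base case $n = 0$ is trivial; for the inductive step, I would stratify the target $\P(\ker(x) \cap (\C[x]v)^{\perp})$ by the value of the exotic type $\eType(v + F_1, x|_{F_1^\perp/F_1})$, so that the strata are precisely the irreducible subvarieties $\cB_{(\mu',\nu')}^{(\mu,\nu)}$ from Proposition \ref{prop:B-irred}, as $(\mu',\nu')$ ranges over the bipartitions obtained from $(\mu,\nu)$ by deleting one box. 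This yields the partition
$$
\mathcal{C}_{(v,x)} \;=\; \bigsqcup_{(\mu',\nu')} p^{-1}\bigl(\cB_{(\mu',\nu')}^{(\mu,\nu)}\bigr).
$$

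I would then show that each stratum preimage $p^{-1}(\cB_{(\mu',\nu')}^{(\mu,\nu)})$ is equidimensional of dimension $b(\mu,\nu)$. By Proposition \ref{prop:B-irred}, the base $\cB_{(\mu',\nu')}^{(\mu,\nu)}$ is irreducible of dimension $b(\mu,\nu) - b(\mu',\nu')$; the fibre of $p$ over any $F_1 \in \cB_{(\mu',\nu')}^{(\mu,\nu)}$ is $\mathcal{C}_{(v+F_1,\, x|_{F_1^\perp/F_1})}$, which by the inductive hypothesis is equidimensional of dimension $b(\mu',\nu')$; and the restriction of $p$ to this stratum preimage is a locally trivial fibre bundle by $\mathrm{Stab}_{\Sp_{2n}}(v,x)$-equivariance together with the constancy of the exotic type along the stratum. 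Summing dimensions gives equidimensionality of $p^{-1}(\cB_{(\mu',\nu')}^{(\mu,\nu)})$ of dimension $(b(\mu,\nu) - b(\mu',\nu')) + b(\mu',\nu') = b(\mu,\nu)$, as claimed.

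Finally, let $K$ be any irreducible component of $\mathcal{C}_{(v,x)}$. Irreducibility of $K$ together with the finite partition above forces exactly one intersection $K \cap p^{-1}(\cB_{(\mu',\nu')_0}^{(\mu,\nu)})$ to be dense in $K$; this intersection is irreducible and locally closed in $p^{-1}(\cB_{(\mu',\nu')_0}^{(\mu,\nu)})$, so it lies in some irreducible component $C$ of this preimage, whence
$$
K \;=\; \overline{K \cap p^{-1}(\cB_{(\mu',\nu')_0}^{(\mu,\nu)})} \;\subseteq\; \overline{C}.
$$
Since $\overline{C}$ is an irreducible closed subset of $\mathcal{C}_{(v,x)}$ and $K$ is an irreducible component, the maximality of $K$ forces $K = \overline{C}$, giving $\dim K = \dim C = b(\mu,\nu)$. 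The main technical point I would need to handle carefully is the locally trivial fibre bundle structure of $p$ over each stratum $\cB_{(\mu',\nu')}^{(\mu,\nu)}$: this follows from the action of $\mathrm{Stab}_{\Sp_{2n}}(v,x)$ being transitive on $\cB_{(\mu',\nu')}^{(\mu,\nu)}$, or more weakly from Zariski local triviality of the family of Springer fibres over it, and parallels the fibre bundle property exploited in the proof of Proposition \ref{prop:nested}.
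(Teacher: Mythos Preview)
Your inductive approach is different from the paper's (which deduces the result from Spaltenstein--Kato equidimensionality of orbit intersections with a fixed $\mathbb{V}^+$), but it has a genuine gap: the partition
\[
\mathcal{C}_{(v,x)} \;=\; \bigsqcup_{(\mu',\nu')} p^{-1}\bigl(\cB_{(\mu',\nu')}^{(\mu,\nu)}\bigr)
\]
over bipartitions $(\mu',\nu')$ obtained from $(\mu,\nu)$ by deleting one box is \emph{false}. As Example~\ref{ex:notnested} shows, there exist $F_1\in\P(\ker(x)\cap(\C[x]v)^\perp)$ for which $\eType(v+F_1,\,x|_{F_1^\perp/F_1})$ is \emph{not} a one-box deletion of $(\mu,\nu)$: for $(\mu,\nu)=((1,1),\varnothing)$ and $F_1=\C v$ one gets $(\varnothing,(1))$. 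Thus there are further strata in the target, and Proposition~\ref{prop:B-irred} says nothing about their dimension or irreducibility.

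This is not a cosmetic issue. In your final step you pick an irreducible component $K$ and argue that its generic stratum must be one of the $p^{-1}(\cB_{(\mu',\nu')}^{(\mu,\nu)})$, hence of dimension $b(\mu,\nu)$. But nothing in your argument excludes the possibility that $K$ lies entirely over one of the ``non-nested'' strata, where you have no dimension control; and a priori such a $K$ would have dimension strictly less than $b(\mu,\nu)$, which is exactly the situation the lemma is meant to rule out. To close this gap you would need to show that each non-nested stratum lies in the closure of the nested ones \emph{inside $\mathcal{C}_{(v,x)}$} (not merely in the base), and that is essentially the content of the main theorem --- which in the paper is proved \emph{using} Lemma~\ref{equidimensional}. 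So the proposed route is circular as stated. The paper avoids this by appealing to the Spaltenstein--Kato argument, which compares irreducible components of $\mathcal{C}_{(v,x)}$ with those of $\mathbb{O}_{(\mu,\nu)}\cap\mathbb{V}^+$ via two fibrations of a common variety $Y\subset\Sp_{2n}$, and invokes the known equidimensionality of the latter.
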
 

\begin{proof} In \cite{Spa77}, Spaltenstein proves that the components of Springer fibres for reductive groups have the same dimension. The proof is elementary, and has been adapted to the exotic setting; see Theorem 7.1 and Section 8 of \cite{kat5}. 
We now explain the setup. Fix a flag $F^0_{\bullet} \in \mathcal{F}(V)$ 
and define a subvariety of the exotic nilpotent cone: 
$$ \mathbb{V}^+ = \{ (u,y)\in\mathfrak{N} \; | \; u \in F^0_n; \; y (F^0_i) \subseteq F^0_{i-1} \text{ for all } i \}=\{ (u,y)\in\mathfrak{N}~|~F^0_\bullet\in\cC_{(u,y)}\}. $$

We define two maps 
$$\pi_1:\Sp_{2n} \longrightarrow \mO_{(\mu,\nu)} \quad \text{and} \quad \pi_2: \Sp_{2n} \longrightarrow \mathcal{F}(V)$$ 
given by 
$$\pi_1(g)=(g^{-1}v, g^{-1}xg) \qquad \pi_2(g)= g\cdot F^0_{\bullet}.$$
These are both fibre bundles: the fibres of $\pi_1$ are isomorphic to the stabilizer $Z$ of $(v,x)$ inside $\Sp_{2n}$, while the fibres of $\pi_2$ are Borel subgroups. 
Let $Y =\pi_1^{-1}(\mO_{\mu,\nu} \cap \mathbb{V}^+)= \pi_2^{-1}(\mathcal{C}_{(v,x)})$.
Since both $Z$ and Borel subgroups are irreducible, the preimages under $\pi_1$ (resp. $\pi_2$) of the irreducible components of $\mO_{\mu,\nu} \cap \mathbb{V}^+$ (resp. $\cC_{(v,x)}$) are the irreducible components of $Y$.
It follows that there is a bijection between the irreducible components $\{C_i~|~i\in I\}$ of  $\cC_{(v,x)}$ and the irreducible components $\{O_i~|~i\in I\}$ of $\mO_{\mu,\nu} \cap \mathbb{V}^+$. Hence, it also follows that, for each $i\in I$, 
\begin{equation}\label{eq:dimequal} 
\dim(C_i)+\dim(Z)=\dim(O_i)+\dim(B) 
\end{equation} 
where $B\subset \Sp_{2n}$ is a Borel subgroup. \vspace{5pt}

By Theorem 7.1 of \cite{kat5}, all the irreducible components of $\mO_{\mu,\nu} \cap \mathbb{V}^+$ (the notation used in \cite{kat5} is $\mathcal{O}\cap\mathbb{V}^+$ instead) have the same dimension, and so the result then follows from \eqref{eq:dimequal}. 
\end{proof}

We can now prove the main theorem. 

\begin{proof}[of Theorem \ref{mainthm}] Using Lemma \ref{dim}, we know that $\{ \overline{\Phi^{-1}(T)} \, | \, T \in \SYB(\mu,\nu) \}$ form a non-redundant set of irreducible components of $\cC_{(v,x)}$. It suffices to show that there are no other irreducible components. \vspace{5pt}

Suppose $C$ is another irreducible component of $\mathcal{C}_{(v,x)}$, and let 
$$C'=C\setminus\bigcup_{T \in \SYB(\mu,\nu)}\Phi^{-1}(T).$$
Then $C'$ is an irreducible subvariety of $\cC_{(v,x)}$ of maximal dimension by Lemma \ref{equidimensional}. 
Consider the subvariety of $\mathfrak{Z}$: 
$$
\mathfrak{Z}_{\mu, \nu}^{C,C} = \{ g\cdot(F_{\bullet}, G_{\bullet}, (v,x)) \; | \; F_{\bullet}, G_{\bullet} \in C', g \in Sp_{2n} \}.
$$

Since the stabilizer of $(v,x)$ in $\Sp_{2n}$ is connected, $\mathfrak{Z}_{\mu,\nu}^{C,C}$ is a fibre bundle over $\mathbb{O}_{(\mu, \nu)} $ with fibres isomorphic to $C'\times C'$ (both $C$ and the $\Phi^{-1}(T)$'s are left invariant by the stabilizer). Using Lemma \ref{equidimensional}, and the same argument from Lemma \ref{irrcomp2}, $\mathfrak{Z}_{\mu, \nu}^{C,C}$ is an irreducible subvariety of $\mathfrak{Z}$ of maximal dimension. Hence it must be contained in $\overline{\mathfrak{Z}_{\mu, \nu}^{S,T}}$ for some $S$ and $T$. This is a contradiction, since $\mathfrak{Z}_{\mu, \nu}^{C,C}$ and $\mathfrak{Z}_{\mu, \nu}^{S,T}$ are disjoint. 
\end{proof}

\section{Exotic Type C Robinson-Schensted Correspondence} \label{sec:exoRS}

As mentioned in Section \ref{section:steinbergvariety}, the classical Robinson-Schensted correspondence, which is a bijection between the symmetric group $S_n$ and pairs of standard Young tableaux of the same shape, was rediscovered in the geometry of the Steinberg variety due to Spaltenstein and Steinberg (\cite{Spa}, \cite{Ste}). In this section, we describe how the same techniques can be used to give a variant of the Robinson-Schensted correspondence in type C using the geometry of the exotic nilpotent cone.  In \cite{NRS17}, the sequel to this paper, we give an explicit combinatorial description of this bijection. \vspace{5pt}

We have shown in Lemmas \ref{irrcomp} and \ref{irrcomp2} that the irreducible components of $\fZ$ are parametrised in two different ways: one by elements of the Weyl group $W(C_n)$ and the other by pairs of standard Young bitableaux $(T,T')\in \SYB(\mu, \nu)$ as $(\mu,\nu)$ runs over all bipartitions of $n$.

By comparing the two parameterisations, we deduce the existence of an {\it Exotic Robinson-Schensted Correspondence}:

\begin{corollary} \label{cor:rsk} 
There is a bijection: 
$$ W(C_n) \longleftrightarrow \coprod_{(\mu,\nu)\in\cQ_n} \SYB(\mu,\nu) \times \SYB(\mu,\nu) $$
defined geometrically by $w \leftrightarrow (T,T')$ if and only if $\overline{\fZ_w}=\overline{\fZ_{(\mu,\nu)}^{T,T'}}$. 
\end{corollary}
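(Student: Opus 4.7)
The plan is to obtain the corollary as a direct consequence of Lemma \ref{irrcomp} and Lemma \ref{irrcomp2}. Both lemmas produce parametrisations of the set of irreducible components of the exotic Steinberg variety $\fZ$, and since the set of irreducible components of an algebraic variety is intrinsically determined, the two indexing sets must be in bijection via the natural identification of components.

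The key steps, in order, are the following. First, I invoke Lemma \ref{irrcomp} to obtain the non-redundant list $\{\overline{\fZ_w}\mid w\in W(C_n)\}$ of the irreducible components of $\fZ$. Second, I invoke Lemma \ref{irrcomp2} (which itself rests on Theorem \ref{mainthm} and the dimension calculations in Lemma \ref{dim}) to obtain the alternate non-redundant list $\{\overline{\fZ_{(\mu,\nu)}^{T,T'}}\mid (\mu,\nu)\in\cQ_n,\ T,T'\in \SYB(\mu,\nu)\}$ of the irreducible components of $\fZ$. Third, for each $w\in W(C_n)$ I define the image under the bijection to be the unique triple $((\mu,\nu),T,T')$ such that $\overline{\fZ_w}=\overline{\fZ_{(\mu,\nu)}^{T,T'}}$; existence and uniqueness of this triple both follow from the non-redundancy in the two lemmas.

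Bijectivity is then automatic. Injectivity holds because distinct $w,w'\in W(C_n)$ index distinct components by Lemma \ref{irrcomp}, and surjectivity holds because any $\overline{\fZ_{(\mu,\nu)}^{T,T'}}$ is an irreducible component of $\fZ$ and therefore equals $\overline{\fZ_w}$ for some $w$. As a sanity check, the cardinality identity $|W(C_n)|=\sum_{(\mu,\nu)\in\cQ_n}|\SYB(\mu,\nu)|^2$ is already implicit in the proof of Lemma \ref{irrcomp2}.

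The main obstacle is \emph{not} the existence of the bijection, which is essentially immediate once the two parametrisations of the irreducible components of $\fZ$ are in hand; rather, it is the lack of an \emph{explicit combinatorial description}. The map $w\mapsto (T,T')$ defined here is purely geometric: given a Weyl group element, one has no direct recipe (in the spirit of row insertion for the classical Robinson--Schensted correspondence) for reading off the associated bitableaux. Providing such a combinatorial algorithm, and verifying that it matches the geometric bijection, is deferred to the sequel \cite{NRS17} and lies outside the scope of this proof.
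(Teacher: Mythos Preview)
Your proof is correct and follows exactly the paper's approach: the corollary is presented there as an immediate consequence of comparing the two parametrisations of the irreducible components of $\fZ$ given by Lemmas \ref{irrcomp} and \ref{irrcomp2}, with no further argument supplied. The only minor inaccuracy is that Lemma \ref{irrcomp2} rests on Proposition \ref{prop:nested} rather than directly on Theorem \ref{mainthm}, but this does not affect the validity of your argument.
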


It is tempting to conjecture that the geometric correspondence of Corollary \ref{cor:rsk} should be given by a naive Type C version of the Robinson-Schensted algorithm, but in fact this is not the case (as we will see in the $n=2$ example below). See Section 3 of \cite{NRS17} for a complete combinatorial description of this bijection.

\subsection{The $n=2$ case}

If we identify the Weyl group $W(C_n)$ with  $\Z_2 \wr  S_n$, we can write its elements as permutation words in the letters $1,2,\ldots, n$ with some of the letters barred. The `naive' version of the type C Robinson-Schensted Correspondence uses the usual row bumping algorithm with the only difference being that the numbers with bars will be inserted in the second tableau of the bitableau and the number without bars will be inserted in the first tableau of the bitableau; see the below example for an illustration:
 $$
 2\bar{6}\bar{3}1854\bar{7} \longleftrightarrow \left(\left(\young(41,52,:8),\young(37,6)\right),\left(\young(51,64,:7), \young(28,3)\right)\right)
 $$

We now describe the exotic Robinson-Schensted correspondence from Corollary \ref{cor:rsk} in the case of $n=2$, to point out that it is indeed different from the `naive' Type C case that we just explained. \vspace{5pt}

We let $s=\bar{1}2$ and $t=21$ be generators for the Weyl group $W(C_2)$. 

\begin{eqnarray*}
W(C_2) 			&\longleftrightarrow&  \coprod_{(\mu,\nu)\in\cQ_2}\SYB(\mu,\nu)\times\SYB(\mu,\nu)				\\
\text{id}=12 		&\longleftrightarrow& \left(\left(\young(21); - \right),\left(\young(21); - \right)\right) 				\\
s=\bar{1}2			&\longleftrightarrow& \left(\left(\young(2); \young(1) \right),\left(\young(2); \young(1)\right)\right)  	\\
t=21				&\longleftrightarrow& \left(\left(\young(1); \young(2) \right),\left(\young(1); \young(2)\right)\right)   	\\
ts=\bar{2}1		&\longleftrightarrow& \left(\left(\young(1); \young(2) \right),\left(\young(2); \young(1)\right)\right) 		\\  
st=2\bar{1}		&\longleftrightarrow& \left(\left(\young(2); \young(1) \right),\left(\young(1); \young(2)\right)\right) 		\\  
sts=\bar{2}\bar{1}	&\longleftrightarrow& \left(\left(-; \young(12) \right),\left(-; \young(12)\right)\right)			 		\\  
tst=1\bar{2}		&\longleftrightarrow& \left(\left(\young(1,2); - \right),\left(\young(1,2); - \right)\right) 		 		\\  
stst=\bar{1}\bar{2}	&\longleftrightarrow& \left(\left(-; \young(1,2) \right),\left(-; \young(1,2)\right)\right)	
\end{eqnarray*}

We now illustrate our method for computing this geometric correspondence with a couple of examples. We will fix a point in the orbit of the exotic nilpotent given by a certain bipartition $(\mu,\nu)$ and, for two given bitableaux $T$, $T'$ we will find flags that are generic within the corresponding exotic Springer fibres. We will then obtain an element of the Weyl group by looking at the relative position of the two flags (i.e. the $\Sp_{2n}$-orbit passing through them). To compute the relative position of the two flags, we compute dimensions of the intersections of the spaces in the flags. \vspace{5pt}

For our purposes, it will help us to identify the Weyl group $W(C_n)$ as signed permutations of the set $\{1,\ldots ,n\} \cup \{\bar{1}, \ldots, \bar{n}\}$. For example, if $n=3$ and $w=\bar{2}13$ then we understand this as $w(1)=\bar{2}, w(2)=1$ and $w(3)=3$, and so $w(\bar{1})=2, w(\bar{2})=\bar{1}$ and $w(\bar{3})=\bar{3}$. For a general $n$, we define $s_0=\bar{1}2\ldots n$ and $s_i$ to be the standard generators for $S_n$ for $1 \leq i \leq n$. Thus a presentation for $W(C_n)$ with these generators is given by: 
$$
\langle s_0, s_1, \ldots, s_{n-1} \, | \, s_{i}^2=1, (s_{0}s_{1})^{4}=1, (s_{i}s_{i+1})^3=1, \, \text{for} \, \, i \geq 1, (s_{i}s_{j})^2=1, \, \text{for} \,  |i-j| > 1 \rangle,
$$
and we embed this in $S_{2n}$ via the map $\iota$ as follows:
$$ 
\iota(s_0)=(n,n+1) \quad \text{and} \quad  \iota(s_i)=(n-i,n-i+1)(n+i,n+i+1) \quad \text{for} \, \, 1 \leq i \leq n-1,
$$
where we use cycle notation for the transpositions of $S_{2n}$. \vspace{5pt}

We realise the image of $\iota$ by permutation matrices, where the columns of the matrix left to right are labelled $n(n-1)\cdots 1\bar{1}\bar{2}\cdots \bar{n}$ and same with the rows from top to bottom. Then $w=w_1w_2\cdots w_n$ describes the permutation given by the first $n$ columns, and the other columns are obtained by symmetry.

\begin{example} 
Consider the generic orbit that corresponds to the tableau $(\young(21); -)$. Since the exotic Springer fibres consists of a single point in this case, the relative position between two generic points trivially corresponds to the identity permutation. Namely: 
$$
\text{id}=12 \longleftrightarrow \left(\left(\young(21); - \right),\left(\young(21); - \right)\right) 		
$$ 
\end{example}

\begin{example} \label{example:fibre1} 
Suppose $F_{\bullet}$ is a generic point in the fibre $\Phi^{-1}(\left(\young(1),\young(2)\right))$. In this case the space $V$ is pictured as follows:
\begin{center}
\begin{tikzpicture}[scale=0.5]
\draw[-,line width=2pt] (0,2) to (0,-2); 
\draw (-1,0.5) -- (1,0.5) -- (1,1.5) -- (-1,1.5) -- (-1,0.5);
\node[right] at (-1.1,1){\tiny{$v_{11}$}}; \node[left] at (1.1,1){\tiny{$v_{12}$}};
\draw (-1,-0.5) -- (1,-0.5) -- (1,-1.5) -- (-1,-1.5) -- (-1,-0.5);
 \node[right] at (-1.1,-1){\tiny{$v_{12}^{*}$}}; 
 \node[left] at (1.1,-1){\tiny{$v_{11}^{*}$}}; 
 \end{tikzpicture} 
 \end{center} 
 with $v=v_{11}$. A generic point in $\ker(x) \cap (\C[x]v)^{\perp}$ has the form $v_1=\alpha_{1}v_{11}+\beta_{1}v_{12}^{*}$ with $\alpha_1, \beta_1 \in \C$. Since in this case we must have $\eType(v+F_1, x_{|F_{1}^{\perp}/F_{1}})=\left(\yng(1);\varnothing\right)$, we require $\beta_1 \neq 0$. Since $v \in F_2$, a generic point in the fibre has the form: 
 \begin{eqnarray*}
  F_1 	&=& \C\{\alpha_{1}v_{11}+\beta_{1}v_{12}^{*}\}, \\ 
  F_2	&=& \C\{v_{11}, v_{12}^{*} \},  \\ 
  F_3	&=& \C\{v_{11}, v_{12}^{*}, \alpha_{1}v_{12} + \beta_{1}v_{11}^{*} \}. 
  \end{eqnarray*} 
  Let $F_{\bullet}'$ be another generic point in the fibre. We have the intersection and permutation matrices recording the relative position of $F_{\bullet}$ with respect to $F_{\bullet}'$: 

  \begin{center} \begin{tabular}{c|cccc}  		
  		& $F_1$	& $F_2$	& $F_3$ 	& $F_4$ 		\\ 
		\hline 
		$F_1'$ 	& $0$ 	& 	$1$	& 	$1$	& 	$1$		\\ 
		$F_2'$ 	& $1$	& 	$2$	& 	$2$	& 	$2$		 \\  
		$F_3'$ 	& $1$	& 	$2$	& 	$2$	& 	$3$ 		 \\ 
		$F_4'$ 	& $1$	& 	$2$	& 	$3$	& 	$4$		 \\ 
		\end{tabular} \quad $\leadsto$ \quad  
		\begin{tabular}{c|cccc}  				
		& $2$		& $1$	& $\bar{1}$ & $\bar{2}$ 				\\ 
		\hline 
		$2$ 		& 	 	& 	$1$	& 		& 			\\ 
		$1$ 		& 	$1$	& 		& 		& 			 \\   
		$\bar{1}$ 	& 		& 		& 		& 	 $1$	 \\ 
		$\bar{2}$ 	& 		& 		& 	$1$	& 			 \\ 
		\end{tabular}
\end{center} 
which yields the permutation $w(F_{\bullet}, F_{\bullet}') = 21$. The permutation matrix $(b_{ij})$ is obtained from the intersection matrix $(a_{ij})$ by the formula: $b_{ij}=a_{i-1,j-1}+a_{ij}-a_{i-1,j}-a_{i,j-1}$ for all $1 \leq i,j \leq 2n$. Therefore under the exotic Robinson-Schensted correspondence, we have:  
$$ 
21 \longleftrightarrow \left(\left(\young(1);\young(2)\right),\left(\young(1);\young(2)\right)\right). 
$$ 
The other cases can be computed in a similar way.
\end{example}

\section{Exotic Springer Fibres of Dimension Two}\label{sec:dimension2}

In this section we give a description of the irreducible components in the case where the exotic Springer fibres has dimension two. We show that, in these cases, the components are $\mathbb{P}^1$ bundles over $\mathbb{P}^1$, and classify the Hirzebruch surfaces. In \cite{L86}, Lorist gives a description of the irreducible components for Springer fibres with dimension two in types A, D and E; this section is an extension of his results to the exotic case. In \cite{F86} and \cite{SW12}, Fung and Stroppel-Webster give a combinatorial approach to the structure of a Springer fibre for a two-row nilpotent in type A. We expect that the same techniques can be used to give a combinatorial approach to exotic Springer fibres and the present section is a step in that direction (but we only look at the case where the exotic Springer fibre has dimension two).   \vspace{5pt}

We will use the map $\Phi$ from Definition \ref{def:Phi} and the dimension formula from Definition \ref{def:bdim}. Using the above mentioned dimension formula, if $b(\mu, \nu)=2$, then either:

\begin{enumerate} 
\item $|\nu| = 0$, and $\displaystyle \sum_{i \geq 1} (i-1) \lambda_i = 1$. The second condition implies that $\lambda_2 = 1$, $\lambda_i = 0$ for $i \geq 3$. Hence $\mu = (n-1, 1)$ and $\nu = 0$. \item $|\nu|=2$, and $ \displaystyle\sum_{i \geq 1} (i-1) \lambda_i = 0$. The second condition implies that $\lambda_i = 0$ for $i \geq 2$. Hence $\nu = (2)$, and $\mu = (n-2)$. 
\end{enumerate}

We remark that the case where $(\mu,\nu)$ is a pair of arbitrary one-row partitions will be dealt with in full generality in the forthcoming article \cite{SW}. \vspace{5pt}

For what follows we introduce the following notation. Suppose $F_{\bullet} \in \cC_{(v,x)}$, where $(v,x)$ has exotic type $(\mu,\nu)$. Then the flag $F_{\bullet}/F_1:=(0 \subseteq F_2/F_1 \subseteq \cdots \subseteq F_1^{\perp}/F_1)$ obtained from $F_{\bullet}$ is a point in the fibre $\cC_{(v+F_1 , F_{1}^{\perp}/F_{1})}$ corresponding to the degree $n-1$ exotic nilpotent cone.   \vspace{5pt}

\textbf{Case 1.} For $2\leq i\leq n$, let $T_i^n$ be the bitableau of shape $(\mu,\emptyset)$, where $\mu = (n-1,1)$ with $\{ 1,2, \cdots, i-1, i+1, \cdots, n \}$ in the first row, and $i$ in the second row. Let $C_{n}^{i} = \Phi^{-1}(T_n^i)$, and let its closure $\overline{C_n^i}$ be the irreducible component of $\cC_{(v,x)}$ corresponding to $T_i^n$.  \vspace{5pt}

\begin{proposition} \label{cni} We have $C_n^i \simeq \mathbb{P}^2 \backslash pt$. \end{proposition}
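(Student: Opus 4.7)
The plan is to apply the projection map $p\colon C_n^i \to \P(\ker(x) \cap (\C[x]v)^\perp)$, $F_\bullet \mapsto F_1$, as used in the proof of Proposition \ref{prop:nested}. Since $(\mu,\nu) = ((n-1,1),\varnothing)$, we have the normal basis $\{v_{1,j}\}_{j=1}^{n-1} \cup \{v_{2,1}\}$ together with their duals, and $v = v_{1,n-1}+v_{2,1}$. A direct check gives
\[
\ker(x) \cap (\C[x]v)^\perp \;=\; \Span\bigl\{v_{1,1},\; v_{2,1},\; v_{1,n-1}^* - v_{2,1}^*\bigr\},
\]
so the ambient projective space is $\P^2$. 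The strategy splits depending on where the box labeled $n$ lies in $T_n^i$: in the second row of the first tableau when $i=n$ (so $\overline{m}=2$, and the reduced shape is $(\mu',\nu') = ((n-1),\varnothing)$), and in the first row when $i<n$ (so $\overline{m}=1$, with reduced shape $((n-2,1),\varnothing)$).

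For the case $i=n$, we are in the regime of Section \ref{sec:mu} with $\overline{m}>1$ and $\nu_{\overline{m}-1}=\nu_{\overline{m}}$. Unwinding the data there: $X=\varnothing$ (since $\nu=\varnothing$ precludes $\nu_{m'-1}>0$), $Y\simeq\P^2$ is cut out inside $\P^3$ by the perpendicularity relation $\beta_1+\beta_2=0$, and $Z$ reduces to the single point $\C v_{1,1}$. Hence $\im(p) = \cB^{(\mu,\nu)}_{(\mu',\nu')} = Y\setminus Z \simeq \P^2 \setminus pt$. By the isomorphism $\alpha$ in the proof of Proposition \ref{prop:nested}, each fibre of $p$ is isomorphic to $\Phi^{-1}(T')$ inside the exotic Springer fibre of type $((n-1),\varnothing)$, where $T'$ is $T_n^n$ with the box labeled $n$ removed; since $b((n-1),\varnothing)=0$, this fibre is a single point. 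Therefore $p$ is a fibre bundle with one-point fibres, hence an isomorphism, and $C_n^n \simeq \P^2\setminus pt$.

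For $i<n$, the case $\overline{m}=1$ analysis of Section \ref{sec:mu} forces $F_1 \subset \C[x]v$, so $\im(p) = \{\C v_{1,1}\}$ is a single point. The fibre over this point is canonically identified with $C_{n-1}^i = \Phi^{-1}(T_{n-1}^i)$ inside the exotic Springer fibre of type $((n-2,1),\varnothing)$, which is precisely the setup of the current proposition with $n$ replaced by $n-1$. Inducting on $n$, with base case $n=i$ handled above, we get $C_n^i \simeq C_{n-1}^i \simeq \P^2\setminus pt$.

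The main obstacle is in the $i=n$ case: Section \ref{sec:mu} only provides the two-sided inclusions $Y\setminus (X\cup Z) \subseteq \cB^{(\mu,\nu)}_{(\mu',\nu')} \subseteq Y$, so one must verify that the single point $\C v_{1,1}\in Z$ is genuinely excluded. This follows because $F_1 = \C v_{1,1} \subset \C[x]v$, so by the $\overline{m}=1$ analysis this $F_1$ yields reduced exotic type $((n-2,1),\varnothing)$ rather than $((n-1),\varnothing)$, and hence is not in the image of $p$.
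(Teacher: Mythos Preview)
Your approach is genuinely different from the paper's. The paper argues directly that for $F_\bullet\in C_n^i$ the spaces $F_1,\ldots,F_{n-i}$ are forced to equal $\C\{x^{n-2}v,\ldots,x^{n-k-1}v\}$, the spaces $F_{n-i+2},\ldots,F_n$ are determined by successively adjoining $x^{n-k}v$, and only $F_{n-i+1}$ is free to vary in a $\P^2$ minus a point. This explicit parametrisation by $F_{n-i+1}$ is then reused verbatim in Propositions~\ref{P1bundle}--\ref{Sigma-1} to compute closures and intersections. Your inductive route through the projection $p$ and the $\cB$-analysis of Section~\ref{sec:mu} is tidy and conceptually aligned with the general machinery, but it does not directly yield that explicit description, so you would need extra work to feed into the later propositions.

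There is, however, a genuine gap in your base case $i=n$ when $n=2$. You claim $Z=\{\C v_{1,1}\}$, but for $n=2$ one has $\lambda=(1,1)$, so $\Lambda_m=\{1,2\}$ and the defining conditions $\alpha_1=\alpha_2=\beta_1=\beta_2=0$ give $Z=\varnothing$ in projective space. Moreover your exclusion argument ``$F_1=\C v_{1,1}\subset\C[x]v$'' fails here, since $\C[x]v=\C(v_{1,1}+v_{2,1})$ does not contain $v_{1,1}$; in fact $\C v_{1,1}$ \emph{does} lie in $\cB^{((1,1),\varnothing)}_{((1),\varnothing)}$. The correct excluded point when $n=2$ is $\C v$ itself: if $F_1=\C v$ then $v+F_1=0$ and $\eType(v+F_1,0)=(\varnothing,(1))$, not $((1),\varnothing)$. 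Since your induction for every $i$ bottoms out at $C_i^i$, and the case $i=2$ is precisely $C_2^2$, this case must be handled separately (or your identification of the excluded point made uniform: it is always the unique line $F_1\subset\ker(x)\cap(\C[x]v)^\perp$ with $F_1\subset\C[x]v$, namely $\C x^{n-2}v$, which equals $\C v_{1,1}$ only for $n\geq 3$ and equals $\C v$ for $n=2$). For $n\geq 3$ your argument is correct as written.
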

\begin{proof}Let $F_\bullet\in C_n^i$, then for $1\leq k\leq n-i$, we have 
$$\eType(v+F_k, x|_{F_k^{\perp}/F_k})=((n-k-1,1),\emptyset).$$
Hence, for $k=1$, this implies that $(x|_{F_1^{\perp}/F_1})^{n-2} (v+F_1) = 0$, which means that $x^{n-2}v\in F_1$, but since $x^{n-2}v\neq 0$, we have that $F_1=\C\{x^{n-2}v\}$. For $k>1$, recursively, we obtain that $x^{n-k-1}v\in F_k$, hence
$$ F_k=\C\{x^{n-2}v, \ldots, x^{n-k-1}v\}~\qquad \text{for} \quad 1\leq k\leq n-i.$$
Now, since $v\in F_n$, we have $x^j v\in F_{n-j}$, in particular $x^{i-2}v\in F_{n-i+2}$, but since
$$ 
\eType(v+F_{n-i+1}, x|_{F_{n-i+1}^{\perp}/F_{n-i+1}})=((i-1),\emptyset),
$$
we have that $(x|_{F_{n-i+1}^{\perp}/F_{n-i+1}})^{i-2}(v+F_{n-i+1})\neq 0$, i.e. $x^{i-2}v\not\in F_{n-i+1}$. \vspace{5pt}

For simplicity of notation, let $V^j=\C\{x^{n-2}v, \ldots, x^{j}v\}$, $j=i-1,i-2$. From the above it follows that
$$ F_{n-i+1}\subset V^{i-2}+\ker(x)\cap (\C v)^\perp,\qquad \text{but} \quad F_{n-i+1}\neq V^{i-2}.$$
For $n-i+1<k\leq n$, since $x^{n-k}v\in F_k\setminus F_{k-1}$, we have that $F_k=F_{k-1}\oplus \C\{x^{n-k}v\}$.
So, any flag $F_\bullet\in C_n^i$ is determined by the choice of $F_{n-i+1}$ and so the map 
$$
q: C_n^i \rightarrow \mathbb{P}\left( (V^{i-2}+ \ker(x)\cap (\C v)^\perp)/V^{i-1}\right) \setminus V^{i-2}
$$ 
which sends $q(F_{\bullet}) = F_{n-i+1}/V^{i-1}$, is an isomorphism of varieties and so the conclusion follows.
\end{proof}

\begin{proposition} \label{P1bundle} 
We have $\overline{C_n^i}$ is a $\mathbb{P}^1$-bundle over $\mathbb{P}^1$. 
\end{proposition}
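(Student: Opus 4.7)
The plan is to identify $\overline{C_n^i}$ with the blow-up of $\P^2$ at the missing point $p := V^{i-2}/V^{i-1}$; since $\mathrm{Bl}_p\P^2$ is the first Hirzebruch surface $\F_1 = \P(\cO_{\P^1}\oplus \cO_{\P^1}(-1))$, and the linear projection away from $p$ exhibits $\F_1$ as a $\P^1$-bundle over $\P^1$, this will give the required statement.

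First I would describe the flags appearing in $\overline{C_n^i}\setminus C_n^i$. From the proof of Proposition \ref{cni}, $C_n^i$ is parameterised by the choice of the line $F_{n-i+1}/V^{i-1}$ in the three-dimensional quotient $(V^{i-2}+\ker(x)\cap(\C v)^\perp)/V^{i-1}$, omitting the point $p$; the higher $F_k$ are then determined, in particular $F_{n-i+2}=F_{n-i+1}+\C x^{i-2}v$. For any $w \in \ker(x)\cap(\C v)^\perp$ not lying in $V^{i-2}$, consider the one-parameter family $F_{n-i+1}^{(t)} = V^{i-1} + \C(x^{i-2}v + tw)$ for $t\neq 0$: as $t\to 0$, $F_{n-i+1}^{(t)} \to V^{i-2}$ while $F_{n-i+2}^{(t)} = V^{i-2} + \C w$ remains unchanged, so the limit flag has $F_{n-i+1}=V^{i-2}$ and $F_{n-i+2}=V^{i-2}+\C w$. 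Hence the closure adjoins exactly a $\P^1$ of new flags, parameterised by the direction $F_{n-i+2}/V^{i-2}$ in the two-dimensional quotient $(\ker(x)\cap(\C v)^\perp+V^{i-2})/V^{i-2}$.

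Next I would define a morphism $\pi: \overline{C_n^i}\to\P^1$ by $\pi(F_\bullet) = F_{n-i+2}/V^{i-2}$, taking values in the projectivisation of the two-dimensional quotient above. On the open stratum $C_n^i\simeq\P^2\setminus\{p\}$ this agrees with the linear projection of $\P^2$ away from $p$, while on the adjoined $\P^1$ it is the tautological identification; thus $\pi$ is a well-defined surjective morphism. Its fibre over a line $\ell \subset \P^2$ through $p$ is the disjoint union of $\ell\setminus\{p\}$ (inside $C_n^i$) and the single added flag recording the tangent direction of $\ell$ at $p$, and so each fibre is isomorphic to $\P^1$.

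The main obstacle is upgrading this fibrewise $\P^1$-description to an honest $\P^1$-bundle. For this I would construct an explicit morphism $\overline{C_n^i}\to \mathrm{Bl}_p\P^2$, sending a flag in $C_n^i$ to its corresponding point of $\P^2\setminus\{p\}$ and a flag on the exceptional locus to the point of the exceptional divisor indexed by $F_{n-i+2}/V^{i-2}$. The explicit parameterisation above makes this morphism bijective on closed points; as the source is an irreducible projective surface (two-dimensional by Lemma \ref{equidimensional}) and the target $\mathrm{Bl}_p\P^2$ is smooth, Zariski's Main Theorem gives the isomorphism $\overline{C_n^i}\simeq \F_1$, and $\pi$ is then identified with the standard Hirzebruch $\P^1$-bundle structure on $\F_1$.
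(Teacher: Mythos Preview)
Your approach is correct in outline and in fact yields more than the paper proves here: you obtain $\overline{C_n^i}\simeq\Sigma_{-1}$ directly, whereas the paper establishes only the $\mathbb{P}^1$-bundle structure in this proposition and defers the Hirzebruch-surface identification to the next one (Proposition~\ref{Sigma-1}, argued instead via an explicit rank-two vector bundle). The paper's route for the present statement is more elementary: it notes that the containments
\[
V^{i-2}\subset F_{n-i+2}\subset V^{i-2}+\ker(x)\cap(\mathbb{C}v)^\perp
\]
(together with the conditions fixing all the other $F_k$) are closed and hold on $C_n^i$, hence on all of $\overline{C_n^i}$; the map $\eta(F_\bullet)=F_{n-i+2}/V^{i-2}$ is then a well-defined morphism on $\overline{C_n^i}$, surjects onto $\mathbb{P}^1$, and for fixed $F_{n-i+2}$ the only remaining freedom is $F_{n-i+1}\in\mathbb{P}(F_{n-i+2}/V^{i-1})\simeq\mathbb{P}^1$.

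There is, however, a gap in your argument as written. Your one-parameter family shows that a certain $\mathbb{P}^1$ of flags lies in the boundary, but it does \emph{not} show that these are the only boundary points; your piecewise definition of the morphism to $\mathrm{Bl}_p\mathbb{P}^2$ then presupposes precisely the closure description you are trying to establish, so as stated it is a set-map rather than a morphism of varieties. The fix is exactly the paper's closed-condition observation: once you know the containments above hold on all of $\overline{C_n^i}$, the assignment $F_\bullet\mapsto\bigl(F_{n-i+1}/V^{i-1},\,F_{n-i+2}/V^{i-2}\bigr)$ is a globally defined morphism into $\mathbb{P}^2\times\mathbb{P}^1$ whose image lies in the blow-up (since it does on the dense open $C_n^i$). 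Each fibre is then a single flag, and your Zariski Main Theorem argument goes through cleanly. With this correction the limit calculation becomes redundant, or at most a check of surjectivity onto the exceptional divisor.
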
 
\begin{proof} 
Suppose that $F_{\bullet} \in C_n^i$; from the proof of Proposition \ref{cni}, using the same notation, we have that 
$$V^{i-2}=V^{i-1}\oplus \C\{x^{i-2}v\}\subset F_{n-i+1}\oplus \C\{x^{i-2}v\}\subset V^{i-2}+ \ker(x)\cap (\C v)^\perp.$$
Since $F_{n-i+1}\oplus \C\{x^{i-2}v\}=F_{n-i+2}$, it follows that 
\begin{equation}\label{eq:F2}
V^{i-2}\subset F_{n-i+2}  \subset V^{i-2}+ \ker(x)\cap (\C v)^\perp.
\end{equation}
We also have
\begin{align}\label{eq:Fk1}  
F_k=\C\{x^{n-2}v, \ldots, x^{n-k-1}v\} &  \qquad 1\leq k\leq n-i, \\
\label{eq:Fk} F_k=F_{k-1}\oplus \C\{x^{n-k}v\} & \qquad n-i+3\leq k\leq n.
\end{align}
These are closed conditions and so it follows that \eqref{eq:F2}-\eqref{eq:Fk} hold for any $F_{\bullet} \in \overline{C_n^i}$. \vspace{5pt}

Consider the map $\eta: \overline{C_n^i} \rightarrow X := \mathbb{P}((V^{i-2}+\ker(x) \cap (\mathbb{C}v)^{\perp}) /  V^{i-2})$, given by $\eta(F_{\bullet}) = F_{n-i+2}/V^{i-2}$. It suffices to show that it is surjective, and has $\mathbb{P}^1$-fibres. To see that the map $\eta$ is surjective, it suffices to note that using Proposition \ref{cni}, the map $\eta{|_{C_n^i}}$ is surjective.  \vspace{5pt}

Now suppose that we fix $F_{n-i+2}/V^{i-2}\in X$, and let $F_\bullet\in\eta^{-1}\left(F_{n-i+2}/V^{i-2}\right)$. From \eqref{eq:Fk1} and \eqref{eq:Fk}, all of the spaces of $F_\bullet$ are determined except for $F_{n-i+1}$, and $$F_{n-i+1}\in\mathbb{P}(F_{n-i+2}/V^{i-1})\simeq \mathbb{P}^1.$$\end{proof}

\begin{corollary} We have $\overline{C_n^i} \cap \overline{C_n^{i+1}} \simeq \mathbb{P}^1$; and if $j-i>1$ the intersection $\overline{C_n^i} \cap \overline{C_n^j}$ is empty.  \end{corollary}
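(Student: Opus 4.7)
My plan is to exploit the explicit closed-condition characterisations of $\overline{C_n^i}$ afforded by Propositions \ref{cni} and \ref{P1bundle}. Using the normal basis from Section \ref{sec:notation}, I record that $V^{i-1}=\C\{v_{11},\dots,v_{1,n-i}\}$, that $V^{i-2}=V^{i-1}\oplus\C\{v_{1,n-i+1}\}$, and that $K:=\ker(x)\cap(\C v)^{\perp}=\C\{v_{11},v_{21},v_{1,n-1}^{*}-v_{21}^{*}\}$. A flag $F_\bullet$ lies in $\overline{C_n^i}$ precisely when $F_k=W_{n-1-k}$ for $1\le k\le n-i$; $V^{i-1}\subset F_{n-i+1}\subset F_{n-i+2}$; $V^{i-2}\subset F_{n-i+2}\subset V^{i-2}+K$; and $F_k=F_{k-1}\oplus\C\{x^{n-k}v\}$ for $n-i+3\le k\le n$. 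The corresponding conditions for $\overline{C_n^j}$ are obtained by substituting $j$ for $i$.

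For the adjacent intersection $\overline{C_n^i}\cap\overline{C_n^{i+1}}$, the two sets of conditions agree on $k\le n-i-1$. At $k=n-i$, the $\overline{C_n^i}$-condition forces $F_{n-i}=V^{i-1}$, which is compatible with the $\overline{C_n^{i+1}}$-constraint $V^i\subset F_{n-i}\subset F_{n-i+1}$. From $\overline{C_n^{i+1}}$ we further read off $V^{i-1}\subset F_{n-i+1}\subset V^{i-1}+K$ and $F_{n-i+2}=F_{n-i+1}\oplus\C\{x^{i-2}v\}$, and all $F_k$ for $k\ge n-i+3$ are then forced. I will check that the remaining $\overline{C_n^i}$-conditions on $F_{n-i+2}$ are automatically satisfied; the key linear-algebra point is $x^{i-2}v=v_{1,n-i+1}\notin V^{i-1}+K$ for $i\ge 3$, with a direct parallel check handling $i=2$. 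The intersection is then parametrised by the line $F_{n-i+1}/V^{i-1}$ inside the two-dimensional quotient $(V^{i-1}+K)/V^{i-1}$, so it is a $\P^1$; conversely, each such line assembles into a flag satisfying every defining condition of both closures.

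For the non-adjacent case $j\ge i+2$, the determined range $k\le n-i$ in $\overline{C_n^i}$ contains $k=n-j+2$, so $F_{n-j+2}=V^{j-3}$, and this space contains the basis vector $v_{1,n-j+2}$. But $\overline{C_n^j}$ requires $F_{n-j+2}\subset V^{j-2}+K=\C\{v_{11},\dots,v_{1,n-j+1},v_{21},v_{1,n-1}^{*}-v_{21}^{*}\}$, and $v_{1,n-j+2}$ is linearly independent of these spanning vectors (using $2\le n-j+2\le n-1$, which follows from $j\ge 4$, i.e., from $i\ge 2$ together with $j-i\ge 2$). The two conditions are thus incompatible and the intersection is empty.

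The only genuine obstacle I anticipate is clean index bookkeeping, together with the boundary case $i=2$ where $x^{i-2}v=v$ carries a $v_{21}$ contribution and $F_{n-i+2}$ coincides with the top Lagrangian $F_n$. Once those are handled carefully, both claims reduce to verifying linear independence of a small set of distinguished basis vectors in explicit finite-dimensional quotients.
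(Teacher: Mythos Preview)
Your approach is correct and essentially the same as the paper's: both exploit the closed conditions \eqref{eq:Fk1}, \eqref{eq:F2}, \eqref{eq:Fk} from Propositions \ref{cni} and \ref{P1bundle} (which in fact characterise $\overline{C_n^i}$, since the locus they cut out is an irreducible $\P^1$-bundle over $\P^1$ of the right dimension), intersect the two systems of conditions, and read off that everything is determined except $F_{n-i+1}\in\P((V^{i-1}+K)/V^{i-1})$ in the adjacent case, while for $j-i>1$ the forced value $F_{n-j+2}=V^{j-3}$ from the $i$-side violates the containment $F_{n-j+2}\subset V^{j-2}+K$ from the $j$-side. The only cosmetic issues are that your ``$W_{n-1-k}$'' should read $V^{n-1-k}$, and your formula $V^{i-2}=V^{i-1}\oplus\C\{v_{1,n-i+1}\}$ holds only for $i\ge 3$ (for $i=2$ one has $V^0=V^1\oplus\C\{v\}$ with $v=v_{1,n-1}+v_{21}$), but you already flag this boundary case and the verification goes through once that correction is made.
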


\begin{proof} If $F_{\bullet} \in \overline{C_n^j}$, then from the proof of Proposition \ref{P1bundle}, $F_{n-j+2} \subseteq x^{-1}(F_{n-j})$. If we also have $F_{\bullet} \in \overline{C_n^i}$, with $j-i>1$, then $x^{j-3}v \in F_{n-j+2}$; since $x^{j-2}v \notin F_{n-j}$, it follows that $F_{n-j+2} \not \subset x^{-1}(F_{n-j})$. This is a contradiction, hence the intersection $\overline{C_n^i} \cap \overline{C_n^j}$ is empty. \vspace{5pt}

Now suppose that $F_\bullet\in \overline{C_n^i} \cap \overline{C_n^{i+1}}$, then by \eqref{eq:Fk1}, $F_k$ is determined for $1\leq k\leq n-i$. By applying \eqref{eq:F2} and \eqref{eq:Fk} to $\overline{C_n^{i+1}}$ we obtain that
\begin{equation}\label{eq:F1} V^{i-1}\subset F_{n-i+1}\subset V^{i-1}+\ker(x)\cap(\C v)^\perp \end{equation}
and $F_k$ is determined from $F_{k-1}$ for $n-i+2\leq k \leq n$.
In conclusion, any $F_\bullet\in \overline{C_n^i} \cap \overline{C_n^{i+1}}$ is determined by the choice of $F_{n-i+1}$, with
$$ F_{n-i+1}\in \mathbb{P}((V^{i-1}+\ker(x)\cap(\C v)^\perp)/V^{i-1})\simeq \mathbb{P}^1.$$ \end{proof}

\begin{definition} For a scheme $X$, and a finite-dimensional vector bundle $\vartheta: V \rightarrow X$, we define an associated
projective bundle $\tilde{\vartheta}: \mathbb{P}(V) \rightarrow X$ whose fibre $\tilde{\vartheta}^{-1}(x)$ over a base point $x \in X$ is just the projectivisation of the affine fibre $\vartheta^{-1}(x)$. For $n>0$, let $\Sigma_{-n}$ be the $\mathbb{P}^1$-bundle over $\mathbb{P}^1$ obtained as the associated projective bundle to the two-dimensional vector bundle $\mathcal{O} \oplus \mathcal{O}(-n)$ over $\mathbb{P}^1$. \end{definition} 

\begin{remark} \label{tensorL} Given a line bundle $\mathcal{L}$ on $X$, one may check that $\mathbb{P}(V) \simeq \mathbb{P}(\mathcal{L} \otimes V)$; in particular if $j<i$ the projective bundle associated to $\mathcal{O}(i) \oplus \mathcal{O}(j)$ is $\Sigma_{j-i}$. \end{remark}

\begin{proposition} \label{Sigma-1} We have an isomorphism, $\overline{C_n^i} \simeq \Sigma_{-1}$. \end{proposition}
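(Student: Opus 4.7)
The plan is to explicitly identify the rank-$2$ vector bundle $E$ on $X \simeq \mathbb{P}^1$ such that $\overline{C_n^i} \simeq \mathbb{P}(E)$ via the fibre bundle map $\eta$ of Proposition \ref{P1bundle}. The identification with $\Sigma_{-1}$ will then fall out because rank-$2$ extensions of $\mathcal{O}_{\mathbb{P}^1}(-1)$ by $\mathcal{O}_{\mathbb{P}^1}$ split, so $E$ must be the direct sum $\mathcal{O} \oplus \mathcal{O}(-1)$.

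First, I would pin down the fibres of $\eta$ explicitly. Given a closed point $F_{n-i+2}/V^{i-2} \in X$, we have $V^{i-1} \subseteq F_{n-i+1} \subseteq F_{n-i+2}$ throughout $\overline{C_n^i}$ (the inclusion $V^{i-1}=F_{n-i}\subseteq F_{n-i+1}$ holding by \eqref{eq:Fk1}). Specifying $F_\bullet \in \eta^{-1}(F_{n-i+2}/V^{i-2})$ thus reduces to choosing the $1$-dimensional subspace $F_{n-i+1}/V^{i-1}$ inside the $2$-dimensional space $F_{n-i+2}/V^{i-1}$. Hence the fibre is $\mathbb{P}(F_{n-i+2}/V^{i-1})$, and $\eta$ is the projectivisation of the rank-$2$ vector bundle $E$ on $X$ whose fibre at the point $F_{n-i+2}/V^{i-2}$ is $F_{n-i+2}/V^{i-1}$.

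Second, I would describe $E$ via an extension. Fibrewise over $X$ there is a short exact sequence
\[ 0 \to V^{i-2}/V^{i-1} \to F_{n-i+2}/V^{i-1} \to F_{n-i+2}/V^{i-2} \to 0,\]
which globalises to a short exact sequence of vector bundles on $X$:
\[ 0 \to \mathcal{O}_X \to E \to \mathcal{O}_X(-1) \to 0.\]
Indeed, the left-hand term is a trivial line bundle (the constant line $V^{i-2}/V^{i-1}$ does not vary over $X$), and the right-hand term is, by construction, the tautological sub-line bundle on $X = \mathbb{P}\bigl((V^{i-2}+\ker(x) \cap (\mathbb{C}v)^\perp)/V^{i-2}\bigr)$, namely $\mathcal{O}_X(-1)$.

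Finally, since $\mathrm{Ext}^1_{\mathbb{P}^1}(\mathcal{O}(-1), \mathcal{O}) = H^1(\mathbb{P}^1, \mathcal{O}(1)) = 0$, the extension splits and $E \cong \mathcal{O}_X \oplus \mathcal{O}_X(-1)$. Therefore $\overline{C_n^i} \simeq \mathbb{P}(E) \simeq \mathbb{P}(\mathcal{O}_{\mathbb{P}^1} \oplus \mathcal{O}_{\mathbb{P}^1}(-1)) = \Sigma_{-1}$, as desired. The main obstacle is step two: justifying the globalisation of the fibrewise short exact sequence, i.e., that $E$ is genuinely a vector bundle on $X$ and that the sub- and quotient bundles really are the tautological ones. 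This is routine but worth verifying, and follows by realising $E$ as the pullback, along $\eta$, of an appropriate tautological sheaf on the relevant partial flag variety of $V$.
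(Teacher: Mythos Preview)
Your proof is correct and follows essentially the same approach as the paper: both identify the rank-$2$ bundle $E$ whose projectivisation is $\overline{C_n^i}$, exhibit the short exact sequence $0 \to \mathcal{O} \to E \to \mathcal{O}(-1) \to 0$ coming from the constant line $V^{i-2}/V^{i-1}$ and the tautological quotient, and invoke $\mathrm{Ext}^1_{\mathbb{P}^1}(\mathcal{O}(-1),\mathcal{O})=0$ to split it. The only cosmetic difference is that the paper first reduces to the case $i=n$ (via $\overline{C_n^i}\simeq\overline{C_i^i}$) before running the argument, whereas you work directly with general $i$ using the notation of Proposition~\ref{P1bundle}.
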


\begin{proof} Because of \eqref{eq:F2}-\eqref{eq:Fk}, we have $\overline{C_n^i} \simeq \overline{C_i^i}$, hence it suffices to prove that $\overline{C_n^n} \simeq \Sigma_{-1}$. From Proposition \ref{P1bundle}, it follows that: 
$$ 
\overline{C_n^n} \simeq \{ (F_1, F_2) \; | \; \mathbb{C}\{x^{n-2}v\} \subset F_2 \subset \text{ker}(x) \cap (\mathbb{C}v)^{\perp}, F_1 \subset F_2 \}. 
$$ 
Define the variety $\widehat{C_n^n}$ as follows: 
$$
\widehat{C_n^n} \simeq \{ (F_2, s) \; | \; \mathbb{C}\{x^{n-2}v\} \subset F_2 \subset \text{ker}(x) \cap (\mathbb{C}v)^{\perp}, s \in F_2 \}. 
$$ 
We have a natural map $\widehat{C_n^n} \rightarrow \mathbb{P}(\text{ker}(x) \cap (\mathbb{C}v)^{\perp}/ \mathbb{C}\{x^{n-2}v\}) \simeq \mathbb{P}^1$. Under this map, $\widehat{C_n^n}$ is a two-dimensional vector bundle over $\mathbb{P}^1$, and its projectivisation is isomorphic to $\overline{C_n^n}$. Hence it suffices to show that $\widehat{C_n^n} \simeq \mathcal{O}_{\mathbb{P}^1} \oplus \mathcal{O}_{\mathbb{P}^1}(-1)$; this implies that $\overline{C_n^n} \simeq \Sigma_{-1}$. \vspace{5pt}

Consider the sub-bundle of $\widehat{C_n^n}$ consisting of pairs $(F_2, s)$ with $s \in \mathbb{C}\{x^{n-2}v\}$; it is isomorphic to $\mathcal{O}_{\mathbb{P}^1}$, and the quotient is isomorphic to $\mathcal{O}_{\mathbb{P}^1}(-1)$. Since there are not any non-trivial extensions between $\mathcal{O}_{\mathbb{P}^1}$ and $\mathcal{O}_{\mathbb{P}^1}(-1)$, the conclusion follows. \end{proof}

\begin{corollary} Recalling that $\Sigma_{-1}$ is isomorphic to the blow-up of $\mathbb{P}^2$ at a point $a \in \mathbb{P}^2$, there is a natural embedding $\mathbb{P}^2 \backslash \{a\} \rightarrow \Sigma_{-1}$. The map $C_n^i \rightarrow \overline{C_n^i}$ can be identified with this embedding; in particular, $\overline{C_n^i} \backslash C_n^i \simeq \mathbb{P}^1$. \end{corollary}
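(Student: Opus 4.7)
The plan is to identify the complement $\overline{C_n^i} \setminus C_n^i$ as the exceptional divisor of the standard contraction $\Sigma_{-1} \to \mathbb{P}^2$, and then invoke the classical identification $\Sigma_{-1} \simeq \mathrm{Bl}_a \mathbb{P}^2$ to conclude.

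First, I would read off the complement set-theoretically. The proof of Proposition \ref{cni} shows that a flag $F_\bullet \in \overline{C_n^i}$ lies in $C_n^i$ precisely when $F_{n-i+1} \neq V^{i-2}$. Via the $\mathbb{P}^1$-bundle $\eta: \overline{C_n^i} \to \mathbb{P}^1$ of Proposition \ref{P1bundle}, whose fibre over $F_{n-i+2}$ is $\mathbb{P}(F_{n-i+2}/V^{i-1})$, the locus $\{F_{n-i+1} = V^{i-2}\}$ meets each such fibre at the single point corresponding to the canonical line $V^{i-2}/V^{i-1} \subset F_{n-i+2}/V^{i-1}$. Hence $\overline{C_n^i} \setminus C_n^i$ is a section of $\eta$, and is therefore isomorphic to $\mathbb{P}^1$.

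Next, I would show this section is a $(-1)$-curve. The rank-two vector bundle $E$ on $\mathbb{P}^1$ with fibre $F_{n-i+2}/V^{i-1}$ fits in a short exact sequence
\[
0 \to V^{i-2}/V^{i-1} \to E \to F_{n-i+2}/V^{i-2} \to 0,
\]
in which the sub-bundle is the trivial $\mathcal{O}_{\mathbb{P}^1}$ and the quotient is the tautological $\mathcal{O}_{\mathbb{P}^1}(-1)$; the vanishing of $H^1(\mathbb{P}^1, \mathcal{O}(1))$ forces the sequence to split, recovering the identification $\overline{C_n^i} \simeq \mathbb{P}(\mathcal{O} \oplus \mathcal{O}(-1)) = \Sigma_{-1}$ from Proposition \ref{Sigma-1}. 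The section of $\mathbb{P}(E)$ coming from the $\mathcal{O}$-summand is exactly $\{F_{n-i+1} = V^{i-2}\}$, and the standard formula gives its normal bundle as $\mathcal{O}^{-1} \otimes \mathcal{O}(-1) = \mathcal{O}(-1)$, of degree $-1$, so the section has self-intersection $-1$.

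Finally, I would appeal to the classical description of $\Sigma_{-1}$ as $\mathrm{Bl}_a \mathbb{P}^2$, under which the unique $(-1)$-curve is the exceptional divisor contracted to the point $a$; its complement maps isomorphically onto $\mathbb{P}^2 \setminus \{a\}$. Combining with the previous two steps, the open immersion $C_n^i \hookrightarrow \overline{C_n^i}$ is identified with the natural embedding $\mathbb{P}^2 \setminus \{a\} \hookrightarrow \mathrm{Bl}_a \mathbb{P}^2$, as claimed.

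The main obstacle is the second step: matching the geometric section $\{F_{n-i+1} = V^{i-2}\}$ with the sub-bundle $\mathcal{O} \subset \mathcal{O} \oplus \mathcal{O}(-1)$ used in the proof of Proposition \ref{Sigma-1}, and checking that its self-intersection is $-1$. Both are routine once the short exact sequence above is in hand, but they require careful bookkeeping with the tautological line bundle on the base $\mathbb{P}^1$.
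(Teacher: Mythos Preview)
Your proposal is correct and arrives at the same conclusion by essentially the same route as the paper: both identify $\overline{C_n^i}\setminus C_n^i$ with the locus $\{F_{n-i+1}=V^{i-2}\}$, recognise this as the section of the $\mathbb{P}^1$-bundle coming from the sub-line-bundle $\mathcal{O}\subset\mathcal{O}\oplus\mathcal{O}(-1)$, and then match it with the exceptional divisor of $\mathrm{Bl}_a\mathbb{P}^2$.

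The one genuine difference is how that last match is made. The paper first reduces to $i=n$ (so that $F_{n-i+1}=F_1$ and $V^{i-2}=\mathbb{C}\{x^{n-2}v\}$) and then simply asserts that under the identification $\Sigma_{-1}\simeq\mathrm{Bl}_a\mathbb{P}^2$ the open set $\mathbb{P}^2\setminus\{a\}$ corresponds to $\{F_1\neq\mathbb{C}\{x^{n-2}v\}\}$; this is implicitly relying on the concrete incidence-variety description of the blow-up. You instead stay with general $i$ and supply the missing justification by computing the normal bundle of the section to be $\mathcal{O}(-1)$, hence self-intersection $-1$, and invoking uniqueness of the $(-1)$-curve on $\Sigma_{-1}$. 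Your argument is longer but more self-contained; the paper's is shorter but leans on the reader already knowing which section of $\mathbb{P}(\mathcal{O}\oplus\mathcal{O}(-1))$ becomes the exceptional divisor.
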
 

\begin{remark} 
See page 4 of Ravi Vakil's notes, \cite{rvnotes} for a proof of the fact that $\Sigma_{-1}$ is isomorphic to the blow-up of $\mathbb{P}^2$ at a point $a \in \mathbb{P}^2$. 
\end{remark}

\begin{proof} It suffices to prove these statement for the inclusion $C_n^n \hookrightarrow \overline{C_n^n}$. After identifying $\overline{C_n^n}$, the space of $(F_1, F_2)$ from Proposition \ref{Sigma-1}, with $\Sigma_{-1}$, the blow-up of $\mathbb{P}^2$ at a point, it is clear that the open subvariety $\mathbb{P}^2 \backslash \{a\}$ corresponds to pairs $(F_1, F_2)$ with $F_1 \neq \mathbb{C}\{x^{n-2}v\}$. To finish the proof, note that the latter space is precisely $\overline{C_n^n}$, using Proposition \ref{cni}. It follows that $\overline{C_n^n} \backslash C_n^n \simeq \mathbb{P}^1$; this is also clear from the proof of Proposition \ref{P1bundle}. \end{proof}

\begin{example} 
The simplest example is $\overline{C_2^2}$. There is only one irreducible component. The space $(\mathbb{C}(v))^{\perp}$ has dimension 3, and $\mathbb{C}v \subset F_2 \subset (\mathbb{C}v)^{\perp}$. Sending a flag $F_{\bullet}$ to its $F_2$ gives a map $\overline{C_2^2} \rightarrow \mathbb{P}((\mathbb{C}v)^{\perp} / \mathbb{C}v)$, and hence $\overline{C_2^2}$ is a $\mathbb{P}^1$-bundle over $\mathbb{P}^1$.  \vspace{5pt}

The component $\overline{C_3^3}$ can be described similarly. The conditions are that $v\in F_3\setminus F_2$ and $xv\in F_2\setminus F_1$. We need to choose $F_1\in \mathbb{P}(\ker(x)\cap (\C v)^\perp)\setminus \C (xv)$ again this is a $\mathbb{P}^2$ with one point removed. Once $F_1$ is chosen, the conditions tell us that $F_2=F_1+\C \{xv\}$ and $F_3=F_2+\C\{v\}$ are determined.
\end{example}

\textbf{Case 2.}  For $1 \leq i < j \leq n$, let $T_{i,j}$ be the bitableau of shape $((n-2), (2))$ with the second tableau containing the numbers $\{ i, j \}$ in a single row, and the first tableau containing the other $n-2$ numbers in a single row. Let $C_n^{i,j} = \Phi^{-1}(T_{i,j})$, and let its closure $\overline{C_n^{i,j}}$ be the irreducible component of $\cC_{(v,x)}$ corresponding to $T_{i,j}$. 
\begin{proposition} \label{trivialP1bundle} 
Suppose $j-i>1$, then $\overline{C_n^{i,j}} \simeq \mathbb{P}^1 \times \mathbb{P}^1$. 
\end{proposition}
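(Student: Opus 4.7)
The plan is to reduce to the case $j = n$ and then exhibit an explicit isomorphism $\overline{C_n^{i,n}} \xrightarrow{\sim} \P^1 \times \P^1$ by identifying two canonical and independent $\P^1$-choices in the flag. For the reduction: if $j < n$, the box labelled $n$ in $T_{i,j}$ lies in $\mu$, and by Proposition \ref{prop:B-irred} the target $\cB^{((n-2),(2))}_{((n-3),(2))}$ of the projection $p$ from Proposition \ref{prop:nested} is $0$-dimensional. Concretely, the analysis of Section \ref{sec:mu} (sub-case $\bar m = 1$) forces $F_1 = \C v_{1,1}$, and the fibre is the component $\overline{\Phi^{-1}(T')}$ in the smaller exotic Springer fibre on $F_1^\perp/F_1$ of exotic type $((n-3),(2))$. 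Iterating, $\overline{C_n^{i,j}} \cong \overline{C_j^{i,j}}$, so it suffices to prove the statement when $j = n$.

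For the case $j = n$ (with $1 \leq i \leq n-2$), fix a normal basis $\{v_{1,k}, v_{1,k}^*\}_{k=1}^n$ for $(v,x) \in \mO_{((n-2),(2))}$, so that $v = v_{1,n-2}$ and $\ker(x) \cap (\C[x]v)^\perp = \Span(v_{1,1}, v_{1,n}^*)$. The plan is to show that every $F_\bullet \in \overline{C_n^{i,n}}$ is uniquely determined by the pair $(F_1, F_{n-i+1}/F_{n-i})$, where $F_1 \in \P(\Span(v_{1,1}, v_{1,n}^*)) \cong \P^1$ and $F_{n-i+1}/F_{n-i}$ is a line in a canonical $2$-dimensional subspace $W \subset F_{n-i}^\perp/F_{n-i}$ that is independent of $F_1$. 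The resulting map $\overline{C_n^{i,n}} \to \P^1 \times \P^1$ is then bijective on closed points with a regular inverse, and hence an isomorphism of smooth irreducible projective surfaces.

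The technical heart of the argument is to show that the intermediate flag spaces $F_2, \ldots, F_{n-i}$ and $F_{n-i+2}, \ldots, F_n$ are rigidly determined, \emph{independently} of $F_1$. This follows by iterating Proposition \ref{prop:nested}: at every recursion step $k \notin \{i, n\}$ the added box lies in $\mu$, the dim-difference is $0$, and Section \ref{sec:mu} sub-case $\bar m = 1$ forces the unique choice corresponding to $\C v'_{1,1}$ in the normal basis of the appropriate smaller Springer fibre. Tracking these normal bases across iterations, I expect to recover explicit uniform formulae such as $F_k = \Span(v_{1,1}, v_{1,2}, \ldots, v_{1,k-1}, v_{1,n}^*)$ for $2 \leq k \leq n-i$, together with an analogous uniform expression for the tail $F_{n-i+2}, \ldots, F_n$ in terms of $F_{n-i+1}$. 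The main obstacle is the careful bookkeeping of these normal bases across the recursion; the hypothesis $j - i > 1$ is essential, as it guarantees at least one intermediate $\mu$-addition step between the two $\nu$-addition steps at positions $i$ and $n$, providing the rigidity needed for the two $\P^1$-choices to decouple. In the boundary case $j - i = 1$ the two $\P^1$-choices become coupled and the resulting surface is the non-trivial Hirzebruch $\Sigma_{-1}$ rather than $\Sigma_0 = \P^1 \times \P^1$.
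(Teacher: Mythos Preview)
Your approach is correct and follows the same core idea as the paper: show that all but two of the subspaces $F_k$ are rigidly determined by closed conditions, and that the remaining two choices vary independently in $\P^1$'s. The organizational difference is that the paper does \emph{not} reduce to $j=n$: it handles general $i<j$ with $j-i>1$ directly, finding that $F_k$ is forced for $1\le k\le n-j$, that $F_{n-j+2}=x^{-1}(F_{n-j})$ and $F_{n-i+2}=x^{-1}(F_{n-i})$ are forced by dimension count, and that the remaining $F_k$ are determined by $F_k=F_{k-1}\oplus\C\{x^{\text{(appropriate)}}v\}$; the only free choices are $F_{n-j+1}\in\P(x^{-1}(F_{n-j})/F_{n-j})$ and $F_{n-i+1}\in\P(x^{-1}(F_{n-i})/F_{n-i})$. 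Your reduction to $j=n$ via iterated $0$-dimensional fibres of $p$ is valid and recovers exactly the same picture (your $F_1$ is then the paper's $F_{n-j+1}$), but it adds a layer of recursion and normal-basis bookkeeping that the paper avoids by computing directly with powers of $x$ applied to $v$. The paper's route also makes the passage from $C_n^{i,j}$ to its closure transparent: the conditions it derives are visibly closed, so they persist on $\overline{C_n^{i,j}}$ and in fact cut out an irreducible surface, which must then equal the closure. In your write-up you should make this closure step explicit rather than appealing to smoothness of $\overline{C_n^{i,n}}$, which is not known a priori.

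One small correction to your closing remark: in the boundary case $j-i=1$ the paper shows (Proposition~\ref{Sigma-2}) that $\overline{C_n^{i,i+1}}\simeq\Sigma_{-2}$, not $\Sigma_{-1}$; the surface $\Sigma_{-1}$ arises in Case~1 (Proposition~\ref{Sigma-1}), not here.
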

\begin{proof} 
Let $F_{\bullet} \in C_n^{i,j}$, with $j-i>1$ (this implies that $n\geq 3$). For $1\leq k\leq n-j$, we have $\eType(v+F_k, x|_{F_k^{\perp}/F_k})=((n-k-2),(2))$. For $k=1$, this implies that
$ (x|_{F_k^{\perp}/F_k})^{n-3}(v+F_1) = 0$ which means that $x^{n-3}v\in F_1$; but since $x^{n-3}v\neq 0$, we have that $F_1=\C\{x^{n-3}v\}$. For $1<k\leq n-j$, recursively, we obtain that $x^{n-k-2}v\in F_k$, hence
\begin{equation}\label{eq:ijFk} 
F_k=\C\{x^{n-3}v, \ldots, x^{n-k-2}v\}~\qquad 1\leq k\leq n-j.
\end{equation}

Now let $n-j<k<n-i$. We have $\eType(v+F_k, x|_{F_k^{\perp}/F_k})=((n-k-1),(1))$, hence $x^{n-k-2}v \notin F_k$. Also, $\eType(v+F_{k+1}, x|_{F_{k+1}^{\perp}/F_{k+1}})=((n-k-2),(1))$, hence $x^{n-k-2}v \in F_{k+1}$ and $F_{k+1} = F_k \oplus \mathbb{C}\{x^{n-k-2}v\}$. In particular, $F_{n-j+2} = F_{n-j+1} \oplus \mathbb{C}\{x^{j-3}v\} \subset x^{-1}(F_{n-j})$, but by counting dimensions we actually get an equality 
\begin{equation}\label{eq:Fnj2} F_{n-j+2}=x^{-1}(F_{n-j}).\end{equation}
We have also established that
\begin{equation}\label{eq:Fkij}F_{k} = F_{k-1} \oplus \mathbb{C}\{x^{n-k-1}v\}~\quad \text{for} \quad n-j+3\leq k\leq n-i.
\end{equation}

Suppose now that $k > n-i$. Then $\eType(v+F_k, x|_{F_k^{\perp}/F_k})=((n-k),\emptyset)$, hence $x^{n-k-1}v \notin F_k$. Also  $\eType(v+F_{k+1}, x|_{F_{k+1}^{\perp}/F_{k+1}})=((n-k-1),0)$, hence $x^{n-k-1}v \in F_{k+1}$ and $F_{k+1} = F_k \oplus \mathbb{C}x^{n-k-1}v$. In particular we get that $F_{n-i+2}=F_{n-i+1}\oplus Cx^{i-2}v=x^{-1}(F_{n-i})$. Thus: 
\begin{equation}
F_{n-i+2}=x^{-1}(F_{n-i}), \quad \text{and} \label{eq:Fi2}
\end{equation}
\begin{equation}
F_{k}= F_{k-1}\oplus \C x^{n-k}v~\quad \text{for} \quad n-i+3\leq k\leq n. \label{eq:Fk3}
\end{equation}
The conditions \eqref{eq:ijFk}-\eqref{eq:Fk3} are closed, hence they hold also in $\overline{C_n^{i,j}}$ and in fact they define it. Therefore all spaces in a flag $F_{\bullet}\in\overline{C_n^{i,j}}$ are determined uniquely except for $F_{n-j+1}\in \mathbb{P}(x^{-1}(F_{n-j})/F_{n-j})$ and $F_{n-i+1}\in \mathbb{P}(x^{-1}(F_{n-i})/F_{n-i})$, and it follows that $\overline{C_n^{i,j}} \simeq \mathbb{P}^1 \times \mathbb{P}^1$. \end{proof}

\begin{proposition} \label{Sigma-2} 
For $1 \leq i \leq n-1$, we have $\overline{C_n^{i,i+1}} \simeq \Sigma_{-2}$.
\end{proposition}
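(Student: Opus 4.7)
The plan mirrors that of Proposition \ref{Sigma-1}, but replaces the cohomological vanishing $\operatorname{Ext}^1(\mathcal{O}(-1),\mathcal{O})=0$ by an explicit computation of a diagonal transition matrix.

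\textbf{Step 1.} I would first adapt the argument in the proof of Proposition \ref{trivialP1bundle} to the case $j=i+1$. The goal is to show that a flag $F_\bullet\in\overline{C_n^{i,i+1}}$ is uniquely determined by the pair $(F_{n-i},F_{n-i+1})$: the spaces $F_k$ for $k\leq n-i-1$ are fixed by the analogue of \eqref{eq:ijFk}, and for $k\geq n-i+2$ one has $F_k=F_{k-1}\oplus \C x^{n-k}v$ (by the repeated transitions of exotic type $((r),\emptyset)\to((r-1),\emptyset)$ analysed in \S\ref{subsub0}). The new feature compared with the case $j>i+1$ is that the intermediate range $n-j<k<n-i$ from the earlier proof is now empty, so neither of the two free parameters is forced by the other via a relation like \eqref{eq:Fnj2}.

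\textbf{Step 2.} I would then set $\bar W=F_{n-i-1}^\perp/F_{n-i-1}$, $\bar x=x|_{\bar W}$, $\bar v=v+F_{n-i-1}$, and pick a normal basis $\{v_1,\ldots,v_{i+1},v_1^*,\ldots,v_{i+1}^*\}$ for $(\bar x,\bar v)$ of exotic type $((i-1),(2))$, so that $\bar v=v_{i-1}$. The projection
$$\eta\colon\overline{C_n^{i,i+1}}\longrightarrow \mathbb{P}\bigl(\ker\bar x\cap(\C[\bar x]\bar v)^\perp\bigr),\qquad F_\bullet\mapsto F_{n-i}/F_{n-i-1},$$
has target $\mathbb{P}(\C\{v_1,v_{i+1}^*\})\simeq\mathbb{P}^1$, parametrising lines $L_1=\C(\alpha v_1+\beta v_{i+1}^*)$. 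Proposition \ref{prop:B-irred} applied to $\bar W$ shows that each fibre of $\eta$ is an irreducible $\mathbb{P}^1$, so $\overline{C_n^{i,i+1}}$ is a $\mathbb{P}^1$-bundle over $\mathbb{P}^1$.

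\textbf{Step 3.} To identify this bundle as $\Sigma_{-2}$, write $\overline{C_n^{i,i+1}}=\mathbb{P}(K)$ where $K$ is the rank-$2$ bundle on $\mathbb{P}^1$ whose fibre at $L_1$ is $\ker(\bar x|_{L_1^\perp/L_1})\cap(\C[\bar x](\bar v+L_1))^\perp$. A direct computation, similar to the analyses in Sections \ref{sec:jordan} and \ref{sec:proof}, shows that on the chart $U_\beta=\{\beta\neq 0\}$ with coordinate $s=\alpha/\beta$ a natural frame is $(e_1,e_2)=(v_1,\,sv_2+v_i^*)$, while on $U_\alpha=\{\alpha\neq 0\}$ with $t=1/s$ a natural frame is $(f_1,f_2)=(v_{i+1}^*,\,v_2+tv_i^*)$. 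Using the congruence $v_{i+1}^*\equiv -sv_1\pmod{L_1}$ (valid on $U_\beta$) and the direct identity $f_2=(1/s)e_2$, the transition matrix is the diagonal matrix $\operatorname{diag}(-s,\,1/s)$. Hence $K$ splits as $L^+\oplus L^-$ with $L^\pm\simeq \mathcal{O}(\pm 1)$, as one reads off by counting zeros and poles of the meromorphic frame sections. Therefore $K\simeq \mathcal{O}(1)\oplus\mathcal{O}(-1)$, and by Remark \ref{tensorL},
$$\overline{C_n^{i,i+1}}=\mathbb{P}(K)\simeq\mathbb{P}(\mathcal{O}\oplus\mathcal{O}(-2))=\Sigma_{-2}.$$

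The main obstacle will be carrying out Step 1 cleanly: one must verify that, unlike the decoupled case $j>i+1$, no hidden constraint reduces $(F_{n-i},F_{n-i+1})$ to a single free parameter, while also showing that all the closure conditions cut out precisely the projectivised bundle $\mathbb{P}(K)$ (as opposed to a proper closed subvariety). Once this is secured, the splitting of $K$ via the explicit diagonal transition matrix is mechanical.
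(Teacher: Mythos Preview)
Your plan is correct and follows the paper's strategy: reduce the flag to the free pair $(F_{n-i},F_{n-i+1})$, view $\overline{C_n^{i,i+1}}$ as $\mathbb{P}(K)$ for a rank-$2$ bundle $K$ over $\mathbb{P}^1$, and identify $K\simeq\mathcal{O}(1)\oplus\mathcal{O}(-1)$. The only real difference is in this last identification: you diagonalise the transition matrix explicitly, whereas the paper uses the natural filtration $\ker(x)/F_1\subset x^{-1}(F_1)/F_1$, identifies the sub- and quotient line bundles as $\mathcal{O}(1)$ and $\mathcal{O}(-1)$ (the latter via the isomorphism $x\colon x^{-1}(F_1)/\ker(x)\xrightarrow{\sim}F_1$), and then splits the sequence using $\mathrm{Ext}^1(\mathcal{O}(-1),\mathcal{O}(1))=0$.
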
 
\begin{proof} 
Let $F_\bullet\in C_n^{i,i+1}$ then, by the same argument as in Proposition \ref{trivialP1bundle}, we have the equation \eqref{eq:ijFk} for $1\leq k\leq n-i-1$. \vspace{5pt}

Since, for $n-i+1\leq k\leq n$, $\eType\left( v+F_{k},x|_{F_{k}^\perp/F_{k}}\right)=((n-k),\emptyset)$
we have that $x^{n-k-1}\in F_{k+1}\setminus F_k$, so
\begin{equation}\label{eq:Fk4}
F_{k+1} = F_k \oplus \mathbb{C}\{x^{n-k-1}v\}~\qquad \text{for} \quad n-i+1\leq k\leq n.
\end{equation}
Again, \eqref{eq:ijFk} and \eqref{eq:Fk4} are closed conditions, so they hold in $\overline{C_n^{i,i+1}}$. This means that any flag is determined by the choice of $F_{n-i}\in \mathbb{P}(x^{-1}(F_{n-i-1})/F_{n-i-1})$ and $F_{n-i+1}\in \mathbb{P}(x^{-1}(F_{n-i})/F_{n-i})$. \vspace{5pt}

In particular, there is an isomorphism $\overline{C_n^{i,i+1}} \simeq \overline{C_{i+1}^{i,i+1}}$. Hence it suffices to show the result in the case where $i=n-1$. We have established the following description: 
$$
\overline{C_n^{n-1,n}} \simeq \{ (F_1, F_2) \; | \; F_1 \in \mathbb{P}(\text{ker}(x)), F_1 \subset F_2 \subset x^{-1}(F_1) \}.
$$
We have a $\mathbb{P}^1$-bundle map, $\overline{C_n^{n-1,n}} \rightarrow \mathbb{P}(\text{ker}(x))$, and the fibre over $F_1 \subseteq \text{ker}(x)$ is $\mathbb{P}(x^{-1}(F_1)/F_1)$. As vector spaces, $x^{-1}(F_1)/F_1 \simeq x^{-1}(F_1)/\text{ker}(x) \oplus \text{ker}(x)/F_1$. Clearly the line bundle over $\mathbb{P}(\text{ker}(x))$ with fibre over $F_1$ being $\text{ker}(x)/F_1$ is isomorphic to $\mathcal{O}(1)$. Writing out a basis for $x^{-1}(F_1)$, we see that the line bundle over $\mathbb{P}(\text{ker}(x))$ with fibre over $F_1$ being $x^{-1}(F_1)/\text{ker}(x)$ is isomorphic to the one whose fibre over $F_1$ is $F_1$ itself; and hence can be identified with $\mathcal{O}(-1)$. Hence $\overline{C_n^{n-1,n}} \simeq \mathbb{P}(\mathcal{O}(1) \oplus \mathcal{O}(-1)) \simeq \Sigma_{-2}$, as required (see Remark \ref{tensorL}).  
\end{proof}

\begin{corollary} Suppose $j-i>1$ and $l-k>1$. Then the intersection $\overline{C_n^{i,j}} \cap \overline{C_n^{k,l}}$ is empty unless $|i-k| \leq 1$ and $|j-l| \leq 1$. Furthermore,  $\overline{C_n^{i,j}} \cap \overline{C_n^{i,j+1}} \simeq \mathbb{P}^1 \simeq \overline{C_n^{i-1,j}} \cap \overline{C_n^{i,j}}$, while both intersections $\overline{C_n^{i,j}} \cap \overline{C_n^{i+1,j+1}}$ and $\overline{C_n^{i,j}} \cap \overline{C_n^{i-1,j+1}}$ consist of a single point. \end{corollary}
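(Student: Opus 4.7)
The plan is to work directly with the explicit parameterization of $\overline{C_n^{a,b}}$ established in the proof of Proposition \ref{trivialP1bundle}: a flag $F_\bullet$ lies in this component if and only if the relations \eqref{eq:ijFk}--\eqref{eq:Fk3} are satisfied, the only genuine degrees of freedom being the choices of $F_{n-b+1}$ and $F_{n-a+1}$ in their respective projective lines. The intersection $\overline{C_n^{i,j}} \cap \overline{C_n^{k,l}}$ is then the set of flags simultaneously satisfying both systems, and I would analyze it by comparing, index by index, which $F_r$'s are fixed, free, or determined by chain relations in each component.

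For the emptiness assertion when $l - j \geq 2$: in this case $n - l + 2 \leq n - j$, so by \eqref{eq:ijFk} in $\overline{C_n^{i,j}}$ the space $F_{n-l+2}$ is forced to equal $\mathbb{C}\{x^{n-3}v, \ldots, x^{l-4}v\} \subseteq \mathbb{C}[x]v$, which meets $\ker(x)$ only in the one-dimensional line $\mathbb{C}\{x^{n-3}v\}$. However, by \eqref{eq:Fnj2} applied in $\overline{C_n^{k,l}}$, $F_{n-l+2} = x^{-1}(F_{n-l})$ automatically contains all of $\ker(x)$, a two-dimensional subspace. These specifications are incompatible, forcing the intersection to be empty. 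For $k - i \geq 2$, a parallel argument applies at the index $n - k + 2$: in $\overline{C_n^{k,l}}$, $F_{n-k+2} = x^{-1}(F_{n-k})$ contains the vector $v_{1,n-1}^*$ (the preimage under $x$ of the ``top'' kernel vector $v_{1,n}^*$, which lies in $F_{n-k}$); whereas in $\overline{C_n^{i,j}}$, the space $F_{n-k+2}$ lies in the first chain range (nonempty since combined with $|j-l|\leq 1$ one has $k \leq j-1$, hence $n-k+2 \geq n-j+3$), and tracing through \eqref{eq:Fnj2}--\eqref{eq:Fkij} shows $F_{n-k+2}$ contains $v_{1,n}^*$ but not $v_{1,n-1}^*$.

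For the non-empty intersections, one tracks which free parameters become pinned down. In $\overline{C_n^{i,j}} \cap \overline{C_n^{i,j+1}}$: the free parameter $F_{n-j}$ of $\overline{C_n^{i,j+1}}$ is forced by \eqref{eq:ijFk} applied to $\overline{C_n^{i,j}}$ to take the single value $\mathbb{C}\{x^{n-3}v, \ldots, x^{j-2}v\}$ (one checks this is a valid element of the required projective line, as $x$ sends it into $F_{n-j-1}$), while the other free parameter $F_{n-i+1}$ remains unconstrained in both, giving $\mathbb{P}^1$. The case $\overline{C_n^{i-1,j}} \cap \overline{C_n^{i,j}}$ is treated dually, fixing $F_{n-i+1}$ via the upper chain. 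In $\overline{C_n^{i,j}} \cap \overline{C_n^{i+1,j+1}}$ (and likewise $\overline{C_n^{i,j}} \cap \overline{C_n^{i-1,j+1}}$), both free parameters of each component get pinned down by chain relations of the other, reducing the intersection to a single point.

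The principal difficulty will be verifying the various boundary situations---when $i = 1$, when $j$ is close to $n$, or when $j - i = 2$ so that certain chain ranges collapse---where the relations take slightly different forms and the compatibility of the determined values with the allowed projective ranges must be checked explicitly in each case.
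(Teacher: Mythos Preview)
Your proposal is correct and follows essentially the same approach as the paper: both arguments work directly with the explicit closed conditions \eqref{eq:ijFk}--\eqref{eq:Fk3} from Proposition~\ref{trivialP1bundle}, comparing the determined and free subspaces of a flag in the two components index by index. The minor differences are cosmetic---you detect the incompatibility at index $n-l+2$ via $\dim(\ker(x)\cap F_{n-l+2})$ whereas the paper computes $F_{n-j+2}$ directly in each component, and for the second emptiness case you track the presence of $v_{1,n-1}^*$ while the paper writes out $F_{n-i+1}$ explicitly---but the underlying mechanism is identical.
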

\begin{proof} Suppose $j-i>1$, $l-k>1$, and $j \geq l+2$. Then if $F_{\bullet} \in \overline{C_n^{i,j}}$, then from the proof of Proposition \ref{trivialP1bundle}, $F_{n-j} = \mathbb{C}\{ x^{n-3}v, \ldots, x^{j-2}v \}$ and $F_{n-j+2} = x^{-1}(F_{n-j})$. If $F_{\bullet} \in \overline{C_n^{k,l}}$, again from the proof of Proposition \ref{trivialP1bundle}, for $r \geq l$, $F_{n-r} = \mathbb{C}\{ x^{n-3}v, \ldots, x^{r-2}v \}$; in particular, $F_{n-j+2} = \mathbb{C}\{ x^{n-3}v, \ldots, x^{j-4}v \} \neq x^{-1}(F_{n-j})$. Hence, in this case, $\overline{C_n^{i,j}} \cap \overline{C_n^{k,l}} = \emptyset$.  \vspace{5pt}

Next suppose $j-i>1$ and $l-k>1$ and $i \geq k+2$. Then if $F_{\bullet} \in \overline{C_n^{i,j}}$, then from the proof of Proposition \ref{trivialP1bundle}, $F_{n-i+1} = \text{ker}(x^2) \oplus \mathbb{C}\{x^{n-5}v, \cdots, x^{i-1}v \}$. However if $F_{\bullet} \in \overline{C_n^{k,l}}$, then from Proposition \ref{trivialP1bundle} again, since $i \geq k+2$ we have $F_{n-i+1} = \text{ker}(x) \oplus \mathbb{C}\{ x^{n-4}v, \cdots, x^{i-2}v \}$. In this case, again we have that $\overline{C_n^{i,j}} \cap \overline{C_n^{k,l}} = \emptyset$. \vspace{5pt}

Next suppose $j-i > 1$, and $F_{\bullet} \in \overline{C_n^{i,j}} \cap \overline{C_n^{i,j+1}}$; then 
$$ 
\mathbb{C}\{ x^{n-3}v, \cdots, x^{i-1}v \} \subset F_{n-i} \subset \text{ker}(x) \oplus \mathbb{C}\{ x^{n-4}v, \cdots, x^{i-2}v \}. 
$$ All other vector spaces are determined uniquely, and it follows that $\overline{C_n^{i,j}} \cap \overline{C_n^{i,j+1}} \simeq \mathbb{P}^1$; similarly $\overline{C_n^{i-1,j}} \cap \overline{C_n^{i,j}} \simeq \mathbb{P}^1$. Using the same method, one verifies that both intersections $\overline{C_n^{i,j}} \cap \overline{C_n^{i+1,j+1}}$ and $\overline{C_n^{i,j}} \cap \overline{C_n^{i-1,j+1}}$ consist of a single point. \end{proof}

\section{Further directions}

\subsection{Canonical bases in irreducible Weyl group representations} Given a bipartition $(\mu, \nu) \in \mathcal{Q}_n$, the top homology group $H_{\text{top}}(\mathcal{C}_{(v,x)})$ carries an action of the Weyl group $W(C_n) = \Z_2\wr S_n$, and realises the irreducible representation indexed by that bipartition. This follows from Kato's results in \cite{kat} and \cite{kato2}. Now the classes of the irreducible components give a distinguished basis in the Weyl group representation. It would be interesting to study this basis, and determine whether it has properties that resemble the canonical bases introduced by Kazhdan and Lusztig in various contexts. In type A, a version of this question was first posed in \cite{KL80} and subsequently restated in terms of the irreducibility of certain characteristic cycles by Kashiwara-Tanisaki (\cite{KT84}). Subsequent work by Kashiwara-Saito (\cite{KS97}) and Williamson (\cite{Wil15}) found counterexamples to the irreducibility conjectures. In the papers mentioned above, Kato also shows that the total homology $H_{\bullet}(\mathcal{C}_{(v,x)})$ realises the standard modules for certain multi-parameter Hecke algebras. It would be interesting to see if one can deduce any additional information about these modules using our results about the structure of exotic Springer fibres. This question is not well-understood in type A, and answering it would require new techniques. 


\def\cprime{$'$} \newcommand{\arxiv}[1]{\href{http://arxiv.org/abs/#1}{\tt
  arXiv:\nolinkurl{#1}}}

\end{document}